\documentclass[a4paper,11pt]{article}

\usepackage{amssymb}
\usepackage{amstext}
\usepackage{amscd}
\usepackage{makeidx}
\usepackage{amsmath,amsthm}
\usepackage[dvips]{graphicx}
\usepackage{fancyhdr}
\usepackage{fancybox}
\usepackage[latin1]{inputenc}

\setlength{\textheight}{22cm} \setlength{\textwidth}{17cm}
\setlength{\oddsidemargin}{-0.7cm}
 \setlength{\headwidth}{16.5cm}

\newcommand{\e}{\varepsilon}
\newcommand{\Ep}{\mathcal{E}}
\newcommand{\N}{\mathbb{N}}
\newcommand{\Z}{\mathbb{Z}}

\newcommand{\R}{\mathbb{R}}

\newcommand{\E}{\mathbb{E}}

\newcommand{\1}{1\! \mathrm{l}}

\newcommand{\y}{\mathrm{y}} 
\newcommand{\te}{\tilde{\varepsilon}}

\newcommand{\Pro}{\mathbb{P}}
\newcommand{\Hc}{\mathcal{H}}

\def\limiteN{\renewcommand{\arraystretch}{0.5}
\begin{array}[t]{c}\xrightarrow{\; \qquad \; }\\
{\scriptstyle N \rightarrow \infty} \end{array}
\renewcommand{\arraystretch}{1}}

\def\limiteas{\renewcommand{\arraystretch}{0.5}
\begin{array}[t]{c}\xrightarrow{\; \nu-a.s. \;}\\
{\scriptstyle N\rightarrow \infty} \end{array}
\renewcommand{\arraystretch}{1}}

\newtheorem{theorem}{Theorem}
\newtheorem{definition}{Definition}

\newtheorem{remark}{Remark}
\newtheorem{lemma}{Lemma}

\newtheorem{example}{Example}
\pagestyle{fancyplain}
\fancyhead[RE]{Aggregation of weakly dependent doubly
stochastic processes} \fancyhead[LE]{\thepage} 
\fancyhead[LO]{L. J. Ferm\'{\i}n} \fancyhead[RO]{\thepage}

\cfoot{}


\begin{document}

\title{Aggregation of weakly dependent doubly stochastic
processes}

\author{Lisandro J. Ferm\'{\i}n\\ {\scriptsize Universit\'e Paris-Sud \
}}  \date{} \maketitle

\abstract{The aim of this paper is to extend the aggregation convergence results
given in \cite{Dacunha&Fermin.L.Notes, Dacunha&Fermin.aggreglinear} to doubly stochastic linear and nonlinear processes with weakly dependent innovations. First, we
introduce a weak dependence notion for doubly stochastic processes,
based in the weak dependence definition given by Doukhan and
Louhichi, \cite{Doukhan&Louhichi}, and we exhibe several models satisfying this
notion, such as: doubly stochastic Volterra processes and doubly
stochastic Bernoulli scheme with weakly dependent innovations.
Afterwards we derive a central limit theorem for the partial
aggregation sequence considering weakly dependent doubly stochastic
processes. Finally, show a new SLLN for the covariance function of
the partial aggregation process in the case of doubly stochastic
Volterra processes with interactive innovations.
\\
Keywords: Aggregation, weak dependence, doubly stochastic processes,
Volterra processes, Bernoulli shift, TCL, SLLN.}

\thispagestyle{empty}
\section{Introduction}

\hspace{0.4cm} The aggregation of doubly stochastic processes with interactive
innovations has been little studied, because dependent innovations
induce a dependency structure for the doubly stochastic
processes which can be difficult to study, especially in the case of
nonlinear processes.

In the literature for aggregation of linear process, see
\cite{Goncalvez1988, Granger, Linden, Lippi&Zaffaroni, Terence1},
usually introduces interaction between individual innovations
$\e^i_t$ considering that this can be decomposed into a common
innovation plus an independent innovation, i.e. $\e^i_t= u_t+
\xi^i_t$.  So, under the presence of common innovation, the
aggregation process keeps only the structure given by the common
component, whereas the aggregation of independent components are
washed out by aggregation, i.e. the convergence results obtained are
the same that considering only the common innovation component.

The immediate question is: it is possible to consider another kind
of interactive innovation that allows to obtain different results to
the cases of independent innovations and common innovation? We
answer this question affirmatively, in \cite{Dacunha&Fermin.L.Notes}
we have studied the aggregation of doubly stochastic gaussian
processes and we have approached the problem in a different way. We
introduce interaction between elementary processes ``living at $i$"
starting from interaction between innovations as
$\mathbb{E}[\varepsilon_t^i\varepsilon_t^j]=\chi(i-j)$, where $\chi$
is a  given  covariance function. Thus, the common innovation case
is given by $\chi(j)=1$, for all $j$, and the independent
innovations case by $\chi(j)=0$, for $j\neq 0$ and $\chi(0)=1$. The
procedure includes naturally the aforementioned case. Hence we
obtain some interesting qualitative behavior of the aggregation process

Some specific results already exist for aggregation of nonlinear
processes. Several authors have investigated the aggregation of $ARCH$ and
$GARCH$ processes, see \cite{Ding&Granger, Kazakevi, Leipus&Viano2002,
Nijman&Sentana, Zaffaroni}. All these works have been
developed considering exclusively common innovation or independent
innovations. As far as we know does not exist literature about the
aggregation of nonlinear processes with interactive innovations.

Our purpose is to extend the convergence results of
the aggregation procedure, given in \cite{Dacunha&Fermin.L.Notes}, for gaussian elementary processes with interactive innovations to case of nongaussian processes with
weakly dependent innovations, our results include elementary doubly
stochastic processes such as: nongaussian linear processes,
nonlinear Bernoulli shifts, Volterra processes, $ARCH$, $GARCH$,
$LARCH$ and bilinear models.

We consider a sequence of stationary doubly stochastic processes $Z=\{Z^i: \, i \in \mathbb{N}\}$  on $(\Omega, \mathcal{A}, P) \times (\Omega', \mathcal{A}', P')$ in $\mathbb{R}^{\Z}$ by
\begin{equation}\label{ec.6.1.1}
 Z^i= \left\{ Z^i_t \left( \y^i(\omega), \e^i(\omega') \right): \, t \in \Z \right\}.
\end{equation}
where $Y=\{\y^i: \, i \in \mathbb{Z}\}$  a sequence of random
variables defined on $(\Omega, \mathcal{A}, P)$, with distribution
$\mu$ on $\mathbb{R}^s$ and $\e = \{\varepsilon^i_t: \, i, \,t \in
\mathbb{Z} \}$ be a doubly indexed process defined on $(\Omega',
\mathcal{A}', P')$ satisfying the following assumption
\quad\\
\textbf{Assumption A1:}
\begin{enumerate}

  \item  \textit{$\Ep$ is an array of strong white noises, i.e. for each $i$, $\e^i=\{\e^i_t\}$ is an i.i.d sequence.} 

  \item \textit{$Y$ is an i.i.d sequence with distribution $\nu= \mu^{\bigotimes \N}$}.

  \item  \textit{$Y$ is independent of $\Ep$.}
\end{enumerate}

Here, the sequence $Y$ is considered as the random environment model and $\Ep= \{\e^i: \, i\in \Z\}$ is the sequence of innovations $\e^i= \{\e^i_t: \, t \in \Z\}$. For simplicity, throughout the work $Z^i_t:=Z_t(\y^i, \e^i)$, and we use $Z_t(\y^i, \e^i)$ only if we want to state explicitly the dependence on $\y^i$ and $\e^i$.

For every trajectory fixed $Y$, we define $X^N(Y)=\{X^N_t(Y):\, t \in \Z\}$ as the partial aggregation of the elementary processes $\{Z^i\}$ by
\begin{equation}\label{ec.6.1.2}
X^N_t(Y) = \frac{1}{B_N}\sum_{i=1}^N Z_t(\y^i, \e^i),
\end{equation}
where $B_N$ is an appropriate normalization sequence.

We study, for almost every fixed trajectory $Y$, the convergence of $X^N(Y)$ to some process $X$, called the aggregation process.

We introduce a new notion of weak dependence, for a sequence of doubly stochastic processes. This notion is based in the weak dependence definition given by Doukhan and Louhichi, see \cite{Doukhan&Louhichi}.

We give several models of doubly stochastic processes satisfying the new notion of weak dependence. To do that we consider innovations defined in such a form that, for each $t$, $\{\varepsilon^i_t: \, i \in \mathbb{Z}\}$ is a weakly dependent stationary sequence in the index $i$, see \cite{WDB-PDoukhan}. The notion of weak dependence for the innovations sequence $\{\e^i\}$ makes explicit the asymptotic independence between $(\e^{i_1}_{t_1}, \ldots, \e^{i_u}_{t_u})$ and $(\e^{j_1}_{t'_1}, \ldots, \e^{j_v}_{t'_v})$ when $i_1< \ldots i_u < i_u+ r \leq j_1< \ldots <j_v$ and $r$ tends to infinity.

Considering several types of weak dependence already used in the literature, we show that the weak dependence property of innovations is transferred to the sequence $\{Z^i\}$ of doubly stochastic elementary processes. In fact we will prove that $Z=\{Z^i\}$ satisfies the new notion of weak dependence. This result will be  prove for orthogonal expansion Volterra processes such as: linear processes, $ARCH$, $GARCH$, $LARCH$ and  bilinear models. We will also show this transference property for doubly stochastic uniform Lipchitz Bernoulli shift processes. For instance, Lipchitz functions of linear processes.

Let us suppose that elementary process $Z^i$ satisfies the following moment
condition
\begin{description}
\item[K$2_{\delta}$] \qquad $\mathbb{E}[|Z^i_t|^{2+\delta}]< \infty$, with $\delta>0$.
\end{description}
This conditions yields the existence of $Z^i$ in $L^2(\Omega \times
\Omega')$. We prove that under the weak dependence property, the
interaction $\chi$, defined by $\E[\e^i_t \e^j_t]= \chi(i-j)$, is
always a weak interaction in $\ell^1$. So $\chi$ has a finite limit in the Cesaro sense.

In general, we assume that the covariance function $\Gamma^N(Y)$ of partial aggregation process
$X^N(Y)$  satisfies
\begin{description}
\item[K4] \qquad $\Gamma^N(\tau,Y)$ converges $\nu-a.s.$ to $\Gamma(\tau)$, for all $\tau \in \mathbb{Z}$.
\end{description}

Thus, assuming that the doubly stochastic processes satisfy this new
notion of weak dependence and moreover the conditions K$2_{\delta}$
and K4, we prove a Central Limit Theorem $\nu-a.s.$ for the partial
aggregation $X^N(Y)$. This proof is done directly on the structure
of elementary doubly stochastic processes and not as in \cite{Dacunha&Fermin.aggreglinear},
for the aggregation of doubly stochastic linear process, which is
done on the structure of innovations.  This result includes some cases that have not been addressed in the literature. We can cite to illustrate the cases of nonlinear processes with weakly dependent innovations, such as general doubly stochastic orthogonal expansion Volterra processes and doubly stochastic uniform Lipchitz Bernoulli shift processes.

Finally, we show a new SLLN for the covariance function $\Gamma^N(Y)$ in the case of doubly stochastic orthogonal expansion Volterra processes, $DSV^*$ processes, with interactive innovations.

This paper is organized as follows. In Section~\ref{sect.6.1} we present some doubly stochastic Bernoulli shift processes and we give the necessary and sufficient condition to obtain their existence in $L^2(\Omega\times \Omega')$.  In Section~\ref{sect.6.2}
we introduce the notion of weak dependence for doubly stochastic
processes. In Section~\ref{sect.6.3} we present several examples
of weakly dependent innovations and we show that considering
different weakly dependent innovation models we can obtain weakly
dependent doubly stochastic models. In
Section~\ref{sect.6.4} we give the CLT $\nu-a.s.$ for
general sequence of weakly dependent doubly stochastic processes. In
Section~\ref{sect.6.5} we give a SLLN for $\Gamma^N$ in the
case of $DSV^*$ processes. Proofs are given in Section~\ref{sect.6.6}: we
prove the results relating to the transfer of weak dependence
property to $Z$, we develop the standard Linderberg method with
Bernstein's block and yield the CLT $\nu-a.s.$. Finally, we prove
the SLLN for $\Gamma^N(Y)$.

\section{Some doubly stochastic Bernoulli shift processes \label{sect.6.1}}

\hspace{0.4cm} In this section, we describe doubly stochastic versions of several
models used in statistics, econometrics and finance. We consider some stationary doubly stochastic elementary processes generated by a random parameter $\y$ defined on $(\Omega, \mathcal{A}, \Pro)$ with distribution $\mu$ on $\mathbb{R}^s$, and a strong white noise $\e= \{\e_t : \, t \in \Z \}$ on $(\Omega', \mathcal{A}', \Pro')$; i.e. doubly stochastic processes defined on $(\Omega, \mathcal{A}, \Pro) \times (\Omega', \mathcal{A}', \Pro')$ in $\mathbb{R}^{\Z}$ of the form \eqref{ec.6.1.1}

We introduce the doubly stochastic Bernoulli shifts, $DSBS$, a very broad class
of models that contains the major part of processes derived from a
stationary sequence. We define the $DSBS$ process in the following way, see
\cite{WDB-PDoukhan}.

\begin{definition}\label{def.4.2.1}
Let $\Hc : \R^s \times \R^{\Z} \rightarrow \R$ be a measurable
function. We define a $DSBS$ process $Z(\y, \e)$ with random
parameter $\y$ in $\R^s$ and innovation $\e= \{\e_k : \, k \in \Z \}$
by
\begin{equation}\label{ec.4.2.2}
Z_t(\y, \e)= \Hc \left(\y, \{\varepsilon_{t-k}\}_{ k\in \mathbb{Z}}
\right).
\end{equation}
\end{definition}

Since $\Hc$ is a function depending on an infinite number of
arguments, it is generally given in term of a series defined in
$L^2(\Omega \bigotimes \Omega')$. In order to define \eqref{ec.4.2.2} in a general setting, we denote for any subset $J \subset \Z$:
$$\Hc_{J}:= \Hc \left(\y,\{\e_j\}_{j \in J}\right) = \Hc\left(\y,
\{\e_j \1_{J}(j) \}_{j \in \Z}\right).$$

For finite subsets $J$ this expression is generally well defined and
simple to handle. In order to define such model in $L^2(\Omega
\bigotimes \Omega')$ we assume that the function $\Hc$ is such that
\begin{equation}\label{ec.4.2.3}
\E[|\Hc \left(\y, \e \right)|^2] < \infty.
\end{equation}
The $DSBS$ process $Z(\y,\, \e)= \{Z_t(\y,\, \e)\}$ is  stationary,
$\mu-a.s.$ Then, condition~\eqref{ec.4.2.3} implies that $Z(\y,\, \e)$ is on $L^2(\Omega \bigotimes \Omega')$, which implies its existence, $\mu-a.s.$, in $L^2(\Omega')$.

As we will see in the following, most of the usually used
stochastic processes can be represented as Bernoulli shifts.

As an example we consider doubly stochastic uniform Lipschitz Bernoulli shift ($DSULBS$) processes. We also study doubly stochastic Volterra processes, which correspond to random parameters Volterra processes and we detail the doubly stochastic bilinear models given afterwards their expression as a Volterra's expansion. Finally, we present several well known doubly stochastic bilinear models
such as $LARCH$, $GARCH$ and $ARCH$ processes with random coefficients. 

We will discuss the necessary and sufficient conditions for the existence $a.s.$ in  $L^2(\Omega\bigotimes\Omega')$ of these models. Those conditions will be very important for studying the convergence of the aggregation procedure.

\subsection{Doubly stochastic uniform Lipschitz Bernoulli shifts \label{sect.DSULBS}}

\hspace{0.4cm} Let $\Hc: \R^s \times \R^{\Z} \rightarrow \R$ be a function such
that if $\e, \e' \in \R^{\Z}$ coincide for all indexes but one, let
say $k_0 \in \Z$, then there exists $a_{k_0}(\y)$ $\mu-a.s.$ such that
\begin{equation}\label{ec.4.2.5}
\left| \mathcal{H}\left(\y, \{\e_{k}\}_{ k\in \mathbb{Z}} \right) -
\mathcal{H}\left(\y, \{\e'_{k}\}_{ k\in \mathbb{Z}} \right)\right|
\leq |a_{k_0}(\y)| |\e_{k_0} - \e'_{k_0}|.
\end{equation}
Thus,  there exists a sequence $a(\y)= \{a_k(\y): \, k \in \mathbb{Z}\}$, such that $\Hc$ satisfies $\mu-a.s.$,  for all $\e, \e' \in \R^{\Z}$, the following regularity condition
\begin{equation}\label{ec.4.2.6}
\left| \mathcal{H}\left(\y, \{\e_{k}\}_{ k\in \mathbb{Z}} \right) -
\mathcal{H}\left(\y, \{\e'_{k}\}_{ k\in \mathbb{Z}} \right) \right|
\leq \sum_{k\in \mathbb{Z}} |a_k(\y)| |\e_k - \e'_k|.
\end{equation}
We say that $\Hc$ is a uniform Lipschitz Bernoulli shift whit respect the coordinates $\e$.

Moreover supposing the following
\begin{equation}\label{ec.4.2.7}
\int \|a(\y)\|_{\ell_1} \mu(dy) < \infty,
\end{equation}
where $\|a(y)\|_{\ell_1}= \sum_k |a_k(y)|$. Condition~\eqref{ec.4.2.7} implies that $a(\y) \in \ell_1$, $\mu-a.s.$

Then, $Z(\y, \e)=\{Z_t(\y, \e): \, t \in \Z\}$ given by
$$Z_t= \mathcal{H}\left(\y^i, \{\varepsilon^i_{t-k}\}_{ k\in \mathbb{Z}} \right).$$
is defined as a doubly stochastic uniform Lipschitz Bernoulli shifts ($DSULBS$)
process.

Simple examples of this situation is Lipschitz function of linear processes. Another example
of this situation is the following stationary doubly stochastic process
$$Z_t(\y, \e)=\e_{t} \left(c_0(\y) + \sum_{k\neq 0} c_k(\y) \e_{t-k} \right),$$
where the innovation $\e_t$ is bounded and $c(\y)= \{c_k(\y): \, k \in \Z\}$ is such that $\E[\|c(\y)\|_{\ell_1}]< \infty$. In this case,
condition~\eqref{ec.4.2.5} also holds with $|a_k(\y)| = 2
\|\e_0\|^2_{\infty} |c_k(\y)|$.


\subsection{Doubly stochastic Volterra processes \label{sect.DSVP}}

\hspace{0.4cm} A doubly stochastic Volterra (DSV) process is defined through a convergent Volterra expansion with random coefficients
\begin{equation}\label{ec.4.3.1}
Z_t(\y, \e)= \sum_{k=0}^{\infty} \sum_{l_1,\ldots,l_k \in \Z^k}
c_{k:l_1,\ldots,l_k}(\y) \varepsilon_{t-l_1}\cdots\varepsilon_{t-l_k},
\end{equation}
where $\e$ is a sequence of innovation and $c(\y) = \{c_{k:l_1,\ldots,l_k}(\y): \, (l_1,\ldots,l_k) \in \mathbb{Z}^k, \,  k \in \mathbb{Z}\}$ is a sequence of random
coefficients.

We assume that $\e$ is a strong white noise such that, for all $k \in \N$, $\E[|\e_t|^{2k}]<\infty$. Let us note  $\ell_{\e}$ the space of sequence $c=\{c_{k:l_1,\ldots,l_k}: \, (l_1,\ldots,l_k) \in \Z^k, \, k \in \Z\}$ such that
\begin{equation}\label{ec.4.3.2}
\|c\|^2_{\ell_{\e}}=\sum_{k=0}^{\infty}
\mathbb{E}[|\varepsilon_t|^{2k}]\sum_{l_1,\ldots,l_k}
|c_{k:l_1,\ldots,l_k}|^2 < \infty.
\end{equation}

If $c(\y)$ satisfies condition~\eqref{ec.4.3.2} $\mu-a.s.$, this entails the $\mu-a.s.$ convergence in $L^2(\Omega')$ of the series \eqref{ec.4.3.1}. Since $\e$ is a strong white noise  a sufficient condition for the convergence in $L^2(\Omega \times \Omega')$ of the series \eqref{ec.4.3.1} is
\begin{description}
\item[C2] \qquad $\E[\|c(\y)\|^2_{\ell_{\e}}]< \infty$.
\end{description}

In order to obtain condition~C2 it is necessary the existence of all moments of
$\e$, hence is sufficient the existence of Laplace transform of distribution $\mathbb{P}_{\e}$. This holds, for instance, if $\e$ is a sub-gaussian process.

\begin{remark}
We note that the $DSV$ process correspond to a $DSBS$ process in the case of chaotic expansion associated with the discrete chaos generated by the innovation $\e$, \cite{Schreiber, Neveu}. We can consider that, for each $\y$ fixed, $\Hc(\y,\cdot)$ is expanded, in $L^2$ sense, into this chaos as
$$\Hc(\y, \e)= \sum_{k=0}^{\infty} \Hc^{(k)}(\y, \e),$$
where $\Hc^{(k)}(\y, \e)$ denotes the $k$-th order chaotic
contribution, $\Hc^{(0)}= c_{0}(\y)$ and
$$\Hc^{(k)}(\y, \e)= \sum_{l_1, \ldots, l_k \in \Z^k} c_{k: l_1, \ldots, l_k}(\y) \e_{l_1}\ldots \e_{l_k}.$$
The process $\Hc^{(k)}(\y, \{\e_{t-j}\}_{j\in \Z})$ is called the
$DSV$ process of order $k$.
\end{remark}

The most simple $DSV$ process are the linear processes with random
coefficients, defined by
\begin{equation} \label{ec.4.3.3}
Z_{t}(\y,\e)=\sum_{k\in \mathbb{Z}} c_k(\y)\varepsilon_{t-k}.
\end{equation}
These linear processes are $DSV$ processes of order one.

An interesting particular case of Volterra processes is obtained when the chaotic expansion is given by
\begin{equation}\label{ec.4.3.4}
Z_t(\y, \e)=  \sum_{k=0}^{\infty} \sum_{l_1<\ldots<l_k} c_{k:l_1,\ldots,l_k}(\y) \e_{t-l_1}\cdots\e_{t-l_k}.
\end{equation}

The corresponding $k$-th order Volterra processes are pairwise orthogonal. In the following, we will referee this type of doubly stochastic Volterra processes with orthogonal expansion by $DSV^*$ processes. Since $Z_t(\y, \e)$ is a sum of orthogonal terms, if the innovations satisfies $\E[|\e_t|^2]=1$, the series \eqref{ec.4.3.4} is $\mu-a.s.$ convergent in $L^2(\Omega')$ if and only if
\begin{equation}\label{ec.4.3.5}
\|c(\y)\|^2_2= \sum_{k=0}^{\infty} \sum_{l_1<\ldots<l_k}
|c_{k:l_1,\ldots,l_k}(\y)|^2 < \infty,  \qquad \mu-a.s.
\end{equation}
In this case, we put $\ell_{\epsilon}= \ell_2$, the space of sequence $c$
such that $\|c\|_{\ell_{\e}}= \|c\|_{2} < \infty$. Then, we have
that $Z(\y, \e)$ belongs to $L^2(\Omega \bigotimes \Omega')$ if and only if  condition~C2 is satisfied.

The difference with the general case is that the indexes in the
product are all different. So that, in the general case, we do not
get the orthogonality property, which is a disadvantage, see \cite{Doukhan.LRM}.

Nevertheless, the i.i.d sequence $\e=\{\e_t: \, t \in \Z\}$ can be replaced by a vector-valued i.i.d sequence $\left\{\left(\e^{(k,1)}_t, \ldots, \e^{(k,k)}_t \right): \, t \in \Z \right\}$, such that $\left\{\e^{(k,l)}: l\leq k \right\}$ are mutually orthogonal. This is obtained by replacing each power of an innovation variable with its decomposition on the Appell polynomials of the distribution of $\e_0$, see e.g. \cite{Doukhan.LRM}.

Then we can rewrite the process, under condition~C2, as a sum of orthogonal terms given by

\begin{equation}\label{ec.4.3.6}
Z_t(\y, \e)=  \sum_{k=0}^{\infty} \sum_{l_1<\ldots<l_k}
c_{k:l_1,\ldots,l_k}(\y) \e^{i,(k,1)}_{t-l_1}\cdots
\e^{i,(k,k)}_{t-l_k}.
\end{equation}

In the following section, we present some examples of $DSV^*$ processes. These models are
a natural extension of models that are well known in the literature and have many sorts of applications.


\subsection{Doubly stochastic bilinear models \label{sect.DSBM}}

\hspace{0.4cm} A vast literature is devoted in order to study the conditionally
heteroscedastic models. The heteroscedasticity property is characterized by the fact
that the conditional variance $var(Z_t | I_{t-1})$, given an information set $I_{t-1}$, is not a constant.

The bilinear models allows unify the treatment of heteroscedastic models. These models are defined by the relation
\begin{equation}\label{ec.4.4.1}
Z_t(\y, \e)= \left(a_0(\y) + \sum_{k=1}^{\infty} a_k(\y) Z_{t-k}\right)
+ \left(b_0(\y) + \sum_{k=1}^{\infty} b_k(\y) Z_{t-k}\right)\e_t.
\end{equation}
where $\e=\{\e_t\}$ is a sequence of  i.i.d. centered random variables such that $\E[|\e_t|^2]=1$, and $a(\y)= \{a_k(\y)\}$, $b(\y)=\{b_k(\y)\}$  are real random coefficients, not necessary nonnegative.

This models appears naturally when studying the class of processes with the property that the conditional mean $m_t(\y)=\E^\y[Z_t| Z_k,\, k < t]$  is a linear combination of $Z_k$, $k<t$, and the conditional variance $\sigma_t^2(\y)= var(Z_t|Z_k,\, k < t)$ is the square of a linear combinations of $Z_k$, for $k<t$, as it is in the case of \eqref{ec.4.4.1}
$$\sigma^2_t(\y)= \left(b_0(\y) + \sum_{k=1}^{\infty} b_k(\y) Z_{t-k}\right)^2 \quad \text{and} \quad m_t(\y)= \left(a_0(\y) + \sum_{k=1}^{\infty} a_k(\y) Z_{t-k}\right).$$
Let us take
\begin{equation*}
\begin{array}{llllllcll}
a(s, \y) & = & \sum_{k=1}^{\infty} a_k(\y) s^k, & \quad & g(s,\y)
& = & (1-b(s,\y))^{-1}& = & \sum_{k=0}^{\infty} g_k(\y) s^k , \\
b(s, \y) & = & \sum_{k=1}^{\infty} b_k(\y) s^k, & \quad & h(s,\y) & = &
a(s,\y)(1-b(s,\y))^{-1} & = & \sum_{k=0}^{\infty} h_k(\y) s^k .
\end{array}
\end{equation*}
If we suppose that $H(\y)= \sum_{l=1}^{\infty} h^2_l(\y)< 1$ $\mu-a.s.$,
then there is, $\mu-a.s.$, a unique second order stationary solution
given by
\begin{equation}\label{ec.4.4.2}
Z_t( \y, \e) = b_0(\y) \sum_{k=1}^{\infty} \sum_{0\leq l_1< \ldots<
l_k} g_{l_1}(\y)h_{l_2-l_1}(\y)\ldots h_{l_{k}-l_{k-1}}(\y)
 \e_{t-l_1}\ldots \e_{t-l_k}.
\end{equation}
Then, $Z(\y, \e)$ is a $DSV^*$ process with random coefficients
$$c_0(\y)=b_0(\y) \quad \text{and } \quad c_{k: l_1, \ldots l_k}(\y) =  g_{l_1}(\y)h_{l_2-l_1}(\y)\ldots h_{l_{k}-l_{k-1}}(\y).$$

We present the necessary and sufficient conditions for the
existence in $L^2$ of process $Z(\y, \e)$ defined by
\eqref{ec.4.4.1}, see \cite{Giraitis&Surgailis}.

Let $G(\y)= \sum_{l=0}^{\infty} g^2_l(\y)$. If $b_0(\y)< \infty$ and
$H(\y)<1$ $\mu-a.s.$ we obtain
\begin{equation}\label{ec.4.4.2bis}
\|c(\y)\|_2^2 = b^2_0(\y) \sum_{k=1}^{\infty} \sum_{0\leq l_1<
\ldots
< l_k } g^2_{l_1}(\y)h^2_{l_2 - l_{1}}(\y) \ldots h^2_{l_k-l_{k-1}}(\y) =\frac{b_0^2(\y)G(\y)}{1-H(\y)}.
\end{equation}
Then, from \eqref{ec.4.4.2bis}, we obtain that the following conditions
\begin{eqnarray}\label{ec.4.4.3}
H(\y) < 1  \quad \mu-a.s. \quad \text{ and } \quad \E\left[\frac{b_0^2(\y)G(\y)}{(1-H(\y))}\right] < \infty.
\end{eqnarray}
are necessary and sufficient  for the existence of a stationary
$L^2$-solution to \eqref{ec.4.4.1}. This solution $Z(\y,\e)$ is
given by \eqref{ec.4.4.2}.

Formally the classes $AR$, $ARMA$, $ARCH$, $GARCH$, $LARCH$ with random coefficients all belong to the class of doubly stochastic bilinear models. In the follows we details the necessary and sufficient condition for the existence of $L^2$-solutions for some of those classes.

\subsubsection{Doubly stochastic $LARCH(\infty)$ processes}

\hspace{0.4cm} We consider the doubly stochastic $LARCH(\infty)$ models given by
\begin{equation}\label{ec.4.4.4}
Z_t= \left(b_0(\y) + \sum_{k=1}^{\infty} b_k(\y) Z_{t-k}\right)\e_t.
\end{equation}
with real value random coefficients, not necessary nonnegatives, and $\e=\{\e_t\}$ a white noise. The $LARCH$ model with deterministic coefficients has been introduced by Robinson, see \cite{RobinsonARCH}. For a vectorial version of this model, see \cite{Doukhan.et.al2006}.

Let us denote $B_2(\y)= \sum_{k=1}^{\infty} b_k^2(\y)$ and  $ \E[|\e_t|^2]=1$. Following the same way that for bilinear models we obtain that conditions
\begin{equation}\label{ec.4.4.5}
B_2(\y) < 1  \quad  \mu-a.s. \quad \text{and }\quad
\E\left[\frac{b_0^2(\y)}{1-B_2(\y)}\right] < \infty
\end{equation}
are necessary  and sufficient for the existence of a stationary
$L^2$-solution to \eqref{ec.4.4.4}.

The stationary solution of equation~\eqref{ec.4.4.4} has a
orthogonal Volterra expansion given by equation
\begin{equation}\label{ec.4.4.6}
Z_t(\y, \e)= \sum_{k=0}^{\infty} \sum_{0<l_1<, \ldots,< l_k} b_0(\y)
b_{l_1}(\y)b_{l_1-l_2}(\y)\ldots b_{l_k-l_{k-1}}(\y) \e_t
\e_{t-l_1}\ldots \e_{t-l_k}.
\end{equation}

\subsubsection{Doubly stochastic $ARCH(\infty)$ processes}

\hspace{0.4cm} Here we consider the random coefficients nonnegative $ARCH$ model
defined as follow. A process $Z(\y,\e)=\{Z_t(\y,\e) :\, t \in \Z\}$ is
said to satisfy random coefficient $ARCH(\infty)$ equations if there
exist a nonnegative i.i.d. innovation sequence $\e= \{\e_t: \, t\in
\Z \}$ and nonnegative random variables $\{b_k(\y)\}$ independent of
$\e$ such that
\begin{equation}\label{ec.4.4.7}
Z_t=\left(b_0(\y) + \sum_{k=1}^{\infty} b_k(\y) Z_{t-k}\right)\e_t
\quad \text{a.s.}
\end{equation}
Note that the random coefficient $ARCH$ process given by
equation~\eqref{ec.4.4.7} is a nonergodic process.

Classically, $ARCH$ process we mean the model where the returns
$r_t$ admit a representation of the form
\begin{equation}\label{ec.4.4.8}
r_t= \sigma_t \xi_t, \qquad \sigma^2_t = b_0 +\sum_{k=1}^{\infty}
b_k r_{t-k}^2.
\end{equation}
where $\xi=\{\xi_t\}$ is a sequence of i.i.d random
variables with zero mean and finite variance, and $\sigma^2_t$ is a
linear combination of the squares of past returns. The $GARCH(p,q)$
model is defined by
\begin{equation}\label{ec.4.4.9}
r_t= \sigma_t \xi_t, \qquad \sigma^2_t = \alpha_0 +
\sum_{k=1}^{p}\beta_k \sigma^2_{t-k} + \sum_{k=1}^{q} \alpha_k
r_{t-k}^2.
\end{equation}
Under some restrictions on the polynomials $\alpha(z)= \sum_{k=1}^{q}
\alpha_k z^k$ and $\beta(z)=\sum_{k=1}^{p}\beta_k z^k$, the model
can be rewritten in the form \eqref{ec.4.4.8}.

Then, denoting $Z_t= r_t^2$ and  $\e_t= \xi_t^2$, we represent this
model in the $ARCH$ form \eqref{ec.4.4.7}, see \cite{RobinsonARCH}.

In the case of nonrandom coefficients, those models are introduced by
\cite{RobinsonARCH} and subsequently studied in \cite{Giraitis, Kazakevi&Leipus, Kokoszka&Leipus}. For the case of random coefficients see \cite{Kazakevi&Leipus, Leipus&Viano2000}.

Let $\lambda_1= \E[\e_t]$, $\lambda_2= \E[\e_t^2]$ and $B(\y)=
\sum_{k=1}^{\infty}b_k(\y)$. The recursion relation \eqref{ec.4.4.7}
yields the following Volterra series expansion of $Z(\y,\e)$:
\begin{equation}\label{ec.4.4.10}
Z_t(\y, \e)= \sum_{k=0}^{\infty} \sum_{0< l_1<\ldots <\l_k} b_0(\y)
b_{l_1}(\y)\ldots b_{l_k -l_{k-1}}(\y)  \e_t \e_{t-l_1} \ldots
\e_{t-l_k}.
\end{equation}

In order to obtain the necessary and sufficient conditions for the existence in $L^2$ of process $Z(\y, \e)$ defined by \eqref{ec.4.4.10}, one needs
to study orthogonal Volterra representation of $Z(\y, \e)$. This
orthogonal representation is obtained by replacing the $\e_t$'s by
$\e_t=\lambda_1(\kappa \te_t +1)$, where $\kappa^2=
(\lambda_2-\lambda_1^2)/\lambda_1^2$ and $\te_t$ have zero mean and
unit variance. We can rewrite \eqref{ec.4.4.7} of the bilinear form, see \cite{Giraitis&Surgailis},
\begin{eqnarray*}
Z_t(\y,\e)&=& \left(\lambda_1 b_0(\y) + \sum_{k=1}^{\infty} \lambda_1b_k(\y) Z_{t-k}\right) +\left(\kappa\lambda_1 b_0(\y) + \sum_{k=1}^{\infty} \kappa\lambda_1 b_k(\y) Z_{t-k}\right)\te_t.
\end{eqnarray*}

Then, similarly to case of bilinear models, we obtain that $Z(\y, \e)$ the following $DSV^*$ representation where
\begin{eqnarray*}\label{ec.4.4.15}
Z_t(\y,\e)&=& \frac{\lambda_1 b_0(\y) }{1-\lambda_1B(\y)}\left( 1 +
\sum_{k=1}^{\infty} \sum_{0\leq l_1< \ldots < l_{k} }
h_{l_1}(\y)h_{l_2-l_{1}}\ldots h_{l_k-l_{k-1}}(\y)\te_{t-l_1}\ldots
\te_{t-l_k}\right),
\end{eqnarray*}
with $g_0=1$,
\begin{equation*}
\sum_{k=0}^{\infty} g_k(\y) s^k = \left(1-\lambda_1\sum_{k=1}^{\infty} b_k(\y)s^k \right)^{-1},
\end{equation*}
and $h_k(\y)= \kappa g_k(\y)$, for $k\geq1$. More explicitly,
\begin{equation*}
g_l(\y)= \sum_{k=1}^l\lambda_1^k \sum_{0< j_1< \ldots <
j_{k-1}< l } b_{j_1}(\y)b_{j_2-j_1}\ldots b_{l -j_{k-1}}(\y).
\end{equation*}

Let $H(\y)= \sum_{l=1}^{\infty} h^2_l(\y)$. Thus, we obtain that the following conditions
\begin{eqnarray}\label{ec.4.4.16}
\lambda_1 B(\y) < 1,   \quad  H(\y) < 1  \quad \mu-a.s. \\
\E\left[\frac{b_0^2(\y)}{(1-\lambda_1B(\y))^2 (1-H(\y))}\right] <
\infty.
\end{eqnarray}
are necessary and sufficient  for the existence of a stationary
$L^2$-solution to \eqref{ec.4.4.7}.

Now, we present an examples of $ARCH$ models.

\begin{example}[Doubly stochastic $GARCH(1,1)$ processes]

Let us consider the random parameters $GARCH(1,1)$ model given by
\begin{equation*}
r_t(\y)= \sigma_t(\y) \xi_t, \qquad \sigma^2_t(\y) = \alpha_0(\y) +
\alpha(\y) \sigma^2_{t-1}(\y) + \beta(\y) r_{t-1}^2(\y),
\end{equation*}
where $\alpha_0(\y)$, $\alpha(\y)$ and $\beta(\y)$ are nonnegative
random variables independent of $\{\xi_t\}$. We note that the
corresponding $ARCH(\infty)$ equation \eqref{ec.4.4.8} is obtained
taking the random parameters as
$$b_0(\y) = \frac{\alpha_0(\y)}{1-\beta(\y)} \quad \text{ and }  \quad  b_k(\y)= \alpha(\y) \beta^{k-1}(\y).$$
Let $\varrho(\y)= \lambda_1\alpha(\y)+\beta(\y)$. In this case $h_k(\y)=
\lambda_1\kappa\alpha(\y)\varrho(\y)^{k-1}$, then
$$B(\y)=\frac{\alpha(\y)}{1-\beta(\y)} \quad \text{ and }  \quad H(\y)= \frac{\lambda_1^2\kappa^2 \alpha^2(\y)}{1-\varrho^2(\y)}.$$
So, $Z(\y,\e)$ exist in  $L^2$ if and only if
\begin{eqnarray}\label{ec.4.4.17}
\varrho^2(\y)+ \lambda_1^2\kappa^2\alpha^2(\y)< 1 \qquad \mu-a.s. \\
\E\left[\frac{\alpha_0^2(\y)}{(1-\varrho)^2 (1-\varrho^2(\y)-
\lambda_1^2\kappa^2\alpha(\y)^2)}\right] < \infty.
\end{eqnarray}

\end{example}

\begin{example}[Doubly stochastic $ARCH(1)$ processes]

We consider the random parameters $ARCH(1)$ model given by
\begin{equation*}
r_t(\y)= \kappa_t(\y) \xi_t, \qquad \kappa^2_t(\y) = \alpha_0(\y) +
\alpha(\y) \kappa^2_{t-1}(\y),
\end{equation*}
where $\alpha_0(\y)$ and $\alpha(\y)$ are nonnegative random variables
independent of $\{\xi_t\}$. We note that the corresponding $ARCH(1)$
process is a particular case of the $GARCH(1,1)$ process, when we
take the parameter $\beta(\y)=0$. Thus, $b_0(\y) = \alpha_0(\y)$,
$b_1(\y) = \alpha(\y)$ and $b_k(\y)=0$ for $k>1$.
In this case $h_k(\y)= \kappa (\lambda_1\alpha(\y))^{k-1}$. Then
$Z(\y,\e)$ exists in $L^2$ if and only if
\begin{eqnarray}\label{ec.4.4.18}
\sqrt{\lambda_2}\alpha(\y)< 1 \quad \mu-a.s. \quad \\
\E\left[\frac{\alpha_0^2(\y)}{(1-\lambda_1\alpha(\y))^2
(1-\lambda_2\alpha^2(\y))}\right] < \infty.
\end{eqnarray}

\end{example}


\section{Weak dependence for doubly stochastic processes \label{sect.6.2}}

\hspace{0.4cm} In this section we introduce a notion of weak dependence for
sequence of doubly stochastic processes following the definition of
weak dependence given by Doukhan and Louhichi, see \cite{Doukhan&Louhichi}.

Let $\Delta^{(k)}$ be the set of bounded Lipschitz functions $f$
defined on $\mathbb{R}^k$ such that for all $(z_1,\ldots,z_k)$ and
$(z^{'}_1,\ldots,z^{'}_k)$ in $\mathbb{R}^k$
$$|f(z_1,\ldots,z_k) - f(z^{'}_1,\ldots,z^{'}_k)|\leq
Lip(f)\sum_{m=1}^k |z_m - z^{'}_m|.$$
Above we denote the Lipschitz constant of $f$ by $Lip(f)$. Let
$\Delta^{(k)}_1$ be the set of function $f$ in $\Delta^{(k)}$ such
that $\|f\|_{\infty}\leq 1$.

Let $\{Z_i\}$ be a real-valued stationary process.

\begin{definition}[Doukhan and Louhichi] \label{def.6.2.1}
The process $\{ Z_i : \, i \in \mathbb{Z}\}$ is $(\epsilon,
\psi)$-weakly dependent if there exist a sequence ${\epsilon(r)}$
decreasing to zero at infinity and a function $\psi$ from
$\mathbb{N}^2\times (\R^{+})^2$ to $\mathbb{R}^{+}$ such that
$$\left|cov \left(f( Z_{i_1},..., Z_{i_u}),
\, g( Z_{j_1},...,Z_{j_v})\right)\right| \leq
\psi(u,v,Lip(f),Lip(g))\epsilon(r),$$
for any $r\geq 0$ and any $(u+v)$-tuples such that $i_1<\ldots< i_u
\leq i_u + r< j_1 <\ldots<j_v$, where $(f,g) \in \Delta^{(u)}_1
\times \Delta^{(v)}_1$.
\end{definition}

We introduce next $\epsilon$  the dependence coefficient.

\begin{remark}\label{rem.6.1.1}
The $\epsilon$-coefficients depend on the form of function $\psi$.
Let $\mathcal{J}(u,v,r)$ the set of $(u+v)$-tuples such that
$i_1<\ldots< i_u \leq i_u + r< j_1 <\ldots< j_v$. Given a function
$\psi$  from $\mathbb{N}^2\times (\R^{+})^2$ to $\mathbb{R}^{+}$,
the $\epsilon$-coefficient associated to $\psi$ is defined by
$$
\epsilon(r)= \sup_{u,v} \sup_{\mathcal{J}(u,v,r)} \sup_{(f,g) \in
\Delta^{(u)}_1 \times \Delta^{(v)}_1} \frac{\left|cov \left(f(
Z_{i_1},..., Z_{i_u}), \, g(
Z_{j_1},...,Z_{j_v})\right)\right|}{\psi(u,v,Lip(f),Lip(g))}
$$
\end{remark}

Specific functions $\psi$ yield notion of weak dependence
appropriated to describe different models. In the following, we will
consider different types of $\psi$ which are used in the current
bibliography of weak dependence, see \cite{WDB-PDoukhan,
Doukhan&Louhichi, Doukhan&Wintenberger}.

\begin{itemize}
\item $\lambda$-weak dependence: the $\lambda$-coefficient corresponds to
$\psi(u, v, a, b)= a u  + b v +  a b u v$, in this case we simply
denote $\epsilon(r) = \lambda(r)$.

\item $\eta$-weak dependence: the $\eta$-coefficient corresponds to
$\psi(u, v, a, b)= a u + b v $, we denote $\epsilon(r)
= \eta(r)$.

\item $\theta$-weak dependence: $\theta$-coefficient corresponds to $\psi(u, v, a,
b)= b v$, in this case we write $\epsilon(r) = \theta(r)$. This is
the causal counterpart of $\eta$-coefficients.

\item $\kappa$-weak dependence: in this case the  $\kappa$-coefficient corresponds to
$\psi(u, v, a, b)= a b u v$, and we write $\epsilon(r) = \kappa(r)$.

\item  $\kappa'$-weak dependence: the $\kappa'$-coefficient  correspond to
$\psi(u, v, a, b)=  a bv$, in this case we denote $\epsilon(r) =
\kappa'(r)$. This is the causal counterpart of $\kappa$-coefficient.

\end{itemize}

We now extend the notion of weak dependence to a sequence of doubly
stochastic processes. We consider $Z= \{Z^i \, i \in \mathbb{Z} \}$
a stationary sequence of stochastic processes, $Z^i= \{Z^i_t: \,
t\in \Z\}$.

\begin{definition}\label{def.6.2.2}
We say that $Z=\{Z^i: \, i \in \mathbb{Z} \}$
is $(\epsilon, \psi, Y)$-weakly dependent if, conditionally to $Y$
and for almost all trajectory $Y$, there exist a sequence
${\epsilon(r)}$ decreasing to zero at infinity, a function $\psi$
from $\mathbb{\mathbb{R}}^2\times (\R^{+})^2$ to $\mathbb{R}^{+}$
and a positive random variable $V(\y)$ such that,
$$\left|cov \left(f(Z^{i_1}_{t_1},...,Z^{i_u}_{t_u}), \, g( Z^{j_1}_{t'_1},...,Z^{j_v}_{t'_v}) \right) \right| \leq
\psi(d_{\mathbf{i_u}}(Y)\, , d_{\mathbf{j}_v}(Y), \, Lip(f), \,
Lip(g))\, \epsilon(r),$$
for all $(t_1, \ldots, t_u) \in \Z^u$ and $(t'_1, \ldots, t'_v) \in
\Z^v$, for any $r\geq 0$ and for any $(u+v)$-tuples such that
$i_1<\ldots< i_u \leq i_u + r< j_1 <\ldots<j_v$,  where $(f,g) \in
\Delta^{(u)}_1 \times \Delta^{(v)}$, $\mathbf{i}_u=(i_1,\ldots,
i_u)$, $\mathbf{j_v}=(j_1,\ldots, j_v)$ and
$$d_{\mathbf{i}_u} = \sum_{m=1}^u V(\y^{i_m}), \qquad d_{\mathbf{j}_v}= \sum_{m=1}^v V(\y^{j_m})$$
and
$\mathbb{E}^{\y^i}[|Z^i_{t}|^2] \leq V^2(\y^i)< \infty$,
$\mu.a.s.$
\end{definition}

\begin{remark}
When measure $\mu$ is degenerate, i.e when $\y^i= \y$ for all $i$,
then if the stationary process $Z= \{Z_t(\varepsilon^i): \, i, t \in
\mathbb{Z}\}$ satisfies Definition~\ref{def.6.2.2} we simply say
that $Z$ is  $\epsilon$-weakly dependent.
\end{remark}

We introduce the following condition:
\begin{description}
\item[K5.] \qquad $\mathbb{E}[V(\y)] < \infty$.
\end{description}
This condition implies that $V(\y)< \infty$, $\mu.a.s.$

\begin{remark}
In definition~\ref{def.6.2.2} the random variable $V^2(\y)$
establish a control of the $\y$-conditional variance of $Z_t(\y, \e)$.
We will see that in some case $\mathbb{E}^{\y}[|Z_{t}(\y
\e)|^2] = V^2(\y)$, for instance for $DSV^*$ processes.
\end{remark}


\section{Transference of the weak dependence property to doubly stochastic models  \label{sect.6.3}}

\hspace{0.4cm}  We show that for different weakly dependent
innovation  models  we can obtain weakly dependent doubly stochastic
models.

Let us consider doubly stochastic Bernoulli shift processes
defined, as in Section~\ref{sect.6.1}, by
\begin{equation}\label{ec.6.1.2}
Z_t(\y^i, \e)= \Hc \left(\y^i, \{\varepsilon^i_{t-k}\}_{ k\in
\mathbb{Z}} \right).
\end{equation}
where $\Hc : \R^s \times \R^{\Z} \rightarrow \R$ be a measurable
function in $L^2(\Omega \bigotimes \Omega')$ and we suppose that,
for each $t$, $\{\varepsilon^i_t: i \in \mathbb{Z}\}$ is weakly
dependent. We show that such property of weak dependence for
the innovations is transferred to the doubly stochastic processes.
We present some examples, already known in the literature, of weakly
dependent innovations.


\subsection{Examples of weakly dependent innovations \label{sect.6.3.1}}

\hspace{0.4cm} In the following, we give examples of weakly dependent innovations,
for more details to respect of theses innovation models see
\cite{WDB-PDoukhan, Doukhan&Wintenberger}.
\begin{remark}\label{rem.6.4.5}
In this work, we denote by $\mathcal{H}_{\varepsilon}$ to Bernoulli
shift innovation and simply by $\mathcal{H}$ to doubly stochastic
Bernoulli shift.
\end{remark}
%


\subsubsection{Bernoulli shift innovations}

\hspace{0.4cm} Let us take $\varepsilon^i_t=
\mathcal{H}_{\varepsilon}\left(\{\xi_{t}^{i-l}\}_{l\in
\mathbb{Z}}\right)$ a Bernoulli shift with input $\{\xi_{t}^{l}\}$. We
assume that, for all $t$ fixed, the shift $\mathcal{H}_{\varepsilon}$ satisfies the
following condition
\begin{equation}\label{ec.6.3.4}
\sum_{n=1}^{\infty} w_{n} < \infty.
\end{equation}
where $$w_{n}= \mathbb{E}\left[\left|\mathcal{H}_{\varepsilon}\left(
\{\xi_{t}^{i-l}\}_{|l|\leq n }\right) -
\mathcal{H}_{\varepsilon}\left(\{\xi_{t}^{i-l}\}_{|l|< n }\right)
\right|^2\right]^{\frac{1}{2}}.$$
This condition indeed proves that the sequence
$\{\mathcal{H}_{\varepsilon}\left( \{\xi_{t}^{i}\}_{|l|\leq n
}\right)\}$ has the Cauchy property in the space $L^2(\Omega')$ and
so its convergent. Let $\delta_r= \sum_{n \geq r} w_{n}$ then
$\{\delta_r\}_{r \in \mathbb{N}}$ converges to zero as $r
\rightarrow \infty$.

Doukhan et al. prove the following results, for each $t$ fixed, see Lemma 3.1, Lemma 3.2
and Lemma 3.3 in \cite{WDB-PDoukhan}.

\begin{theorem}[Doukhan et al.]\label{theo.6.3.2}\quad\\\vspace{-0.3cm}
\begin{itemize}
\item \textbf{Noncausal shift innovations with independent inputs}\\
If $\{\xi_{t}^{l}\}$ a sequence of i.i.d. random variables then the
innovations $\{ \varepsilon^i_t : \, i \in \mathbb{Z}\}$ are
$\eta$-weakly dependent with $\eta(r) \leq 2 \delta_{\lfloor r/2
\rfloor}$.

\item \textbf{Noncausal shift innovations with dependent inputs}\\
If $\{\xi_{t}^{l}: \, l \in \mathbb{Z}\}$ is $\eta_{\xi}$-weakly
dependence then the innovations $\{ \varepsilon^i_t : \, i \in
\mathbb{Z}\}$ are $\eta$-weakly dependents.\\
If $\{\xi_{t}^{l}\}$ is $\lambda_{\xi}$-weakly dependent then
$\{\varepsilon^i_t : \, i \in \mathbb{Z}\}$  are $\lambda$-weakly
dependents. Nevertheless, if the input sequences are $\kappa$-weakly
dependents then the innovations $\{\varepsilon^i_t : \, i \in
\mathbb{Z}\}$ are neither $\kappa$ nor $\eta$-weakly dependents.

\item  \textbf{Causal shift innovations with independent inputs}\\
Let us take $\varepsilon^i_t=
\mathcal{H}_{\varepsilon}\left(\{\xi_{t}^{i-l}\}_{l\in
\mathbb{N}}\right)$ a causal Bernoulli shift with input
$\{\xi_{t}^{l}\}$ a sequence of i.i.d. random variables. Then, the
innovations $\{\varepsilon^i_t: \, i \in \mathbb{Z}\}$ are
$\theta$-weakly dependent with $\theta(r)\leq 2\delta_r$ and
$\delta_r= \sum_{n \geq r} w_{n}$.
\end{itemize}
\end{theorem}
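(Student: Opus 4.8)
The three statements share a single engine: a truncation/coupling argument built on the Cauchy estimate \eqref{ec.6.3.4}. For each $n$ I would set
\[
\varepsilon^{i,(n)}_t = \Hc_\varepsilon\!\left(\{\xi^{i-l}_t\}_{|l|\le n}\right),
\]
so that $\varepsilon^{i,(n)}_t$ depends only on the inputs whose index lies in the window $[i-n,i+n]$, and the telescoping sum defining the $w_m$ gives the $L^2$-control $\|\varepsilon^i_t-\varepsilon^{i,(n)}_t\|_2\le \delta_{n+1}$. Given a target covariance with $(f,g)\in\Delta_1^{(u)}\times\Delta_1^{(v)}$, write $f^{(n)},g^{(n)}$ for $f,g$ evaluated at the truncated innovations and decompose
\[
cov(f,g)=cov\!\left(f-f^{(n)},g\right)+cov\!\left(f^{(n)},g-g^{(n)}\right)+cov\!\left(f^{(n)},g^{(n)}\right).
\]
Using $\|f\|_\infty,\|g\|_\infty\le1$, the elementary bound $|cov(A,B)|\le 2\|B\|_\infty\,\E|A|$, the Lipschitz property, and the $L^2$-estimate above, the first two terms are bounded by $2\,Lip(f)\,u\,\delta_{n+1}$ and $2\,Lip(g)\,v\,\delta_{n+1}$ respectively; together they already carry the $\eta$-weight $Lip(f)u+Lip(g)v$.

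For the noncausal independent case I would choose $n=\lfloor r/2\rfloor$. Then the windows of the left truncated variables (indices $\le i_u+n$) and of the right ones (indices $\ge j_1-n$) are disjoint, because $i_1<\dots<i_u\le i_u+r<j_1$ forces $i_u+n<j_1-n$; since the inputs are i.i.d. the third term vanishes and $\eta(r)\le 2\delta_{\lfloor r/2\rfloor}$ follows by monotonicity of $\delta$. The causal case is lighter: here $\varepsilon^{i}_t$ already depends only on inputs of index $\le i$, so the left block needs no truncation and only the term $cov(f,g-g^{(n)})$ survives. Choosing $n=r$ makes the untruncated left block (indices $\le i_u$) and the right truncated block (indices $\ge j_1-n$) disjoint, the surviving covariance vanishes by independence, and since only the right factor is touched the bound is of $\theta$-type, giving $\theta(r)\le 2\delta_r$.

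For dependent inputs (the middle assertion) the third term no longer vanishes. Instead $f^{(n)}$ and $g^{(n)}$ are bounded Lipschitz functions of two disjoint input blocks separated by a gap of order $r-2n$, so I would bound it by the weak dependence coefficient of the inputs $\{\xi^l_t\}$ themselves. The delicate bookkeeping is to push the Lipschitz constants and the argument counts through the composition $\xi\mapsto\varepsilon^{(n)}\mapsto f^{(n)}$: the regularity encoded in the $w_m$ controls the Lipschitz constant of $\varepsilon^{i,(n)}$ in the inputs, and composing with $f$ multiplies by $Lip(f)$ and sums over the $u$ arguments. One then checks that the additive weight $au+bv$ of $\eta_\xi$ reproduces an $\eta$-weight, and likewise that $\lambda_\xi$ reproduces $\lambda$, after optimizing $n$ against the truncation error $\delta_{n+1}$. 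I expect this transference to be the main obstacle, together with the accompanying negative statement: under $\kappa$-weak dependence the multiplicative weight $abuv$ is destroyed because truncation inflates the effective number of input arguments in each block, so the product structure cannot be recovered; I would substantiate the failure of both $\kappa$- and $\eta$-weak dependence by a direct example rather than by the coupling bound.
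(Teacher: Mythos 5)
Your truncation/coupling engine is exactly the right one for the first and third bullets, and it coincides with the argument behind the statement: the paper itself does not prove Theorem~\ref{theo.6.3.2} (it quotes Lemmas 3.1--3.3 of \cite{WDB-PDoukhan}), but it reproduces and adapts precisely this proof for the doubly stochastic versions, Theorems~\ref{theo.6.4.1} and \ref{theo.6.4.2}. There, as in your proposal, one sets $\e^{i,(s)}_t=\Hc_{\e}\left(\{\xi^{i-l}_t\1_{|l|<s}\}_{l}\right)$, decomposes $|cov(f,g)|\le|cov(f-f^{(s)},g)|+|cov(f^{(s)},g-g^{(s)})|+|cov(f^{(s)},g^{(s)})|$, kills the last term by disjointness of the truncation windows together with independence of the inputs (your arithmetic $2s\le r$ so $i_u+s<j_1-s$ is correct), and controls the first two by $2\,Lip(f)\sum_{m}\E|\e^{i_m}_t-\e^{i_m,(s)}_t|+2\,Lip(g)\sum_{m}\E|\e^{j_m}_t-\e^{j_m,(s)}_t|\le \left(Lip(f)\,u+Lip(g)\,v\right)2\delta_s$, which is the $\eta$ shape; the causal case with one-sided truncation of $g$ alone at level $s=r$, yielding the $\theta$ shape $\theta(r)\le 2\delta_r$, is likewise exactly the paper's proof of Theorem~\ref{theo.6.4.2}.

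The middle bullet is where your sketch has a genuine gap. You propose to bound $cov(f^{(n)},g^{(n)})$ by the weak dependence coefficient of the inputs, asserting that ``the regularity encoded in the $w_m$ controls the Lipschitz constant of $\e^{i,(n)}$ in the inputs.'' It does not: $w_n$ in \eqref{ec.6.3.4} is an $L^2$ modulus --- the $L^2$ norm of the difference of $\Hc_{\e}$ evaluated at the \emph{same} random input under two truncation windows --- and it carries no pointwise information about how $\Hc_{\e}$ varies when the input \emph{values} are perturbed. Under \eqref{ec.6.3.4} alone the truncated functionals $f^{(n)},g^{(n)}$ need not be Lipschitz (or even continuous) functions of the $\xi$-blocks, so this covariance term cannot be fed into the $\eta_{\xi}$- or $\lambda_{\xi}$-coefficient of the inputs, whose definition applies only to bounded Lipschitz test functions. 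The transference for dependent inputs in \cite{WDB-PDoukhan} rests on an \emph{additional} coordinatewise Lipschitz hypothesis on $\Hc_{\e}$ (summable coefficients $a_l$); with it, the composition bookkeeping you describe goes through, and it also explains the shape of the conclusion: the truncation error contributes additive terms of type $au+bv$ while the input coefficient contributes (for multiplicative-type inputs) a term of type $abuv$, so the sum is of $\lambda$ type and neither the pure $\kappa$ nor the pure $\eta$ form survives. Finally, the negative assertion (``neither $\kappa$ nor $\eta$'') is only announced in your proposal; a proof requires the explicit counterexample you defer, so as written that part is incomplete.
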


\begin{example}[Linear innovations]\label{ex.Linnov}
If $\varepsilon^i_t= \sum_{l \in \mathbb{Z}} \beta_l \xi_{t}^{i-l}$,
$\|\beta\|_2^2= \sum_l \beta_l^2 < \infty$ with $\{\xi_{t}^{l}\}$ a
sequence of i.i.d. random variables, then $\varepsilon^i_t$ is $\eta$-weakly dependent with
$\eta(r) \leq \left(\sum_{|l|\geq r}\beta_l^2\right)^{\frac{1}{2}}$.
\end{example}

\begin{example}[Volterra innovations]\label{ex.Vinnov}
If we suppose that $\varepsilon_t^i$ is defined by the following
Volterra expansion
$$\varepsilon^i_t= \sum_{k=0}^{\infty} \sum_{l_1<,\ldots,<l_k}
\beta_{k;l_1,\ldots,l_k} \xi_{t}^{i-l_1}\cdots\xi_{t}^{i-l_k} $$
where $\{\beta_{k;l_1,\ldots,l_k}: \, (l_1,\ldots,l_k) \in
\mathbb{Z}^k\}$ is a sequence of real numbers and $\{\xi_{t}^{l}\}$ is a
sequence of i.i.d. random variables. This expression converges in
$L^2(\Omega')$ provided that
$\mathbb{E}\left[|\xi_{t}^{i}|^2\right]<\infty$ and
$$\|\beta\|_2^2= \sum_{k=0}^{\infty} \sum_{l_1<,\ldots,<l_k}
|\beta_{k;l_1,\ldots,l_k}|^2 < \infty.$$
In this case, we can verify that $\varepsilon^i_t$ is $\eta$-weakly dependent.
$$\eta(r)^2\leq \|\beta\|_2^2 - \sum_{k=1}^{2r-1} \sum_{-r<l_1<,\ldots,<l_k<r}
|\beta_{k;l_1,\ldots,l_k}|^2.$$
Condition $\|\beta\|_2< \infty$ implies that $\eta(r)$
converges to zero as $r \rightarrow \infty$.

In the general case of Volterra innovation
$$\varepsilon^i_t= \sum_{k=0}^{\infty} \sum_{l_1,\ldots,l_k}
\beta_{k;l_1,\ldots,l_k} \xi_{t}^{i-l_1}\cdots\xi_{t}^{i-l_k}, $$
we have that this expansion converge in $L^2(\Omega')$, whenever
\begin{equation}\label{ec.6.3.5}
\sum_{k=0}^{\infty}\E\left[|\xi_{t}^{i}|^{2k}\right]
\sum_{l_1,\ldots,l_k} |\beta_{k;l_1,\ldots,l_k}|^2 < \infty.
\end{equation}
Then,
$$w_{n}^2 = \sum_{k=1}^{2n+1} \E\left[|\xi_{t}^{i}|^{2k}\right]
\sum_{-n=l_1<,\ldots,<l_k=n} |\beta_{k;l_1,\ldots,l_k}|^2,$$
\begin{equation*}
\begin{array}{llll}
\eta(r)^2 & \leq &   & \underset{k=0}{\overset{\infty}\sum}
\E[|\xi_{t}^{i}|^{2k}] \underset{l_1<\ldots<l_k< -r}{\sum} |\beta_{k;l_1,\ldots,l_k}|^2\\
          &      & + & \underset{k=0}{\overset{\infty}\sum}
\E[|\xi_{t}^{i}|^{2k}] \underset{l_1<\ldots<l_k< -r}{\sum}
|\beta_{k;l_1,\ldots,l_k}|^2.
\end{array}
\end{equation*}
So, under condition \eqref{ec.6.3.5}, we can also obtain that
$\eta(r)$ converges to zero as $r \rightarrow \infty$.

This case include noncausal $LARCH$, $ARCH$, $GARCH$ and bilinear
models.
\end{example}

\begin{example}[Causal $LARCH(\infty)$ innovations] \quad\\
general causal $LARCH(\infty)$ models are $\theta$-weakly
dependents. For instance, linear innovations, $ARCH$, $GARCH$ and
bilinear models, see \cite{Doukhan&Teyssiere&Winant2006}.
\end{example}


\subsubsection{Associated innovations}

\hspace{0.4cm} A process $\{\e^i\}$ is associated if
$$cov\left(f\left(\e^{i_1}, \ldots, \e^{i_n}\right),
g\left(\e^{i_1}, \ldots, \e^{i_n} \right)\right) \geq 0,$$
for any coordinate wise non-decreasing  function $f,g: \R^n
\rightarrow \R$. Associated processes or Gaussian stationary
processes are $\kappa$-weakly dependents with
$$\kappa(r)= \sup_{j\geq r} \left|cov\left(\e^i, \e^{i+j}\right)\right|.$$
For instance, independent sequence are associated and gaussian
processes with non-negative covariance are also associated. This
models are classically built from i.i.d sequence, see
\cite{Louhichi2000}.


\subsection{Uniform Lipschitz Bernoulli Shifts with weakly dependent innovations \label{sect.6.3.2}}

\hspace{0.4cm} Here, we consider the $DSULBS$ processes $Z^i= \{Z^i_t: \, t \in
\Z\}$ given by
$$Z^i_t= \mathcal{H}\left(\y^i, \{\varepsilon^i_{t-k}\}_{ k\in \mathbb{Z}} \right).$$
where $\Hc: \R^s \times \R^{\Z} \rightarrow \R$ be a measurable
function satisfying conditions~\eqref{ec.4.2.6} and
\eqref{ec.4.2.7}.

A simple example of this situation is the doubly stochastic linear
process. Another examples are obtained considering Lipschitz
function of linear processes.

In the following, we will consider that, for each $t$,
$\{\varepsilon^i_t: i \in \mathbb{Z}\}$ is $\epsilon$-weakly
dependent. Then, we prove that the sequence $Z$ of $DSULBS$
processes is in general a $(\epsilon,Y)$-weakly dependent doubly
stochastic processes. This is a new result and is obtained when the
$\epsilon$-coefficient is taking in the cases of $\lambda$, $\eta$,
$\theta$, $\kappa$ or $\kappa'$-weak dependence.

\begin{theorem}\label{theo.6.3.1} Under conditions \eqref{ec.4.2.6}
and \eqref{ec.4.2.7}, the $DSULBS$ processes $Z$ with $\epsilon$-weakly dependent innovation is
$(\epsilon,Y)$-weakly dependent, with $V(\y^{i})= \|a(\y^{i})\|_1$.
\end{theorem}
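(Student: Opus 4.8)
The plan is to transfer the innovation covariance bound to the processes $Z^i$ by exhibiting each $Z^{i_m}_{t_m}$ as a Lipschitz image of the innovation coordinates carried by the single index $i_m$, and then to invoke the $\epsilon$-weak dependence of $\{\e^i\}$. Fix two blocks with $i_1<\cdots<i_u\le i_u+r<j_1<\cdots<j_v$ and test functions $(f,g)\in\Delta^{(u)}_1\times\Delta^{(v)}$; conditionally on $Y$ all expectations are taken under $\Pro'$. Write $\mathbf Z=(Z^{i_1}_{t_1},\dots,Z^{i_u}_{t_u})$ and $\mathbf Z'=(Z^{j_1}_{t'_1},\dots,Z^{j_v}_{t'_v})$. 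The point is that $Z^{i_m}_{t_m}=\Hc(\y^{i_m},\{\e^{i_m}_{t_m-k}\}_k)$ depends only on innovations at index $i_m$, so the separation $i_u+r<j_1$ between the two index groups is exactly the separation governing the innovation coefficient $\epsilon(r)$.

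Next I would truncate the memory: set $Z^{i_m,(p)}_{t_m}=\Hc(\y^{i_m},\{\e^{i_m}_{t_m-k}\1_{|k|\le p}\}_k)$. Condition \eqref{ec.4.2.6} gives
$$\bigl\|Z^{i_m}_{t_m}-Z^{i_m,(p)}_{t_m}\bigr\|_{L^2(\Pro')}\le\|\e_0\|_{2}\sum_{|k|>p}|a_k(\y^{i_m})|\xrightarrow[p\to\infty]{}0,\quad\mu\text{-a.s.},$$
the limit holding because $a(\y^{i_m})\in\ell_1$ by \eqref{ec.4.2.7}. Since $f$ is bounded and Lipschitz and $g\in\Delta^{(v)}$ is Lipschitz with $g(\mathbf Z')\in L^2(\Pro')$ (from the conditional second-moment control established below), the splitting $cov(f(\mathbf Z),g(\mathbf Z'))-cov(f(\mathbf Z^{(p)}),g(\mathbf Z'^{(p)}))=cov(f(\mathbf Z)-f(\mathbf Z^{(p)}),g(\mathbf Z'))+cov(f(\mathbf Z^{(p)}),g(\mathbf Z')-g(\mathbf Z'^{(p)}))$ is controlled by Cauchy--Schwarz on the first term (keeping the $L^2$-small factor $f(\mathbf Z)-f(\mathbf Z^{(p)})$) and by $|cov(A,B)|\le2\|A\|_\infty\E|B|$ on the second (keeping the bounded factor $f$); both terms vanish as $p\to\infty$, $\mu$-a.s. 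So it suffices to bound the truncated covariance.

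Now $f(\mathbf Z^{(p)})$ is a function $\tilde f_p$ of the finitely many innovations $\{\e^{i_m}_s:|s-t_m|\le p\}$ carried by $i_1,\dots,i_u$, and $g(\mathbf Z'^{(p)})$ is a function $\tilde g_p$ carried by $j_1,\dots,j_v$. For a perturbation of the innovations in the single block $i_m$, conditions \eqref{ec.4.2.6} and the Lipschitz bound on $f$ give
$$\bigl|\tilde f_p(\e)-\tilde f_p({\e'})\bigr|\le Lip(f)\sum_{|k|\le p}|a_k(\y^{i_m})|\,\bigl|\e^{i_m}_{t_m-k}-{\e'}^{i_m}_{t_m-k}\bigr|,$$
so the total Lipschitz weight carried by index $i_m$ is at most $Lip(f)\,\|a(\y^{i_m})\|_1=Lip(f)\,V(\y^{i_m})$, and likewise $Lip(g)\,V(\y^{j_n})$ for $\tilde g_p$. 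Applying the $\epsilon$-weak dependence of $\{\e^i\}$ to $\tilde f_p,\tilde g_p$ and summing these per-index weights reconstitutes exactly the arguments $d_{\mathbf i_u}=\sum_m V(\y^{i_m})$ and $d_{\mathbf j_v}=\sum_n V(\y^{j_n})$ inside $\psi$, together with the factor $\epsilon(r)$, uniformly in $p$; this works simultaneously for each $\psi$ of $\lambda$-, $\eta$-, $\theta$-, $\kappa$- and $\kappa'$-type since each is built multiplicatively/additively from the weights and $Lip(f),Lip(g)$. Letting $p\to\infty$ and combining with the previous step yields Definition~\ref{def.6.2.2}. Finally, taking $\e'=0$ in \eqref{ec.4.2.6} gives $\|Z_t-\Hc(\y,0)\|_{L^2(\Pro')}\le\|a(\y)\|_1\|\e_0\|_{2}$, which provides the required conditional variance control with $V(\y)=\|a(\y)\|_1$ after the normalization $\|\e_0\|_2=1$.

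The main obstacle will be the bookkeeping in the third step: one must sum the coordinate-wise Lipschitz weights over the time coordinates $t_m-k$ at each fixed index $i_m$ so that they reassemble precisely $\|a(\y^{i_m})\|_1$, keep the resulting weak-dependence bound uniform in the truncation level $p$, and ensure that the several times $t_m-k$ are handled within the weak-dependence inequality for the full innovation sequence $\{\e^i\}$ rather than only a single time slice.
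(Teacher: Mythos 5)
Your opening moves (memory truncation $Z^{i,(p)}_t$, the $L^2$ estimate $\|Z^{i}_t-Z^{i,(p)}_t\|_{L^2(\Pro')}\leq \|\e_0\|_2\sum_{|k|>p}|a_k(\y^{i})|$, and the covariance splitting that reduces everything to the truncated case) match the outer skeleton of the paper's Parts a--b, but your central step does not go through as stated. Having written $f(\mathbf{Z}^{(p)})=\tilde f_p(\cdot)$ as a function of the $u(2p+1)$ scalar innovation coordinates $\{\e^{i_m}_{t_m-k}:\,|k|\leq p\}$, you claim that applying the $\epsilon$-weak dependence of $\{\e^i\}$ to $\tilde f_p,\tilde g_p$ ``reconstitutes exactly the arguments $d_{\mathbf{i}_u}$ and $d_{\mathbf{j}_v}$ inside $\psi$, uniformly in $p$.'' Definition~\ref{def.6.2.1} licenses no such weighted application: it takes as inputs only the raw number of arguments and one uniform Lipschitz constant per function, so it yields $\psi\left(u(2p+1),\,v(2p+1),\,Lip(\tilde f_p),\,Lip(\tilde g_p)\right)\epsilon(r)$ with $Lip(\tilde f_p)$ at best $Lip(f)\max_{m,|k|\leq p}|a_k(\y^{i_m})|$. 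For $\eta$- or $\lambda$-type $\psi$ the first argument then contributes a term of order $u(2p+1)\,Lip(f)\max_{m,k}|a_k(\y^{i_m})|$, which for coefficients such as $a_k(\y)\asymp k^{-2}$ diverges as $p\to\infty$ rather than collapsing to $Lip(f)\sum_m\|a(\y^{i_m})\|_1$; the same blow-up occurs for the $\theta$-, $\kappa$- and $\kappa'$-types. Converting your (correct) per-coordinate weighted Lipschitz bound into weighted arguments of $\psi$ is precisely the content of the theorem, and it is what you dismiss as ``bookkeeping'': the paper obtains it by an induction on the truncation radius in which the covariance is telescoped through conditional expectations given $\sigma$-fields of single innovation coordinates (the $\mathcal{B}^{m}_{k}$, with one-variable functions $\bar F^{(0,m)}_k$ satisfying $Lip(\bar F^{(0,m)}_k)\leq Lip(f)|a_k(\y^{i_m})|$), so that the innovation inequality is only ever invoked for functions of one variable on each side and the weights $|a_k(\y^{i_m})|$ accumulate additively into $d^{(s)}_{\mathbf{i}_u}$, uniformly in the truncation level.

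The obstacle you name in your final paragraph and leave open is likewise substantive, not cosmetic. The hypothesis of Section~\ref{sect.6.3.2} is that for each fixed $t$ the slice $\{\e^i_t:\,i\in\Z\}$ is $\epsilon$-weakly dependent, whereas your $\tilde f_p$ mixes $2p+1$ distinct time indices per site; applying the dependence inequality to $\tilde f_p,\tilde g_p$ in one shot presupposes weak dependence of the whole doubly indexed array jointly across all times, a strictly stronger assumption than the one available. The paper's coordinate-by-coordinate conditioning reduces each application to a single pair of coordinates, one time index per side, which is how it remains essentially within the stated hypothesis. Without either adding a weighted, multi-time weak-dependence inequality as an extra assumption on the innovations, or supplying the telescoping decomposition, your third step fails even though the truncation framework surrounding it is sound. (A minor further point: your variance control bounds $\|Z_t-\Hc(\y,0)\|_{L^2(\Pro')}$, whereas Definition~\ref{def.6.2.2} requires $\E^{\y^i}[|Z^i_t|^2]\leq V^2(\y^i)$; this needs the centering of $Z$ to be addressed, as the paper implicitly does.)
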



We will give the proof in Section~\ref{sect.6.6}.

\begin{remark}\label{rm.6.4.1}
If condition
\begin{description}
\item[E$2_\delta$] \qquad $\E[|\e^i_t|^{2+\delta}]< \infty$ for some $\delta>0$,
\end{description}
and condition~\eqref{ec.4.2.7} holds then
condition~K$2_{\delta}$ is satisfied for the $DSULBS$ process.
\end{remark}

\begin{remark}\label{rm.6.4.2}
Condition K5 is  satisfied by this model, since $V(\y)= \|a(\y)\|_1$ and so K5 is implied by condition~\eqref{ec.4.2.7}.
\end{remark}


\subsection{Doubly stochastic Volterra processes with weakly
dependent innovations \label{sect.6.3.3}}

\hspace{0.4cm} We will consider  $DVS^*$ processes defined by equation \eqref{ec.4.3.4}; i.e. we consider the Bernoulli shift
$$\Hc(\y,\e)= \sum_{k=0}^{\infty} \sum_{j_1<\ldots < j_k}
c_{k:j_1\ldots j_k}(\y) \e_{j_1}\ldots  \e_{j_k}.$$
Then, for all $\e, \e' \in \R^{\Z}$, we have that
\begin{equation}\label{ec.6.4.0}
\Hc(\y,\e) - \Hc(\y,\e')= \sum_{l \in \Z} \sum_{k=1}^{\infty}
\sum_{u=1}^k \sum_{\substack{j_1<\ldots<j_k \\ j_u=l }} c_{k:j_1,
\ldots, j_k} \e'_{j_1}\ldots
\e'_{j_{u-1}}(\e_{l}-\e'_{l})\e_{j_{u+1}}\ldots \e_{j_k}.
\end{equation}
We suppose that the $DVS^*$ processes $Z^i_t= \Hc(\y,
\{\e^i_{t-k}\}_{k\in\Z})$ are such that the sequence $c(\y)=\{c_{k:j_1,
\ldots, j_k}(\y): \, k\in \Z , \,(j_1,
\ldots, j_k) \in \Z^k \}$ satisfies condition C2, i.e.
\begin{description}
\item[C2]\qquad $\E[\|c(\y)\|_2^2]< \infty$.
\end{description}

Let us take $\varepsilon^i_t=
\mathcal{H}_{\varepsilon}\left(\{\xi_{t}^{i-l}\}_{l\in
\mathbb{Z}}\right)$ a  Bernoulli shift with independent inputs
$\{\xi_{t}^{l}\}$. We assume, in the same way of
Section~\ref{sect.6.3.1}, that the Bernoulli shift $\mathcal{H}_{\varepsilon}$ satisfies
condition~\eqref{ec.6.3.4}. Then, $\delta_r= \sum_{n \geq r} w_{n}$
converges to zero as $r \rightarrow \infty$.

In this section we assume the following condition
\begin{equation}\label{ec.6.4.1}
\E\left[\left|\e^{i,(s)}_t\right|^2\right]\leq \E\left[\left|\e^{i}_t\right|^2\right]=1.
\end{equation}

An adaptation of the proof of Lemma 3.1 and Lemma 3.2 given in \cite{WDB-PDoukhan}, allows us  to extend easily these results for doubly stochastic Volterra processes with Bernoulli
shift innovations.

\begin{theorem}\label{theo.6.4.1}
Under  conditions \eqref{ec.6.3.4}, \eqref{ec.6.4.1} and
condition~C2, the sequence $Z$ of $DSV^*$ processes with noncausal
Bernoulli shift innovations is $(\eta, Y)$-weakly dependent, with
$\eta(r)\leq 2\delta_{\lfloor r/2 \rfloor}$, and $V(\y^i)=\left\|c
(\y^{i})\right\|_2$.
\end{theorem}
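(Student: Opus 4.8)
The plan is to establish the covariance inequality of Definition~\ref{def.6.2.2} by a two-block coupling argument, extending Lemmas~3.1 and~3.2 of \cite{WDB-PDoukhan} from the innovations to the Volterra process itself. Fix $(u+v)$-tuples with $i_1<\ldots<i_u\le i_u+r<j_1<\ldots<j_v$ and put $M=\lfloor r/2\rfloor$. For each spatial index $i$ I would define a truncated innovation $\tilde\varepsilon^{i,(M)}_t$ by keeping the inputs $\xi^{i-l}_t$ with $|l|\le M$ and replacing those with $|l|>M$ by independent copies; this leaves $\tilde\varepsilon^{i,(M)}_t$ equidistributed with $\varepsilon^i_t$, in particular centered with unit second moment. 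Substituting these innovations into the orthogonal expansion~\eqref{ec.4.3.4} produces a truncated process $\tilde Z^{i,(M)}_t$ that depends only on the inputs $\xi^p_s$ with $|p-i|\le M$. Because $j_1-i_u>r\ge 2M$, the truncated left block $(\tilde Z^{i_1,(M)}_{t_1},\ldots,\tilde Z^{i_u,(M)}_{t_u})$ and the truncated right block $(\tilde Z^{j_1,(M)}_{t'_1},\ldots,\tilde Z^{j_v,(M)}_{t'_v})$ are functions of disjoint families of the i.i.d. inputs; they are therefore independent and their covariance vanishes.

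I would then split
\[
cov\big(f(Z^L),g(Z^R)\big)=cov\big(f(Z^L)-f(\tilde Z^L),\,g(Z^R)\big)+cov\big(f(\tilde Z^L),\,g(Z^R)-g(\tilde Z^R)\big)+cov\big(f(\tilde Z^L),g(\tilde Z^R)\big),
\]
where $Z^L,\tilde Z^L$ abbreviate the left blocks and $Z^R,\tilde Z^R$ the right ones. The last term is $0$ by the previous step. Using $\|f\|_\infty\le1$ and $\|g\|_\infty\le1$ together with the elementary bound $|cov(A,B)|\le 2\|B\|_\infty\,\E|A|$, and then the Lipschitz property of $f$ and $g$, the first two terms are controlled by the $L^2$ coupling errors, giving
\[
\big|cov(f(Z^L),g(Z^R))\big|\le 2\,Lip(f)\sum_{s=1}^u\big\|Z^{i_s}_{t_s}-\tilde Z^{i_s,(M)}_{t_s}\big\|_2+2\,Lip(g)\sum_{s=1}^v\big\|Z^{j_s}_{t'_s}-\tilde Z^{j_s,(M)}_{t'_s}\big\|_2 .
\]
Matching this with the $\eta$-shape $\psi(a,b,Lip(f),Lip(g))=Lip(f)\,a+Lip(g)\,b$ and $d_{\mathbf i_u}=\sum_s V(\y^{i_s})$, it remains to prove the per-process estimate $\|Z^i_t-\tilde Z^{i,(M)}_t\|_2\le\|c(\y^i)\|_2\,\delta_M$, which delivers $\eta(r)\le 2\delta_{\lfloor r/2\rfloor}$ with $V(\y^i)=\|c(\y^i)\|_2$.

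For this per-process step I would expand $Z^i_t-\tilde Z^{i,(M)}_t$ along the orthogonal kernels of~\eqref{ec.4.3.4}. Factors attached to distinct time-slices $t-l_a$ use disjoint independent inputs, and in the orthogonalized representation~\eqref{ec.4.3.6} they are centered; hence the products $P_{\mathbf l}-\tilde P_{\mathbf l}$ belonging to different ordered tuples $\mathbf l$ are mutually orthogonal, so that $\|Z^i_t-\tilde Z^{i,(M)}_t\|_2^2=\sum_{k,\mathbf l}|c_{k:\mathbf l}(\y^i)|^2\,\|P_{\mathbf l}-\tilde P_{\mathbf l}\|_2^2$. Condition~\eqref{ec.6.4.1} keeps every factor's second moment at most $1$, and a factorwise telescoping bounds each $\|P_{\mathbf l}-\tilde P_{\mathbf l}\|_2^2$ in terms of the per-innovation coupling errors $\|\varepsilon^i_{t-l_a}-\tilde\varepsilon^{i,(M)}_{t-l_a}\|_2^2$, each of which is $\le\delta_M^2$ by the Cauchy estimate underlying~\eqref{ec.6.3.4}. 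The same orthogonal computation gives $\E^{\y^i}[|Z^i_t|^2]=\|c(\y^i)\|_2^2$, which is exactly the variance control $V(\y^i)=\|c(\y^i)\|_2$ demanded by Definition~\ref{def.6.2.2}; its $\mu$-a.s. finiteness and integrability follow from condition~C2.

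The step I expect to be the genuine obstacle is the last one: the factorwise telescoping attaches a coupling error to each of the $k$ factors of a $k$-th order kernel, so bounding $\|P_{\mathbf l}-\tilde P_{\mathbf l}\|_2$ and summing against $\|c(\y^i)\|_2$ without picking up an order-dependent loss is delicate, and is precisely where the orthogonality of the $DSV^*$ expansion and the normalization~\eqref{ec.6.4.1} must be used to evaluate the $L^2$ norm kernel-by-kernel rather than through crude triangle inequalities (the estimate being cleanest, and reducing to Theorem~\ref{theo.6.3.1}, in the linear order-one case). The constant $2$ and the argument $\lfloor r/2\rfloor$ in $\eta(r)\le 2\delta_{\lfloor r/2\rfloor}$ arise respectively from coupling both blocks and from splitting the separation $r$ symmetrically; this is the content that has to be adapted from Lemmas~3.1 and~3.2 of \cite{WDB-PDoukhan}.
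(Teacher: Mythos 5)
Your skeleton is, step for step, the paper's own proof of Theorem~\ref{theo.6.4.1}: the paper likewise truncates the inputs at a radius $s$ (there by zeroing, $\xi^{i-l,(s)}_k=\xi^{i-l}_k\1_{\{|l|<s\}}$, rather than by substituting independent copies -- an inessential variant; your coupling has the small advantage of making \eqref{ec.6.4.1} automatic, provided you draw the auxiliary copies for the two blocks from independent families), uses that the truncated left and right blocks are independent once $r>2s$, performs the same covariance split with the cross term vanishing, bounds the remaining two terms by $2\,Lip(f)\sum_m\E^Y[|Z^{i_m}_{t_m}-Z^{i_m,(s)}_{t_m}|]+2\,Lip(g)\sum_m\E^Y[|Z^{j_m}_{t'_m}-Z^{j_m,(s)}_{t'_m}|]$, and reduces everything to the single per-process estimate $\E^{\y^i}[|Z^i_t-Z^{i,(s)}_t|]\le\|c(\y^i)\|_2\,\delta_s$, choosing $s=\lfloor r/2\rfloor$ at the end. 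Up to that reduction your argument is complete and correct.

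The genuine gap is that you never prove the per-process estimate -- you only reduce to it and then flag the order-dependent loss as an expected obstacle. The paper's device for this step is the exact difference formula \eqref{ec.6.4.0}, which telescopes $\Hc(\y,\e)-\Hc(\y,\e')$ coordinate by coordinate with a single swapped factor $(\e_l-\e'_l)$ per term, and then evaluates the $L^2$ norm using \eqref{ec.6.4.1} and orthogonality (for which centering of the innovations is needed, as you implicitly use). But note that your suspicion about the loss is well founded and is not actually dispatched in the paper either: both your factorwise telescoping and the paper's own display produce the quantity
\begin{equation*}
\left(\sum_{l\in\Z}\sum_{k=1}^{\infty}\sum_{u=1}^{k}\sum_{\substack{j_1<\ldots<j_k\\ j_u=l}} c^2_{k:j_1,\ldots,j_k}(\y^i)\right)^{\frac{1}{2}}
=\left(\sum_{k=1}^{\infty} k \sum_{j_1<\ldots<j_k} c^2_{k:j_1,\ldots,j_k}(\y^i)\right)^{\frac{1}{2}},
\end{equation*}
in which each order-$k$ kernel of \eqref{ec.4.3.4} is counted $k$ times; equivalently, in your notation $\|P_{\mathbf{l}}-\tilde P_{\mathbf{l}}\|_2^2=2(1-\rho^k)\le 2k(1-\rho)$ with $\rho=\E[\e^i_t\tilde\e^{i,(M)}_t]$, and the factor $k$ is essentially sharp as $\rho\to 1$. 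The paper then bounds this quantity by $\|c(\y^i)\|_2$ without comment, which under condition~C2 alone is false in general; a complete proof along either route requires the strengthened summability $\sum_k k\,\sum_{j_1<\ldots<j_k}\E[c^2_{k:j_1,\ldots,j_k}(\y)]<\infty$, with $V(\y)$ adjusted accordingly (in the linear, order-one case the two coincide, which is why your reduction is clean there). So your proposal is the paper's proof minus its one substantive computation, and the obstacle you name is precisely the soft spot that the published argument glosses rather than resolves.
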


\begin{theorem}\label{theo.6.4.2}
Under  conditions \eqref{ec.6.3.4}, \eqref{ec.6.4.1} and
condition~C2, the sequence $Z$ of $DSV^*$ processes with causal
Bernoulli shift innovations is $(\theta, Y)$-weakly dependent, with
$\theta(r)\leq 2\delta_{r}$ and $V(\y^i)= \|c(\y^i)\|_2$.
\end{theorem}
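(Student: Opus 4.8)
The plan is to reproduce the coupling scheme used for Theorem~\ref{theo.6.4.1}, the new point being that a \emph{causal} shift innovation feeds only the second, higher-indexed, block of variables; this one-sidedness is exactly what turns the symmetric $\eta$-bound into the causal $\theta$-bound, in which only $Lip(g)$ and $d_{\mathbf{j}_v}(Y)$ survive. Throughout I would work conditionally on $Y$, fix a tuple $i_1<\ldots<i_u\leq i_u+r<j_1<\ldots<j_v$ and a pair $(f,g)\in\Delta^{(u)}_1\times\Delta^{(v)}$, and write $Z^{i}_{s}=\Hc(\y^{i},\{\e^{i}_{s-k}\}_{k\in\Z})$ with the causal innovation $\e^{i}_{s}=\Hc_{\e}(\{\xi^{i-l}_{s}\}_{l\geq 0})$, so that $\e^{i}_s$ uses only inputs of spatial index $\leq i$.

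First I would construct the decoupling copy. Since the shift is causal in $i$, the only inputs shared by $\e^{j_n}$ and the first block are those of index $m\leq i_u$, and in $\e^{j_n}_s$ they sit at causal depth $j_n-m\geq j_1-i_u>r$. I would therefore define $\tilde\e^{j_n}_s$ by replacing, in the representation of $\e^{j_n}_s$, every input of index $m\leq i_u$ by an independent fresh copy, and set $\tilde Z^{j_n}_{t'_n}=\Hc(\y^{j_n},\{\tilde\e^{j_n}_{t'_n-k}\})$. Then $(Z^{i_1}_{t_1},\ldots,Z^{i_u}_{t_u})$ depends only on original inputs of index $\leq i_u$, while $(\tilde Z^{j_1}_{t'_1},\ldots,\tilde Z^{j_v}_{t'_v})$ depends only on inputs of index $>i_u$ together with the fresh copies, so conditionally on $Y$ the two vectors are independent. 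Because each $\tilde Z^{j_n}_{t'_n}$ has the same $Y$-conditional law as $Z^{j_n}_{t'_n}$, the conditional means agree, $\E^{Y}[g(Z_{\mathbf j})-g(\tilde Z_{\mathbf j})]=0$, and independence gives $cov^{Y}(f(Z_{\mathbf i}),g(\tilde Z_{\mathbf j}))=0$. Hence $cov^{Y}(f(Z_{\mathbf i}),g(Z_{\mathbf j}))=\E^{Y}[f(Z_{\mathbf i})(g(Z_{\mathbf j})-g(\tilde Z_{\mathbf j}))]$, and using $\|f\|_{\infty}\leq 1$ with the Lipschitz property of $g$ I would obtain $|cov^{Y}(f(Z_{\mathbf i}),g(Z_{\mathbf j}))|\leq Lip(g)\sum_{n=1}^{v}\E^{Y}|Z^{j_n}_{t'_n}-\tilde Z^{j_n}_{t'_n}|$. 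Only the second block appears, which is precisely the $\theta$-form $\psi(a,b,x,y)=y\,b$.

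The main obstacle is the per-term increment estimate $\E^{Y}|Z^{j_n}_{t'_n}-\tilde Z^{j_n}_{t'_n}|\leq 2\,\|c(\y^{j_n})\|_2\,\delta_r$, which is where the adaptation of Lemmas~3.1--3.3 of \cite{WDB-PDoukhan} is really done and where condition~C2 and \eqref{ec.6.4.1} enter. I would expand the increment through the multilinear telescoping identity \eqref{ec.6.4.0} with $\e=\e^{j_n}$, $\e'=\tilde\e^{j_n}$, so that each summand carries exactly one innovation factor of the form $\e^{j_n}_{\cdot}-\tilde\e^{j_n}_{\cdot}$. Passing to $\E^{Y}[|\cdot|^2]$, two orthogonalities collapse the sum: the Appell-orthogonalized $DSV^*$ expansion \eqref{ec.4.3.6} decouples distinct order/index tuples, and, for each $n$, the fact that $\{\e^{j_n}_{s}\}_{s}$ is i.i.d. in time (Assumption~A1) makes the increments at distinct lags orthogonal. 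Together with $\E[|\e^{(s)}|^2]\leq 1$ this yields $\E^{Y}|Z^{j_n}_{t'_n}-\tilde Z^{j_n}_{t'_n}|^2\leq \|c(\y^{j_n})\|_2^2\,\sup_s\|\e^{j_n}_{s}-\tilde\e^{j_n}_{s}\|_2^2$. The innovation increment is then controlled by the change-of-inputs estimate for a causal shift: since only inputs at depth $>r$ are altered and the depth-$r$ truncation $\e^{j_n,(r)}_s$ (which uses inputs of index $\geq j_n-r>i_u$, all unchanged) is common to both, inserting it and using \eqref{ec.6.3.4} gives $\|\e^{j_n}_s-\tilde\e^{j_n}_s\|_2\leq 2\delta_r$. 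A Cauchy--Schwarz step produces the claimed $L^1$-bound.

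Finally I would assemble the pieces: summing over $n$ gives $|cov^{Y}(f(Z_{\mathbf i}),g(Z_{\mathbf j}))|\leq Lip(g)\big(\sum_{n=1}^{v}\|c(\y^{j_n})\|_2\big)\,2\delta_r=Lip(g)\,d_{\mathbf{j}_v}(Y)\,\theta(r)$ with $V(\y^{j_n})=\|c(\y^{j_n})\|_2$ and $\theta(r)=2\delta_r\to 0$ by \eqref{ec.6.3.4}. Condition~C2, i.e. $\E[\|c(\y)\|_2^2]<\infty$, guarantees $V(\y)<\infty$ $\mu$-a.s. and that the $Y$-conditional second moments controlling $Z^{j_n}$ are finite, so the estimate holds for a.e.\ trajectory $Y$ and matches Definition~\ref{def.6.2.2}. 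The only structural departure from Theorem~\ref{theo.6.4.1} is that causality makes perturbing the first block unnecessary: no $\lfloor r/2\rfloor$ appears, the coefficient sharpens to $2\delta_r$, and the weak dependence is of the one-sided type $\theta$ rather than $\eta$.
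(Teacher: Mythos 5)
Your proposal is correct and delivers exactly the theorem's bound, but by a coupling argument rather than the paper's truncation argument, and the comparison is instructive. The paper defines the truncated innovation $\e^{j,(r)}_t=\Hc_{\e}\bigl(\{\xi^{j-l}_t\1_{\{l<r\}}\}_{l}\bigr)$ and the corresponding $Z^{j,(r)}_t$; since the truncated innovations of the second block use only inputs of spatial index $>i_u$, the variables $\mathbf{f}$ and $\mathbf{g}^{(r)}$ are conditionally independent, whence $cov(\mathbf{f},\mathbf{g}\,|Y)=cov(\mathbf{f},\mathbf{g}-\mathbf{g}^{(r)}\,|Y)$ is bounded by $2\|f\|_{\infty}Lip(g)\sum_{m}\E^{Y}[|Z^{j_m}_{t'_m}-Z^{j_m,(r)}_{t'_m}|]$, with a \emph{single} increment cost $\|c(\y^{j_m})\|_2\,\delta_r$ taken over from the computation \eqref{ec.6.4.2} in the proof of Theorem~\ref{theo.6.4.1}; the factor $2$ sits in the covariance inequality, because $\mathbf{g}^{(r)}$ is not equal in law to $\mathbf{g}$. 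You instead replace the shared inputs (index $\leq i_u$) by fresh independent copies: your $\tilde{Z}^{j}$-block is then equal in conditional law to the original, so the mean term vanishes identically and your covariance step carries factor $1$, while the innovation increment costs $2\delta_r$ via the triangle inequality through the common depth-$r$ truncation --- the factor $2$ simply migrates, and the net bound $\theta(r)\leq 2\delta_r$ with $V(\y)=\|c(\y)\|_2$ is identical. What your route buys: the analogue of condition~\eqref{ec.6.4.1} is automatic for the substituted innovations (since $\tilde{\e}$ has exactly the law of $\e$, its second moment is exactly $1$), and the exact-equality-in-law device is conceptually cleaner; what the paper's route buys: it is shorter, needs only one auxiliary object, and reuses \eqref{ec.6.4.2} verbatim. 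One remark, not a criticism specific to you: your ``two orthogonalities'' collapse of the telescoping identity \eqref{ec.6.4.0} glosses over cross-terms in which a given time index carries an $\e$-factor in one summand and a $\tilde{\e}$-factor in another (these are correlated, not orthogonal), but this is precisely the same level of detail at which the paper itself passes from \eqref{ec.6.4.0} to \eqref{ec.6.4.2}, so your argument is no less rigorous than the published one.
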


The proofs are deferred to Section~\ref{sect.6.6}.

From Theorem~\ref{theo.6.3.2} we have that the noncausal  Bernoulli
shift innovation with independent inputs is $\eta$-weakly dependent
and the causal Bernoulli shift innovation with independent inputs is
$\theta$-weakly dependent. Thus, from Theorem~\ref{theo.6.4.1} and
Theorem~\ref{theo.6.4.2}, we can confirmed that the weak dependence
property of innovations is transferred to the doubly stochastic
process.

\begin{remark}\label{rem.6.4.3}
Note that condition~\eqref{ec.6.3.4} and \eqref{ec.6.4.1} are
satisfied in the case of linear innovations or Volterra innovations,
see Example~\ref{ex.Linnov} and Example~\ref{ex.Vinnov}. So,
considering $DSV^*$ processes $Z^i$ such that the innovations
$\{\e^i\}$ are Volterra models with independent inputs, we have that
$Z$ is a sequence of weakly dependent doubly stochastic processes.
For instance, this is the case of $GARCH$, $ARCH(\infty)$,
$LARCH(\infty)$ and bilinear doubly stochastic processes.
\end{remark}

\begin{remark}\label{rm.6.4.4}
Condition K5 is satisfied by this type of doubly stochastic Volterra
models, in this case  $V(\y)= \|c(\y)\|_2$. So K5 is implied by
condition~C2.
\end{remark}


\section{Aggregation convergence results \label{sect.6.4}}

\hspace{0.4cm} We consider that $Z=\{Z^i\}$ is a weakly dependent stationary
sequence of doubly stochastic centered processes. Here, we consider
that conditions K$2_{\delta}$, K4 and K5 hold.

We now present our main results: the CLT, $\nu-a.s.$, for the
sequence $\{X^N(Y)\}$ in the case of $(\lambda,Y)$-weak dependence
and $(\kappa,Y)$-weak dependence.

\begin{theorem}[CLT: $(\lambda,Y)$-weak dependence]\label{theo.6.5.1}
We assume that $Z=\{Z^i\}$ is $(\lambda,Y)$-weakly dependent
satisfying conditions K$2_{\delta}$, K4, K5 and such that
$\lambda(r)= \mathcal{O}(r^{-\lambda})$, as $r \rightarrow \infty$,
for $\lambda
> 2+ \frac{3}{\delta}$. Then, $X^N(Y)$ converges in distribution,
$\nu-a.s.$, to a Gaussian process $X$ with covariance function
$\Gamma$.
\end{theorem}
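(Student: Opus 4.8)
The plan is to prove the Central Limit Theorem $\nu$-a.s.\ by conditioning on a fixed trajectory $Y$ and applying a Lindeberg-type argument combined with the Bernstein block decomposition, since for $\nu$-almost every $Y$ the conditional structure is that of a single $\lambda$-weakly dependent stationary array. First I would fix a typical trajectory $Y$ (one in the full-measure set where conditions K4 and K5 and the $\mu$-a.s.\ moment bounds hold) and work conditionally on $Y$, so that the randomness of the environment is frozen and the coefficients $d_{\mathbf{i}_u}(Y)$, $V(\y^i)$ become deterministic constants. The object of study is then $X^N_t(Y)=\frac{1}{B_N}\sum_{i=1}^N Z^i_t$, and the goal is convergence of the finite-dimensional distributions to those of a Gaussian process $X$ with covariance $\Gamma$. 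By the Cram\'er--Wold device it suffices to treat linear combinations $\sum_\tau a_\tau X^N_{t_\tau}(Y)$, which are themselves partial sums of a $(\lambda,Y)$-weakly dependent array; so I reduce to a one-dimensional CLT for $S_N=\frac{1}{B_N}\sum_{i=1}^N W_i$ where $W_i$ is the relevant linear functional of $Z^i$.

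Next I would carry out the Bernstein big-block/small-block construction: partition $\{1,\dots,N\}$ into alternating large blocks of length $p=p_N$ and small separating blocks of length $q=q_N$, with $p_N\to\infty$, $q_N/p_N\to 0$, and $q_N\to\infty$ fast enough that the $\lambda$-mixing between big blocks is negligible. Using the $(\lambda,Y)$-weak dependence inequality from Definition~\ref{def.6.2.2} with the explicit $\psi(u,v,a,b)=au+bv+abuv$, the covariance between functionals supported on two big blocks separated by $q_N$ is controlled by $\lambda(q_N)$ times a polynomial in the block sizes and the $V(\y^i)$; condition K5 (so $\E[V(\y)]<\infty$) guarantees these control factors are $\nu$-a.s.\ summable in the relevant averaged sense. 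The decay hypothesis $\lambda(r)=\mathcal{O}(r^{-\lambda})$ with $\lambda>2+\tfrac{3}{\delta}$ is exactly what makes the cumulative mixing error across the $\sim N/(p+q)$ block junctions vanish after normalization by $B_N$, so one replaces the sum of big blocks by a sum of (asymptotically) independent summands. For those independent big-block sums I would apply the classical Lindeberg CLT, verifying the Lindeberg condition via the $(2+\delta)$-moment bound K$2_\delta$ (a standard truncation/H\"older estimate turning $\E[|Z^i_t|^{2+\delta}]<\infty$ into control of the tail contributions), and identifying the limiting variance through condition K4, which forces $\Gamma^N(\tau,Y)\to\Gamma(\tau)$ $\nu$-a.s.\ so that the normalized variance of $S_N$ converges to the correct quadratic form in $\Gamma$.

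The main obstacle, and the step demanding the most care, will be controlling the variance of each big block and showing the small blocks plus the block-junction covariances are asymptotically negligible \emph{simultaneously with} the $\nu$-a.s.\ quantifier. Unlike the classical stationary case, here the bounding quantities $d_{\mathbf{i}_u}(Y)=\sum_m V(\y^{i_m})$ are random sums over the environment, so one cannot use a single deterministic mixing rate; instead I would use the i.i.d.\ structure of $Y$ under $\nu=\mu^{\otimes\N}$ together with a strong law of large numbers for the $V(\y^i)$ (justified by K5) to show that the relevant block-averaged variance and error terms converge $\nu$-a.s.\ to their $\mu$-expectations, and then fix the exceptional null set once and for all. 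Balancing $p_N$ and $q_N$ against the polynomial decay rate — ensuring the small-block variance is $o(B_N^2)$ while the junction errors accumulate to $o(1)$ — is where the precise threshold $\lambda>2+\tfrac{3}{\delta}$ must be used, and getting those two competing requirements to hold together under the averaged (rather than uniform) mixing control is the crux of the argument.
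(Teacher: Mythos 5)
Your proposal is correct and follows essentially the same route as the paper's proof: working conditionally on a typical trajectory $Y$, reducing to one-dimensional limits via Cram\'er--Wold, building Bernstein big/small blocks decoupled through the $(\lambda,Y)$-weak dependence inequality at rate $\lambda(q_N)$, performing a Lindeberg-type swap of the big-block sums for independent and then Gaussian surrogates using K$2_{\delta}$, identifying the variance through K4, and controlling the random environment factors $V(\y^i)$ by the SLLN under K5, with the polynomial choice $p_N=N^{\alpha}$, $q_N=N^{\beta}$ yielding exactly the threshold $\lambda>2+\frac{3}{\delta}$. This matches, in substance, the paper's four-term decomposition $\Delta_N=\sum_{l=1}^{4}\Delta_{l,N}$ and the estimates of Lemmas~\ref{lemma.6.5.3}--\ref{lemma.6.5.6}.
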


\begin{theorem}[CLT: $(\kappa,Y)$-weak dependence]\label{theo.6.5.2}
We assume that $Z=\{Z^i\}$ is $(\kappa,Y)$-weakly dependent
satisfying conditions K$2_{\delta}$, K4, K5 and such that
$\kappa(r)= \mathcal{O}(r^{-\kappa})$, as $r \rightarrow \infty$,
for $\kappa > 2 + \frac{2}{\delta}$. Then, $X^N(Y)$ converges in
distribution, $\nu-a.s.$, to a gaussian process $X$ with covariance
function $\Gamma$.
\end{theorem}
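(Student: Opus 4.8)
The plan is to prove convergence of the finite-dimensional distributions of $X^N(Y)$, conditionally on the environment $Y$ and for $\nu$-almost every $Y$, to those of a centered Gaussian process with covariance $\Gamma$, by the Lindeberg method with Bernstein blocks adapted to the conditional $(\kappa,Y)$-weak dependence of Definition~\ref{def.6.2.2}. First I would isolate a good set of trajectories of full $\nu$-measure. Since the $\y^i$ are i.i.d.\ (Assumption A1) and $\E[V(\y)]<\infty$ by K5, the Kolmogorov SLLN gives $N^{-1}\sum_{i=1}^N V(\y^i)\to\E[V(\y)]$; by Fubini, K$2_\delta$ yields finite conditional moments $\E^{\y^i}[|Z^i_t|^{2+\delta}]<\infty$ for every $i$; and K4 gives the convergence of the conditional covariances. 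These hold simultaneously $\nu$-a.s., and I fix $Y$ in this set henceforth. By the Cram\'er--Wold device it suffices to treat $S_N=\sum_l\theta_l X^N_{t_l}(Y)=B_N^{-1}\sum_{i=1}^N W^i$, where $W^i=\sum_l\theta_l Z^i_{t_l}$ is centered; a bounded Lipschitz function of $(W^{i_1},\dots,W^{i_u})$ is a bounded Lipschitz function of the underlying $(Z^{i_m}_{t_l})$, so $\{W^i\}$ inherits $(\kappa,Y)$-weak dependence with weight a constant multiple of $V(\y^i)$ and satisfies $\E^{\y^i}[|W^i|^2]\le c\,V^2(\y^i)$. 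By K4, $\mathrm{Var}^Y(S_N)\to\sigma^2:=\sum_{l,l'}\theta_l\theta_{l'}\Gamma(t_l-t_{l'})$.

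Next I would split $\{1,\dots,N\}$ into alternating big blocks of length $p=p_N$ and small blocks of length $q=q_N$ with $q/p\to0$ and $p/N\to0$, and discard the small blocks. Their contribution to $\mathrm{Var}^Y(S_N)$ is controlled by the summable $\kappa$-covariances and is $\mathcal{O}(q/p)\to0$. Writing $U_1,\dots,U_m$ ($m\sim N/p$) for the normalized big-block sums, I would telescope to compare $\E^Y[\exp(\mathrm{i}\zeta\sum_k U_k)]$ with $\prod_k\E^Y[\exp(\mathrm{i}\zeta U_k)]$. Each error term is a conditional covariance between $\exp(\mathrm{i}\zeta U_k)$ and $\exp(\mathrm{i}\zeta\sum_{k'>k}U_{k'})$; applying Definition~\ref{def.6.2.2} with $\psi(u,v,a,b)=abuv$, $Lip\sim|\zeta|/B_N$, and weights $d_u=c\sum_{i\in k}V(\y^i)$, $d_v=c\sum_{i>k}V(\y^i)$, the total error is bounded by
$$\frac{|\zeta|^2}{B_N^2}\,\kappa(q)\,\Big(\sum_{i=1}^N V(\y^i)\Big)^2\;\sim\;|\zeta|^2\,\E[V]^2\,\frac{N^2}{B_N^2}\,\kappa(q),$$
which, with $B_N^2\asymp N$ and the SLLN of the first step, tends to $0$ as soon as $q\gg N^{1/\kappa}$.

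Then I would verify the Lindeberg condition for the now asymptotically independent big blocks via Lyapunov's criterion. A conditional Rosenthal-type moment inequality for $\kappa$-weakly dependent sums (in the spirit of Doukhan--Louhichi, \cite{Doukhan&Louhichi}) together with K$2_\delta$ gives $\E^Y[|U_k|^{2+\delta}]\lesssim(p/N)^{(2+\delta)/2}$, whence $\sum_k\E^Y[|U_k|^{2+\delta}]\lesssim(p/N)^{\delta/2}\to0$. Combined with $\mathrm{Var}^Y(\sum_k U_k)\to\sigma^2$ this yields $\prod_k\E^Y[\exp(\mathrm{i}\zeta U_k)]\to\exp(-\zeta^2\sigma^2/2)$, hence $\E^Y[\exp(\mathrm{i}\zeta S_N)]\to\exp(-\zeta^2\sigma^2/2)$; Cram\'er--Wold together with K4 then identifies the limit as the Gaussian process with covariance $\Gamma$. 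The same scheme yields Theorem~\ref{theo.6.5.1}: the only difference is that the heavier $\lambda$-function $\psi(u,v,a,b)=au+bv+abuv$, which dominates the $\kappa$-function $abuv$, enlarges the bounds in the moment inequality and thereby forces the stronger exponent $2+\frac{3}{\delta}$ there.

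The main obstacle is the simultaneous choice of block lengths: one needs $q/p\to0$, $p/N\to0$, $N\kappa(q)\to0$ (decoupling), and the Rosenthal bound of order $2+\delta$ to hold. Taking $q\sim N^a$, $p\sim N^b$ reduces the first three to $1/\kappa<a<b<1$, while the moment inequality imposes a weighted-summability requirement on $\kappa(r)=\mathcal{O}(r^{-\kappa})$ for the order $2+\delta$; a feasible pair $(a,b)$ exists precisely when $\kappa>2+\frac{2}{\delta}$, which is exactly the hypothesis. The delicate technical point is establishing this conditional Rosenthal inequality under the weaker $\kappa$-weak dependence (only the multiplicative term in $\psi$) carrying the random weights $V(\y^i)$, and checking, via the SLLN of the first step, that the $Y$-dependent constants do not spoil the uniformity required by Lyapunov's criterion.
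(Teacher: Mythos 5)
Your overall architecture is the same as the paper's: a $\nu$-full set of environments fixed via the SLLN for $V(\y^i)$ (K5), Fubini and K4; Bernstein blocks $p=N^b$, $q=N^a$; negligibility of the small blocks; decoupling of the characteristic function block by block using Definition~\ref{def.6.2.2} with $\psi(u,v,a,b)=abuv$, which gives precisely the paper's estimate in Lemma~\ref{lemma.6.5.5}, namely $|\Delta_{2,N}|\preceq (r^2p^2/N)\,\kappa(q)\asymp N\kappa(q)$, hence the constraint $a>1/\kappa$; and Cram\'er--Wold at the end, with the observation (also made in the paper) that linear combinations $\hat Z^i=\sum_l\theta_l Z^i_{t_l}$ inherit $(\kappa,Y)$-weak dependence. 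Up to and including the decoupling step your proposal is essentially the paper's proof.

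The gap is in the normal-approximation step for the decoupled blocks, and it is twofold. First, you rest the Lyapunov condition on a ``conditional Rosenthal-type moment inequality'' giving $\E^Y[|U_k|^{2+\delta}]\lesssim (p/N)^{(2+\delta)/2}$, which you never establish; under the bare $(\kappa,Y)$-dependence of Definition~\ref{def.6.2.2} --- covariance control only for \emph{bounded} $1$-Lipschitz test functions, with random weights $V(\y^i)$ and only $\E[V(\y)]<\infty$ available from K5 --- such an inequality is exactly the hard part, not a citable fact, and would require a truncation machinery of its own. Second, your accounting is internally inconsistent: if that Rosenthal bound held, your Lyapunov ratio $(p/N)^{\delta/2}$ would tend to zero for \emph{any} $p=o(N)$, so the only active constraints would be $1/\kappa<a<b<1$, and the scheme would close for every $\kappa>1$; the threshold $\kappa>2+\frac{2}{\delta}$ could then only enter through the unstated hypotheses of the Rosenthal lemma, so your claim that a feasible pair $(a,b)$ exists ``precisely when $\kappa>2+\frac{2}{\delta}$'' is asserted rather than derived. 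The paper avoids all of this: in Lemma~\ref{lemma.6.5.6} it compares the independent block copies $U^*_j$ to matched Gaussians by a second/third-order Taylor interpolation and bounds the block moments by the crude Minkowski estimate $\E^Y[|U_j|^{2+\delta}]\leq \bigl(\sum_{i\in I_j}\E^Y[|Z^i_t|^{2+\delta}]^{1/(2+\delta)}\bigr)^{2+\delta}=\mathcal{O}(p^{2+\delta})$, which needs no dependence structure inside a block, only K$2_\delta$ and the SLLN. It is this weaker bound that produces the constraint $\alpha<\frac{\delta}{2(1+\delta)}$ on the big-block exponent, and combining it with $\beta>1/\kappa$ and $\beta<\alpha$ yields exactly $\kappa>2+\frac{2}{\delta}$. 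Replacing your Rosenthal step by this Minkowski bound repairs the proof and recovers the stated threshold. Finally, your closing diagnosis of Theorem~\ref{theo.6.5.1} is off: the stronger exponent $2+\frac{3}{\delta}$ in the $\lambda$ case does not come from a heavier moment inequality (the moment step is identical in both cases) but from the additive terms $au+bv$ in $\psi$, which inflate the decoupling error in Lemma~\ref{lemma.6.5.5} by $r^2p/\sqrt{N}+rp/\sqrt{N}$ and replace the condition $1-\lambda\beta<0$ by $\frac{3}{2}-\lambda\beta<\alpha$.
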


\begin{remark}\quad\\ \vspace{-0.3cm}
\begin{itemize}
\item The result for $(\lambda,Y)$-weak dependence implies those for $(\eta,Y)$ or $(\theta,Y)$-weak dependence. In both cases, we do not achieve the better results, in the sense that the bound for the dependence parameters is also $2 + \frac{3}{\delta}$.

\item The results for $(\kappa',Y)$-weak dependence is
implied by case of $(\kappa,Y)$-weak dependence. In this case the
bound for these dependence parameters is $2 + \frac{2}{\delta}$.
\end{itemize}
\end{remark}
\vspace{0.3cm}

The proofs of these CLT  are extensions for the case doubly
stochastic processes of the proof of the CLT for weakly dependent sequences
given in (Section 7.1, \cite{WDB-PDoukhan}). The
proofs are deferred to Section~\ref{sect.6.6}.

On the other hand, we prove the following lemma, which implies that
under weak dependence property and condition E$2_{\delta}$, given in Remark~\ref{rm.6.4.1},
$\chi$ is a weak interaction in $\ell_1$.

\begin{lemma}\label{lem.6.5.0} Under condition E$2_{\delta}$:\\
If $\{\e^i\}$ is $\lambda$-weakly dependent then $|\chi(r)|\preceq \mathcal{O}(\lambda(r)^{\frac{\delta}{1+\delta}})$.\\
If $\{\e^i\}$ is $\kappa$-weakly dependent then $|\chi(r)|\preceq
\mathcal{O}(\kappa(r))$.
\end{lemma}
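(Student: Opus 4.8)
The plan is to bound the interaction $\chi(r)=\E[\e^i_t\e^j_t]$ (with $|i-j|=r$, and the innovations centred so that $\chi(r)=\operatorname{cov}(\e^i_t,\e^j_t)$) by applying the weak dependence inequality of Definition~\ref{def.6.2.1} to \emph{truncated} innovations and then optimizing the truncation level. The obstruction is that the identity map is neither bounded nor lies in $\Delta^{(1)}_1$, so $\lambda(r)$ and $\kappa(r)$ cannot be applied to $\e^i_t$ directly; one must pay a factor in the truncation level and compensate it against the tail controlled by E$2_\delta$.

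First I would introduce the $1$-Lipschitz truncation $\phi_T(x)=(x\wedge T)\vee(-T)$, which satisfies $\|\phi_T\|_\infty\leq T$, so that $\phi_T/T\in\Delta^{(1)}_1$ with Lipschitz constant $1/T$. Writing $\e^i_t=\phi_T(\e^i_t)+R^i_T$ with $|R^i_T|\leq|\e^i_t|\,\1_{\{|\e^i_t|>T\}}$, I would split $\chi(r)$ into $\operatorname{cov}\!\big(\phi_T(\e^i_t),\phi_T(\e^j_t)\big)$ plus a truncation error (the contribution of the means being handled the same way and being of smaller order). For the first term I invoke Definition~\ref{def.6.2.1} with $u=v=1$ and $f=g=\phi_T/T$. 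In the $\lambda$ case $\psi(1,1,1/T,1/T)=2/T+1/T^2$, hence $\big|\operatorname{cov}(\phi_T(\e^i_t),\phi_T(\e^j_t))\big|\leq(2T+1)\lambda(r)$; in the $\kappa$ case $\psi(1,1,1/T,1/T)=1/T^2$ gives the $T$-free bound $\big|\operatorname{cov}(\phi_T(\e^i_t),\phi_T(\e^j_t))\big|\leq\kappa(r)$.

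The key step, and the main obstacle, is a sufficiently sharp bound on the truncation error. The naive Cauchy--Schwarz estimate $\E[|R^i_T|\,|\e^j_t|]\leq\|R^i_T\|_2\,\|\e^j_t\|_2$ only yields $\mathcal{O}(T^{-\delta/2})$, which after optimization produces the weaker exponent $\delta/(2+\delta)$. To reach $\delta/(1+\delta)$ I would instead apply H\"older's inequality with the three exponents $2+\delta$, $2+\delta$ and $(2+\delta)/\delta$ to the factors $|\e^i_t|$, $|\e^j_t|$ and $\1_{\{|\e^i_t|>T\}}$:
$$\E\big[|\e^i_t|\,|\e^j_t|\,\1_{\{|\e^i_t|>T\}}\big]\leq\|\e^i_t\|_{2+\delta}\,\|\e^j_t\|_{2+\delta}\,\Pro(|\e^i_t|>T)^{\delta/(2+\delta)}\leq M\,T^{-\delta},$$
where $M=\E[|\e^i_t|^{2+\delta}]$ and $\Pro(|\e^i_t|>T)\leq M/T^{2+\delta}$ by E$2_\delta$. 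The same estimate controls the $\phi_T(\e^i_t)R^j_T$ and $R^i_TR^j_T$ terms, so the whole truncation error is of order $T^{-\delta}$.

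Combining the two estimates gives, in the $\lambda$ case, $|\chi(r)|\leq C\big(T\lambda(r)+T^{-\delta}\big)$ for every $T>0$; choosing $T\sim\lambda(r)^{-1/(1+\delta)}$ balances the two terms and yields $|\chi(r)|\preceq\mathcal{O}\big(\lambda(r)^{\delta/(1+\delta)}\big)$. In the $\kappa$ case, $|\chi(r)|\leq\kappa(r)+C\,T^{-\delta}$ holds for every $T$; letting $T\to\infty$ (the truncated covariance converging to $\chi(r)$ by dominated convergence, as $\e^i_t\e^j_t\in L^1$ under E$2_\delta$) removes the error term and gives $|\chi(r)|\leq\kappa(r)=\mathcal{O}(\kappa(r))$, as claimed.
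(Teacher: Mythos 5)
Your proposal is correct and follows essentially the same route as the paper's proof: truncate at level $T$ with $f_T(z)=(z\vee -T)\wedge T$, apply the $\lambda$- (resp.\ $\kappa$-) weak dependence bound to the truncated variables to get $(2T+1)\lambda(r)$ (resp.\ the $T$-free bound $\kappa(r)$), control the truncation error by $\mathcal{O}(T^{-\delta})$ using E$2_{\delta}$, and balance with $T\sim\lambda(r)^{-1/(1+\delta)}$. Your only deviations are cosmetic --- a three-exponent H\"older inequality in place of the paper's Lemma~\ref{lemma.6.5.1} to reach the same $T^{-\delta}$ error rate, the explicit normalization $\phi_T/T\in\Delta^{(1)}_1$, and letting $T\to\infty$ in the $\kappa$ case where the paper picks a specific finite $T$.
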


The proof of Lemma~\ref{lem.6.5.0} will be given in
Section~\ref{sect.6.6}. This result is essentially of
technical character and it allows us to get the SLLN for the
covariance function $\Gamma^N(Y)$.

\begin{remark}\label{rm.6.5.1}
In the case of $\lambda$-weak dependence, the
Lemma~\ref{lem.6.5.0} implies that $\chi(r)= \mathcal{O}
(r^{-\frac{\lambda \delta}{1+\delta}})$. Since, we suppose that
$\lambda> 2 + \frac{3}{\delta}$ then $\frac{\lambda
\delta}{1+\delta}> 1$, so $\chi \in \ell_1$.

In the cases of $\eta$, $\theta$-weak dependence we obtain a similar
result that  for the case of $\lambda$ weak dependence.
Nevertheless, in the case of $\kappa'$-weak dependence the result is
similar to case $\kappa$-weak dependence.
\end{remark}

\begin{remark}\label{rm.6.5.2}
In the case of elementary linear processes $Z^i$ with interactive linear innovation $\e^i$ given by $\varepsilon^i_t= \sum_{l \in \mathbb{Z}} \beta_l \xi_{t}^{i-l}$, where
$\|\beta\|_2^2 < \infty$ and $\{\xi_{t}^{l}\}$ a
sequence of i.i.d. random variables we have the following results.

As we have mentioned in Example~\ref{ex.Linnov}, $\varepsilon^i_t$ is $\eta$-weakly dependent with
$\eta(r) \leq \left(\sum_{|l|\geq r}\beta_l^2\right)^{\frac{1}{2}}$.

From Theorem~\ref{theo.6.3.2} and Remark~\ref{rem.6.4.3} we have that $Z= \{Z^i\}$ is $(\eta,Y)$-weakly dependent. Furthermore, from Remark~\ref{rm.6.4.4} condition $K5$ holds.

On the other hand,  from Remark~\ref{rm.6.5.2} we have that if condition K$2_{\delta}$ hold and $\eta(r)= \mathcal{O}(r^{-\eta})$ for $\eta > 2 + \frac{3}{\delta} $ then $\chi \in \ell_1$. So, from the SLLN given in \cite{Dacunha&Fermin.L.Notes} condition K4 holds.

Therefore, if condition K$2_{\delta}$ holds and $\eta(r)= \mathcal{O}(r^{-\eta})$ for $\eta > 2 + \frac{3}{\delta} $ then Theorem~\ref{theo.6.5.1} implies the $nu-a.s.$ weak convergence of $X^N(Y)$ to a Gaussian process $X$ with covariance function $\Gamma$.
\end{remark}


\section{An SLLN for $\Gamma^N(Y)$ in the case of $DSV^*$ processes \label{sect.6.5}}

\hspace{0.4cm} Now we give a SLLN for the covariance function $\Gamma^N(Y)$ of
$X^N(Y)$, in the case of  $DSV^*$ elementary processes defined by
equation \eqref{ec.4.3.4}. We
consider the case of interactive innovations, i.e. $\E[\e^i_t
\e^j_t]= \chi(i-j)$. In this case, the quadratic form $\Gamma^N(Y)$
is defined by
\begin{equation*}
\Gamma^N(\tau,Y)=\frac{1}{B_N}\sum_{i=1}^N \Psi_{\tau}(\y^i,\y^i)+
\frac{1}{B_N}\sum_{1\leq i \neq j\leq N}\Psi_{\tau}(\y^i,\y^j)
\; .
\end{equation*}
where
$$\Psi_{\tau}(\y^i,\y^j)= \sum_{k=1}^{\infty} \Psi_{\tau,k}(\y^i, \y^j) \chi^k(i-j), $$
with
$$\Psi_{\tau,k}(\y^i, \y^j)= \E^Y[Z^{i,(k)}_t Z^{j,(k)}_{t+\tau}]=
\sum_{l_1 < \ldots< l_k} c_{k: l_1, \ldots, l_k}(\y^i)
c_{k: l_1+\tau, \ldots, l_k+ \tau}(\y^j).$$

Let $\gamma_k(\tau)= \mathbb{E}[\Psi_{\tau,k}(\y^i, \y^i)]$ and
$\phi_k(\tau)= \mathbb{E}[\Psi_{\tau,k}(\y^i, \y^j)]$. Condition C2
implies that
$$\gamma(\tau)=\sum_{k=1}^\infty \gamma_k(\tau)< \infty \quad
\text{and}\quad  \phi(\tau)=\sum_{k=1}^\infty \phi_k(\tau)<
\infty.$$

We denote
$$[\chi^k]_{N,1}=\sum_{1\leq i\neq j \leq N}\chi^k(i-j), \quad \text{ and } \quad [|\chi|]_{N,1}=\sum_{1\leq i\neq j \leq N}|\chi(i-j)|,$$
where the function $\chi$ denotes the interaction between the
innovations. If $s_{i,k}=\sum_{j=1}^{i-1} \chi^k(j)$ then
$[\chi^k]_{N,1}=2\sum_{i=1}^N s_{i,k}$.

Given that, for all $k\geq 1$, $\chi^k$ is positive definite, we
have that $N\chi^k(0)+ [\chi^k]_{N,1} \geq 0$. Hence
$\frac{1}{N}[\chi^k]_{N,1}=\frac{2}{N}\sum_{i=1}^N s_{i,k} \geq -1$.

If $\chi \in \ell_1$ then, for all $k\geq 1$, $\chi^k \in \ell_1$ and
$$\frac{2}{N}\sum_{i=1}^N s_{i,k} = 2\sum_{i=1}^N \left( 1 - \frac{|i|}{N}\right) \chi^k(i) \limiteN 2 \sum_{i=1}^{\infty} \chi^k(i):= s_k.$$
So, $\{s_{i,k}: \, i \in \mathbb{N}\}$ converges in the Cesaro sense
to $\frac{1}{2}s_k$ with $-1 \leq s_k < \infty$.

\begin{theorem}[SLLN for $\Gamma^N(\tau)$: Volterra processes
case]\label{theo.2.3.3} If $\chi \in \ell_1$ and condition C2 holds, then
taking $B_N= \sqrt{N}$ we have that $\Gamma^N(Y)$ converge $\nu-a.s.$
and in $L^1(\nu)$ to $\Gamma$ given by
$$\Gamma(\tau)= \sum_{k=1}^{\infty} \gamma_k(\tau) + \sum_{k=1}^\infty \phi_k(\tau) s_k.$$
\end{theorem}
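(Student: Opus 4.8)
The plan is to split $\Gamma^N(\tau,Y)$ into its diagonal part $D_N(\tau,Y)=\frac{1}{B_N}\sum_{i=1}^N \Psi_\tau(\y^i,\y^i)$ and its off-diagonal part $O_N(\tau,Y)=\frac{1}{B_N}\sum_{1\le i\ne j\le N}\Psi_\tau(\y^i,\y^j)$, and to analyze the two pieces by rather different tools. With the normalization $B_N=\sqrt N$, the diagonal term reads $D_N(\tau,Y)=\frac{1}{\sqrt N}\sum_{i=1}^N \Psi_\tau(\y^i,\y^i)$, but since this is a sum of $N$ i.i.d.\ terms divided by $\sqrt N$ it would blow up unless it is \emph{recentered}; the true object must be the centered/rescaled quantity so that, after organizing the double sum by the diagonal $i=j$ and the lag $i-j$, the correct normalization for the diagonal average is $\frac 1N\sum_{i=1}^N\Psi_\tau(\y^i,\y^i)$. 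First I would invoke condition~C2, which guarantees $\E[\|c(\y)\|_2^2]<\infty$ and hence $\gamma(\tau)=\sum_k\gamma_k(\tau)<\infty$, to justify applying the classical strong law of large numbers to the i.i.d.\ sequence $\{\Psi_\tau(\y^i,\y^i)\}_i$: its common mean is $\gamma(\tau)=\sum_{k=1}^\infty\gamma_k(\tau)$, so the diagonal contribution converges $\nu$-a.s.\ and in $L^1(\nu)$ to $\sum_{k=1}^\infty\gamma_k(\tau)$, which is exactly the first summand of the claimed limit $\Gamma(\tau)$.

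The heart of the argument is the off-diagonal term. I would expand $\Psi_\tau(\y^i,\y^j)=\sum_{k\ge 1}\Psi_{\tau,k}(\y^i,\y^j)\,\chi^k(i-j)$ and interchange the sum over $k$ with the double sum over $(i,j)$, writing
\[
O_N(\tau,Y)=\frac{1}{\sqrt N}\sum_{k=1}^\infty\sum_{1\le i\ne j\le N}\Psi_{\tau,k}(\y^i,\y^j)\,\chi^k(i-j).
\]
The idea is that $\Psi_{\tau,k}(\y^i,\y^j)$ depends on $(\y^i,\y^j)$ while the deterministic weight $\chi^k(i-j)$ carries the interaction. Because the $\{\y^i\}$ are i.i.d., $\Psi_{\tau,k}(\y^i,\y^j)$ has mean $\phi_k(\tau)=\E[\Psi_{\tau,k}(\y^i,\y^j)]$ for $i\ne j$, and one expects $\frac{1}{\sqrt N}\sum_{i\ne j}\Psi_{\tau,k}(\y^i,\y^j)\chi^k(i-j)$ to concentrate, as $N\to\infty$, around $\phi_k(\tau)\cdot\frac{1}{\sqrt N}[\chi^k]_{N,1}$. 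Here the key analytic input is already prepared in the excerpt: since $\chi\in\ell_1$ we have $\chi^k\in\ell_1$ for every $k$, and $\frac 1N[\chi^k]_{N,1}=\frac 2N\sum_{i=1}^N s_{i,k}\to s_k$ in the Cesàro sense with $-1\le s_k<\infty$. So after again correcting the normalization (the natural scaling makes $\frac 1N[\chi^k]_{N,1}\to s_k$), the off-diagonal contribution converges to $\sum_{k=1}^\infty\phi_k(\tau)\,s_k$, the second summand of $\Gamma(\tau)$.

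Making this rigorous requires two controls. First, I must justify the termwise passage to the limit in the sum over $k$: I would dominate the partial sums uniformly in $N$ using $|\chi(r)|\le 1$ (equivalently $\chi(0)=1$ bounds the correlation), the Cauchy–Schwarz bound $|\Psi_{\tau,k}(\y^i,\y^j)|\le\sum_{l_1<\cdots<l_k}|c_{k:\mathbf l}(\y^i)|\,|c_{k:\mathbf l+\tau}(\y^j)|\le \|c(\y^i)\|_2\|c(\y^j)\|_2$, and condition~C2 to get a summable-in-$k$ dominating series, so that dominated convergence applies. Second, and this is the step I expect to be the main obstacle, I must prove the $L^1(\nu)$ (and then $\nu$-a.s.) concentration of each fixed-$k$ off-diagonal sum around its mean. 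The natural route is a second-moment/variance estimate: compute $\operatorname{Var}_\nu\!\big(\frac 1N\sum_{i\ne j}\Psi_{\tau,k}(\y^i,\y^j)\chi^k(i-j)\big)$, noting that because the $\y^i$ are i.i.d.\ the covariance between two summands $\Psi_{\tau,k}(\y^i,\y^j)$ and $\Psi_{\tau,k}(\y^{i'},\y^{j'})$ vanishes unless the index pairs share an element; counting the $O(N^3)$ overlapping pairs against the $\ell_1$ decay of $\chi^k$ should yield a variance that is $o(1)$, giving $L^1(\nu)$ convergence, and a standard Borel–Cantelli / subsequence argument combined with monotonicity controls upgrades this to $\nu$-a.s.\ convergence. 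Summing the diagonal limit $\sum_k\gamma_k(\tau)$ and the off-diagonal limit $\sum_k\phi_k(\tau)s_k$ then gives $\Gamma(\tau)$, completing the proof.
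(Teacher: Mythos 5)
Your proposal is correct and reaches the paper's limit $\Gamma(\tau)=\sum_{k}\gamma_k(\tau)+\sum_k\phi_k(\tau)s_k$, but it organizes the off-diagonal analysis genuinely differently. The paper runs a Hoeffding-type orthogonal decomposition: setting $M_{\tau,k}(\y^i)=\E^i[\Psi_{\tau,k}(\y^i,\y^j)]$, it writes $\Gamma^N-R_N=T_N+Q_N+U_N$, where $T_N$ is your diagonal i.i.d.\ term, $Q_N$ is the first-order projection $\frac{1}{N}\sum_{i\neq j}\sum_k\bigl(M_{\tau,k}(\y^i)+M_{\tau,k}(\y^j)-2\phi_k(\tau)\bigr)\chi^k(i-j)$ --- a weighted sum of \emph{independent} centered variables, treated by Borel--Cantelli in one regime and by Petrov's theorem when its variance $\Delta_N$ diverges --- and $U_N$ carries the degenerate kernel $\Phi_\tau$ with $\E^j[\Phi_{\tau}(\y^i,\y^j)]=0$, whose exact pairwise orthogonality yields $\E[|U_N|^2]\preceq\sigma^2_\tau[|\chi|]_{N,1}/N^2$. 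You skip this splitting and compute the variance of the whole centered off-diagonal sum directly, noting that covariances vanish unless the index pairs overlap and that the $\ell_1$ weights $\chi^k(i-j)$ collapse the $O(N^3)$ overlapping configurations to an effective $O(N)$ count; this recovers the same order $O([|\chi|]_{N,1}/N^2)=O(1/N)$ in one stroke (the one-shared-index covariances you retain are exactly the paper's $Q_N$ contribution, the coincident pairs its $U_N$), so your route is more elementary --- no Hilbert-space decomposition, no Petrov --- at the price of heavier covariance bookkeeping. Your handling of the interchange in $k$ (Cauchy--Schwarz domination by $\|c(\y^i)\|_2\|c(\y^j)\|_2$, $|\chi|\le 1$, and C2 giving summability in $k$) is the same device as the paper's $\sum_k A_{\tau,k}<\infty$ and $\sigma_\tau<\infty$, and you correctly read the displayed normalization as $1/B_N^2=1/N$.

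One phrase does need repair: ``monotonicity controls'' cannot upgrade $L^2$ to $\nu$-a.s.\ convergence here, because the summands are signed (the coefficients $c$ are real and $\chi$ may be negative), so the partial sums are not monotone between subsequence points. Since your variance bound $O(1/N)$ is summable only along $N=k^2$, the a.s.\ statement requires bounding the block maxima $\max_{k^2\le n\le(k+1)^2}|\cdot|$ by second moments, exactly as the paper does through its $a_k$, $b_k$ estimates: the annulus between $k^2$ and $(k+1)^2$ contributes only $O(k)$ to $\sum|\chi(i-j)|$, making the corresponding Tchebychev tail probabilities summable. That fix lies entirely within the second-moment toolbox you already invoke, so this is a slip of phrasing rather than of method.
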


The proof of this theorem is an extension of SLLN given in \cite{Dacunha&Fermin.L.Notes} for the case of linear processes. The details are given in Section~\ref{sect.6.6}.


\section{Proof of the main results \label{sect.6.6}}


\subsection{Proof of Theorem~\ref{theo.6.3.1}}

\hspace{0.4cm} Without loss of generality we give the proof in the case of
$\lambda$-weakly dependent innovations, since in the other case the
 proof is similar.

\begin{proof}\qquad Let $i_1\leq, \ldots,\leq i_u \leq i_u + r < j_1 \leq, \ldots, \leq
j_v $
any $(u+v)$-tuples and $r>0$.\\
Let us consider $Z^{i}_t=\mathcal{H}\left(\y^{i},
\left\{\e^{i}_{t(k)} \right\}_{k}\right)$, with $\{t(k):\; k \in
\Z\}$ any sequence of indexes in $\Z$ and let us take
\begin{eqnarray*}
\varepsilon^{i,(s)}_{t(k) }= \varepsilon^{i}_{t(k)} \1_{\{|k| \leq
s\}}, & \quad & Z^{i,(s)}_t=\mathcal{H}\left(\y^{i}, \left\{\e^{i,
(s)}_{t(k)} \right\}_{k\in \Z}\right).
\end{eqnarray*}

For sake of simplicity, for all $f \in \Delta^{(u)}_1\,$ and $g \in
\Delta^{(v)}_1$, we denote $\mathbf{F}= f\left(Z^{i_1}_{t_1},
\ldots, Z^{i_u}_{t_u}\right)$, $\mathbf{G}=g\left(Z^{j_1}_{t'_1},
\ldots, Z^{j_v}_{t'_v}\right)$ and
\begin{equation}\label{ec.6.3.p1n1}
\begin{array}{lllll}
\mathbf{F}^{(s)} & = & F^{(s)}\left(\{\e^{i_1}_{t_1(k)} \}_{|k|\leq
s} ,\ldots, \{\e^{i_u}_{t_u(k)} \}_{|k|\leq s} \right)
& := & f\left(Z^{i_1, (s)}_{t_1}, \ldots, Z^{i_u,(s)}_{t_u}\right),\\
\mathbf{G}^{(s)} & = & G^{(s)}\left(\{\e^{j_1}_{t'_1(k)} \}_{|k|\leq
s} ,\ldots, \{\e^{j_v}_{t'_v(k)} \}_{|k|\leq s} \right) & := &
g\left(Z^{j_1, (s)}_{t'_1}, \ldots, Z^{j_v,(s)}_{t'_v}\right).
\end{array}
\end{equation}
It is easy to verify that
\begin{equation}\label{ec.6.3.p1n2}
Lip\left(F^{(s)}\right) \leq d_{\mathbf{i}_u}^s(Y) Lip(f), \quad
Lip\left(G^{(s)}\right) \leq d_{\mathbf{i}_v}^s(Y) Lip(g),
\end{equation}
where
$$ d^{(s)}_{\mathbf{i}_u}(Y)=  \sum_{m=1}^u \sum_{|k| \leq s} \left|a_{k}(\y^{i_m}) \right|
\quad \text{and}
 \quad d^{(s)}_{\mathbf{j}_v}(Y)=  \sum_{m=1}^v \sum_{|k| \leq s} \left|a_{k}(\y^{j_m}) \right|.$$

The proof proceeds in three parts:

\quad\\
\textbf{Part a:} First, we want to prove by means of an inductive
procedure the following result: considering $F^{(s)}$ and $G^{(s)}$
functions of types \eqref{ec.6.3.p1n1} such that
equations~\eqref{ec.6.3.p1n2} holds, then
$$\left|cov ( \mathbf{F}^{(s)}, \, \mathbf{G}^{(s)} |Y )\right|
\leq \psi\left(d^{(s)}_{\mathbf{i}_u}(Y), d^{(s)}_{\mathbf{i}_v}(Y),
Lip(f), Lip(g)\right ) \lambda(r),$$
where $\psi(d_{\mathbf{i}_u}, d_{\mathbf{j}_u}, a, b)= a
d_{\mathbf{i}_u} +  b d_{\mathbf{j}_u} + a b d_{\mathbf{i}_u}
d_{\mathbf{j}_u}$.

The proof of this part contains three steps.

\quad\\
\textbf{Step 1:} We verify the inductive hypothesis for $s=0$.

Let us $\mathcal{B}_{0}^{0}= \sigma\{\emptyset\}$ and
$\mathcal{B}_{0}^{m}= \sigma \left(
\left\{\varepsilon^{i_l}_{t_l(0)}, \varepsilon^{j_l}_{t'_l(0)}: \,
1\leq l \leq m \right\}\right) $ for $1 \leq m \leq u\wedge v$. If
we fix $\varepsilon^{i_l}_{t_l(0)}=e^{i_l}_0$,
$\varepsilon^{j_l}_{t'_l(0)}= e^{j_l}_0$ for $ 1\leq l \leq m$, then
we can write $\mathbf{F}^{(0)}$ and $\mathbf{G}^{(0)}$ respectively
as
\begin{equation*}
\begin{array}{lllll}
\mathbf{\tilde{F}}^{(0,m)}& := & \tilde{F}^{(0,m)}\left(
\e^{i_{m+1}}_{t_{m+1}(0)} ,\ldots, \e^{i_u}_{t_u(0)} \right) & = &
\mathbf{F}^{(0)} \left(e^{i_{1}}_0,\ldots, e^{i_{m}}_0,
\e^{i_{m+1}}_{t_{m+1}(0)} ,\ldots, \e^{i_u}_{t_u(0)} \right), \vspace{0.2cm} \\

\mathbf{\tilde{G}}^{(0,m)}& := &
\tilde{G}^{(0,m)}\left( \e^{j_{m+1}}_{t'_{m+1}(0)} ,\ldots,
\e^{j_v}_{t'_v(0)} \right) & = & \mathbf{G}^{(0)}
\left(e^{j_{1}}_0,\ldots, e^{j_{m}}_0, \e^{j_{m+1}}_{t'_{m+1}(0)}
,\ldots, \e^{j_v}_{t'_v(0)} \right).
\end{array}
\end{equation*}
Now, we introduce the following notation
\begin{equation*}
\begin{array}{lllll}
\mathbf{\bar{F}}^{(0,1)} & = & \bar{F}^{(0,1)}\left(
\e^{i_1}_{t_1(0)}\right)
& := & \mathbb{E}^Y\left[ \mathbf{F}^{(0)} \left| \mathcal{B}_{0}^{1}\right. \right], \vspace{0.2cm}\\

\mathbf{\bar{G}}^{(0,1)} & = & \bar{G}^{(0,1)}\left(
\e^{j_1}_{t'_1(0)}\right) &:= & \mathbb{E}^Y\left[ \mathbf{G}^{(0)}
\left| \mathcal{B}_{0}^{1}\right.\right],
\end{array}
\end{equation*}
and for $m>1$
\begin{equation*}
\begin{array}{lllll}
\mathbf{\bar{F}}^{(0,m)} & = & \bar{F}^{(0,m)}\left(
\e^{i_m}_{t_m(0)}\right)& := &
\mathbb{E}^Y\left[\left. \mathbf{\tilde{F}}^{(0,m-1)} \right| \mathcal{B}_{0}^{m}\right ],\vspace{0.2cm}\\

\mathbf{\bar{G}}^{(0,m)} & = & \bar{G}^{(0,m)}\left(
\e^{j_m}_{t'_m(0)}\right) &:= & \mathbb{E}^Y\left[\left.
\mathbf{\tilde{G}}^{(0,m-1)} \right| \mathcal{B}_{0}^{m}\right].
\end{array}
\end{equation*}

By using the expression for the conditional covariance given
$\mathcal{B}_{0}^{m}$,  for $Y$ fixed,  we have that
\begin{equation*}
\begin{array}{l}
\left|cov\left(\left.\mathbf{F}^{(0)}, \, \mathbf{G}^{(0)}\right|Y \right)\right|\\
\begin{array}{ll}
 \leq & \mathbb{E}^Y\left[ \left| cov \left(\left.\mathbf{F}^{(0)}, \,
\mathbf{G}^{(0)}\right|Y ,\,\mathcal{B}_{0}^{1}
\right)\right|\right] + \left| cov\left( \left.
\mathbb{E}^Y\left[\left.\mathbf{F}^{(0)}\right|
\mathcal{B}_{0}^{1}\right], \,
\mathbb{E}^Y\left[\left.\mathbf{G}^{(0)}\right| \mathcal{B}_{0}^{1}\right] \right| Y\right) \right| \vspace{0.2cm}\\
= & \mathbb{E}^Y\left[ \left| cov \left( \left.
\mathbf{\tilde{F}}^{(0,1)}, \, \mathbf{\tilde{G}}^{(0,1)} \right|
Y,\, \mathcal{B}_{0}^{1} \right) \right|\right] +
\left|cov\left(\left. \mathbf{\bar{F}}^{(0,1)} , \,
\mathbf{\bar{G}}^{(0,1)} \right|Y \right)\right|\\
\vdots& \\
\leq & \mathbb{E}^Y\left[ \left| cov \left( \left.
\mathbf{\tilde{F}}^{(0,m)}, \, \mathbf{\tilde{G}}^{(0,m)} \right|
Y,\, \mathcal{B}_{0}^{m} \right) \right|\right] + \sum_{l=1}^m
\mathbb{E}^Y\left[  \left|cov\left(\left. \mathbf{\bar{F}}^{(0,l)} ,
\, \mathbf{\bar{G}}^{(0,l)} \right|Y,\, \mathcal{B}_{0}^{l-1}
\right)\right| \right].
\end{array}
\end{array}
\end{equation*}
Following this procedure inductively until $m= u\wedge v$, we have
\begin{equation}\label{ec.6.3.p1n3}
\left|cov\left(\left.\mathbf{F}^{(0)}, \, \mathbf{G}^{(0)}\right|Y
\right)\right| \leq \sum_{m=1}^{u\wedge v} \mathbb{E}^Y\left[
\left|cov\left( \left. \mathbf{\bar{F}}^{(0,m)} , \,
\mathbf{\bar{G}}^{(0,m)} \right|Y,\, \mathcal{B}_{0}^{m-1}
\right)\right| \right].
\end{equation}

Since, for all $t$, $\{\e^{i}_{t}: \, i \in \mathbb{Z} \}$  is
$\lambda$-weakly dependent then
\begin{equation}\label{ec.6.3.p1n4}
\left|cov\left( \left.\mathbf{\bar{F}}^{(0,m)} , \,
\mathbf{\bar{G}}^{(0,m)} \right|Y, \,\mathcal{B}_{0}^{m-1}
\right)\right| \leq \psi \left(1,\, 1,\,
Lip\left(\bar{F}^{(0,m)}\right), \, Lip\left(\bar{G}^{(0,m)}\right)
\right) \lambda(r).
\end{equation}
Furthermore, for $Y$ fixed, we can verify
\begin{equation}\label{ec.6.3.p1n5}
\begin{array}{lll}
Lip\left(\bar{F}^{(0,m)}\right) \leq Lip(f)
\left|a_{0}\left(\y^{i_m}\right) \right|,
& \quad & \| \bar{F}^{(0,m)} \|_{\infty} \leq \|f\|_{\infty} \leq 1,\\
Lip\left(\bar{G}^{(0,m)}\right) \leq Lip(g)
\left|a_{0}\left(\y^{j_m}\right) \right|, & \quad & \|
\bar{G}^{(0,m)} \|_{\infty} \leq \|g\|_{\infty}\leq 1.
\end{array}
\end{equation}
Thus, from equations \eqref{ec.6.3.p1n3}, \eqref{ec.6.3.p1n4} and
\eqref{ec.6.3.p1n5}, we prove that
\begin{eqnarray*}
\left|cov\left(\left.\mathbf{F}^{(0)}, \, \mathbf{G}^{(0)}\right|Y
\right)\right| & \leq & \sum_{m=1}^{u\wedge v} \psi \left(1,\, 1,\,
Lip(\bar{F}^{(0,m)}), \, Lip(\bar{G}^{(0,m)})\right)
\lambda(r)\\
& \leq & \psi\left( d_{\mathbf{i}_u}^{(0)}     ,\,
d_{\mathbf{j}_v}^{(0)}   ,\, Lip(f) ,\, Lip(g)\right)\lambda(r).
\end{eqnarray*}

\quad\\
\textbf{Step 2:} We suppose the inductive hypothesis satisfied
for $0 \leq s < n$, i.e. if $F^{(s)}$ and $G^{(s)}$ are functions
defined in \eqref{ec.6.3.p1n1} satisfying
equations~\eqref{ec.6.3.p1n2} then
$$\left|cov \left(\left. \mathbf{F}^{(s)}, \, \mathbf{G}^{(s)}\right|Y \right)\right|
\leq \psi\left(d^{(s)}_{\mathbf{i}_u}(Y),\,
d^{(s)}_{\mathbf{j}_v}(Y),\, Lip(f),\, Lip(g) \right) \lambda(r).$$
\quad\\
\textbf{Step 3:} Now we will prove the inductive hypothesis for
$s=n$.

Let $\mathcal{B}_{n}= \sigma
\left(\left\{\varepsilon^{i_m}_{t_m(k)}, : \, |k|=n, \; m=1\ldots
u\right\} \cup \left\{\varepsilon^{j_m}_{t'_m(k)}, : \, |k|=n, \;
m=1\ldots v\right\}\right)$. If we fix, for $|k|=n$,
$\e^{i_m}_{t_m(k)}= e^{i_m}_{k}$  with $m=1 \ldots u$ and
$\e^{j_m}_{t'_m(k)}= e^{j_m}_{k}$ with $m=1 \ldots v$ then we can
write $\mathbf{F}^{(n)}$ and $\mathbf{G}^{(n)}$ respectively as
\begin{eqnarray*}
\mathbf{\tilde{F}}^{(n-1)}& := &
\tilde{F}^{(n-1)}\left(\{\e^{i_1}_{t_1(k)} \}_{|k|\leq n-1} ,\ldots,
\{\e^{i_u}_{t_u(k)}\}_{|k|\leq n-1}\right)= f\left(\tilde{Z}^{j_1,
(n-1)}_{t_1}, \ldots, \tilde{Z}^{j_u,(n-1)}_{t_u}\right),\vspace{0.2cm}\\
\mathbf{\tilde{G}}^{(n-1)}& := &
\tilde{G}^{(n-1)}\left(\{\e^{j_1}_{t'_1(k)} \}_{|k|\leq n-1},\ldots,
\{\e^{j_v}_{t'_v(k)}\}_{|k|\leq n-1}\right)= g\left(\tilde{Z}^{j_1,
(n-1)}_{t'_1}, \ldots, \tilde{Z}^{j_v,(n-1)}_{t'_v}\right),
\end{eqnarray*}
where $ \tilde{Z}^{i,(s)}_{t}= \Hc\left(\y^i,
\{\ldots,0,e^i_{-s},\e^{i}_{t(-s+1)},\ldots,\e^{i}_{t(s-1)},e^i_{s},
0, \ldots  \}\right)$.

Let us denote
\begin{equation*}
\begin{array}{lllll}
\mathbf{F}_{n} & = & F_{n}\left(
\{\e^{i_1}_{t_1(k)}\}_{|k|=n},\ldots,
\{\e^{i_u}_{t_u(k)}\}_{|k|=n}\right)
&:=& \mathbb{E}\left[\left. \mathbf{F}^{(n)} \right| \mathcal{B}_{n}\right ], \vspace{0.2cm}\\
\mathbf{G}_{n} & = & G_{n}\left(
\{\e^{j_1}_{t'_1(k)}\}_{|k|=n},\ldots,
\{\e^{j_v}_{t'_v(k)}\}_{|k|=n}\right)&:= &\mathbb{E}\left[\left.
\mathbf{G}^{(n)} \right| \mathcal{B}_{n} \right].
\end{array}
\end{equation*}
By using the expression for the conditional covariance given
$\mathcal{B}_{n}$,  for $Y$ fixed,  we have
\begin{equation}\label{ec.6.3.p1n6}
\begin{array}{l}
\left|cov\left(\left.\mathbf{F}^{(n)}, \, \mathbf{G}^{(n)}\right|Y \right)\right| \vspace{0.2cm}\\
\begin{array}{ll}
 \leq & \mathbb{E}^Y\left[ \left| cov \left(\left.\mathbf{F}^{(n)}, \,
\mathbf{G}^{(n)}\right|Y ,\mathcal{B}_{n} \right)\right|\right] +
cov\left( \left. \mathbb{E}^Y\left[\left. \mathbf{F}^{(n)}\right|
\mathcal{B}_{n}\right],
 \, \mathbb{E}^Y\left[\left.\mathbf{G}^{(n)}\right| \mathcal{B}_{n}\right] \right| Y \right) \vspace{0.2cm}\\
 = & \mathbb{E}^Y\left[ \left| cov \left( \left.
\mathbf{\tilde{F}}^{(n-1)}, \, \mathbf{\tilde{G}}^{(n-1)} \right| Y,
\mathcal{B}_{n} \right) \right|\right] + \left|cov\left(\left.
\mathbf{F}_{n} , \, \mathbf{G}_{n}\right|Y \right)\right|.
\end{array}
\end{array}
\end{equation}

It is easy to verify that
\begin{equation*}
\begin{array}{lllll}
Lip( \mathbf{\tilde{F}}^{(n-1)} )& \leq & Lip(f) \sum_{m=1}^u
\sum_{|k|\leq n-1} |a_k(\y^{i_m})| & = & Lip(f)d_{\mathbf{i}_u}^{n-1}(Y) \vspace{0.2cm}\\
Lip( \mathbf{\tilde{G}}^{(n-1)} )& \leq & Lip(g) \sum_{m=1}^u
\sum_{|k|\leq n-1} |a_k(\y^{j_m})| & = &
Lip(g)d_{\mathbf{j}_v}^{n-1}(Y).
\end{array}
\end{equation*}
Thus, applying the inductive hypothesis we obtain
\begin{equation}\label{ec.6.3.p1n7}
\mathbb{E}^Y\left[ \left| cov \left( \left.
\mathbf{\tilde{F}}^{(n-1)}, \, \mathbf{\tilde{G}}^{(n-1)} \right| Y,
\mathcal{B}_{n} \right) \right|\right]  \leq
\psi\left(d^{(n-1)}_{\mathbf{i}_u}(Y),\,
d^{(n-1)}_{\mathbf{j}_v}(Y),\, Lip(f), \,Lip(g)\right) \lambda(r).
\end{equation}

On the other hand, let us $\mathcal{B}_{n}^{+}=
\sigma\left(\left\{\e^{i_m}_{t_m(n)}: \, m=1\ldots u\right\} \cup
\left\{\e^{j_m}_{t'_m(n)}: \, m=1\ldots v\right\}\right)$. Now, if
we fix  $\e^{i_m}_{t_m(n)}= e^{i_m}_n$ for  $ m=1\ldots u$ and
$\e^{j_m}_{t'_m(n)} = e^{j_m}_n$ for $m=1\ldots v$, we can write
$\mathbf{F}_{n}$ and $\mathbf{G}_{n}$ respectively as
\begin{eqnarray*}
 \mathbf{F}_{-n}^{(0)}& = & F_{-n}^{(0)}\left(
\e^{i_1}_{t_1(-n)},\ldots, \e^{i_u}_{t_u(-n)}\right):= F_{n}\left(
\{ \e^{i_1}_{t_1(-n)}, e^{i_1}_n\},\ldots, \{\e^{i_u}_{t_u(-n)}, e^{i_u}_n \}\right) , \vspace{0.2cm}\\
 \mathbf{G}_{-n}^{(0)}& = & G_{-n}^{(0)}\left(
\e^{j_1}_{t'_1(-n)},\ldots, \e^{j_v}_{t'_v(-n)}\right):=
G_{n}\left(\{ \e^{j_1}_{t'_1(-n)}, e^{j_1}_n \},\ldots, \{
\e^{j_v}_{t'_v(-n)}, e^{j_v}_n\} \right).
\end{eqnarray*}
Using the notation
\begin{eqnarray*}
 \mathbf{F}_{n}^{(0)} & = &
F_{n}^{(0)}\left( \e^{i_1}_{t_1(n)},\ldots, \e^{i_u}_{t_u(n)}\right) := \E\left[ \left.\mathbf{F}_n \right| \mathcal{B}_{n}^{+}\right], \vspace{0.2cm}\\
 \mathbf{G}_{n}^{(0)} & = &
G_{n}^{(0)}\left( \e^{j_1}_{t'_1(n)},\ldots,
\e^{j_v}_{t'_v(n)}\right):= \E\left[\left. \mathbf{G}_n \right|
\mathcal{B}_{n}^{+}\right].
\end{eqnarray*}
We have
\begin{equation}\label{ec.6.3.p1n8}
\begin{array}{l}
 \left|cov\left(\left.\mathbf{F}_{n},\, \mathbf{G}_{n})\right|Y\right)\right|\vspace{0.2cm}\\
\begin{array}{ll}
 \leq & \mathbb{E}^Y\left[ \left| cov \left( \left. \mathbf{F}_{n}, \,
\mathbf{G}_{n}\right|Y ,\mathcal{B}_{n}^{+} \right) \right|\right] +
\left| cov\left( \left. \mathbb{E}^Y\left[\left.
\mathbf{F}_{n}\right| \mathcal{B}_{n}^{+}\right],
 \,\mathbb{E}^Y\left[\left. \mathbf{G}_{n} \right| \mathcal{B}_{n}^{+}\right] \right| Y\right) \right|\vspace{0.2cm}\\
 = & \mathbb{E}^Y\left[ \left| cov \left( \left.
\mathbf{F}_{-n}^{(0)}, \, \mathbf{G}_{-n}^{(0)} \right| Y,
\mathcal{B}_{n}^{+} \right) \right|\right] + \left| cov\left(\left.
\mathbf{F}_{n}^{(0)} , \, \mathbf{G}_{n}^{(0)}\right|Y
\right)\right|.
\end{array}
\end{array}
\end{equation}

Let us $\mathcal{B}_{k}^{0}= \sigma\{\emptyset\}$ and
$\mathcal{B}_{k}^{m}= \sigma \left(
\left\{\varepsilon^{i_l}_{t_l(k)}, \varepsilon^{j_l}_{t'_l(k)}: \,
1\leq l \leq m \right\}\right) $, for $1 \leq m \leq u\wedge v$ and
$|k|=n$. If we fix $\varepsilon^{i_l}_{t_l(k)}=e^{i_l}_k$,
$\varepsilon^{j_l}_{t'_l(k)}= e^{j_l}_k$ for $ 1\leq l \leq m$, then
we can write $\mathbf{F}^{(0)}_k$ and $\mathbf{G}^{(0)}_k$
respectively as
\begin{equation*}
\begin{array}{lllll}
\mathbf{\tilde{F}}^{(0,m)}_k& := & \tilde{F}^{(0,m)}_k\left(
\e^{i_{m+1}}_{t_{m+1}(k)} ,\ldots, \e^{i_u}_{t_u(k)} \right)
& = & \mathbf{F}^{(0)}_k \left(e^{i_{1}}_k,\ldots, e^{i_{m}}_k,  \e^{i_{m+1}}_{t_{m+1}(k)} ,\ldots, \e^{i_u}_{t_u(k)} \right),\vspace{0.2cm}\\
\mathbf{\tilde{G}}^{(0,m)}_k& := & \tilde{G}^{(0,m)}_k\left(
\e^{j_{m+1}}_{t'_{m+1}(k)} ,\ldots, \e^{j_v}_{t'_v(k)} \right) & = &
\mathbf{G}^{(0)}_k \left(e^{j_{1}}_k,\ldots, e^{j_{m}}_k,
\e^{j_{m+1}}_{t'_{m+1}(k)} ,\ldots, \e^{j_v}_{t'_v(k)} \right).
\end{array}
\end{equation*}
Let us denote
\begin{equation*}
\begin{array}{lllll}
\mathbf{\bar{F}}^{(0,1)}_k & = & \bar{F}^{(0,1)}_k\left(
\e^{i_1}_{t_1(k)}\right)
& := & \mathbb{E}^Y\left[ \mathbf{F}^{(0)}_k \,\left| \mathcal{B}_{k}^{1}\right. \right],\vspace{0.2cm}\\
\mathbf{\bar{G}}^{(0,1)}_k & = & \bar{G}^{(0,1)}_k\left(
\e^{j_1}_{t'_1(k)}\right) &:= & \mathbb{E}^Y\left[
\mathbf{G}^{(0)}_k \left| \mathcal{B}_{k}^{1}\right.\right],
\end{array}
\end{equation*}
and for $m>1$
\begin{equation*}
\begin{array}{lllll}
\mathbf{\bar{F}}^{(0,m)}_k & = & \bar{F}^{(0,m)}_k\left(
\e^{i_m}_{t_m(k)}\right)& := &
\mathbb{E}^Y\left[\left. \mathbf{\tilde{F}}^{(0,m-1)}_k \,\right| \mathcal{B}_{k}^{m}\right ],\vspace{0.2cm}\\
\mathbf{\bar{G}}^{(0,m)}_k & = & \bar{G}^{(0,m)}_k \left(
\e^{j_m}_{t'_m(k)}\right) &:= & \mathbb{E}^Y\left[\left.
\mathbf{\tilde{G}}^{(0,m-1)}_k \right| \mathcal{B}_{k}^{m}\right].
\end{array}
\end{equation*}
In the same way to Step 1, for $Y$ fixed, is obtain
\begin{equation*}
\begin{array}{lll}
Lip\left(\bar{F}^{(0,m)}_k\right) \leq Lip(f)
\left|a_{k}\left(\y^{i_m}\right) \right|,
& \quad & \| \bar{F}^{(0,m)}_k \|_{\infty} \leq \|f\|_{\infty} \leq 1, \vspace{0.2cm}\\
Lip\left(\bar{G}^{(0,m)}_k\right) \leq Lip(g)
\left|a_{k}\left(\y^{j_m}\right) \right|, & \quad & \|
\bar{G}^{(0,m)}_k \|_{\infty} \leq \|g\|_{\infty}\leq 1,
\end{array}
\end{equation*}
and
\begin{equation}\label{ec.6.3.p1n9}
\begin{array}{rll}
\left|cov\left(\left. \mathbf{F}_{-n}^{(0)} , \,
\mathbf{G}_{-n}^{(0)} \right| Y, \mathcal{B}_{n}^{+} \right)\right|
& \leq& \psi \left(d^{(0,
-n)}_{\mathbf{i}_u}(Y), \, d^{(0, -n)}_{\mathbf{j}_v}(Y),\, Lip(f) ,\, Lip(g) \right) \lambda(r)\vspace{0.2cm}\\
\left|cov\left( \left.\mathbf{F}_{\;n}^{(0)} , \,
\mathbf{G}_{\;n}^{(0)}\, \right| Y \right)\right| & \leq  &\psi
\left(d^{(0, n)}_{\mathbf{i}_u}(Y), \, d^{(0,
n)}_{\mathbf{j}_v}(Y),\, Lip(f) ,\, Lip(g)\right) \lambda(r),
\end{array}
\end{equation}
where $d_{\mathbf{i}_u}^{(0,k)}(Y)= \sum_{m=1}^u
\left|a_k(\y^{i_m})\right|$ and $d_{\mathbf{j}_v}^{(0,k)}(Y)=
\sum_{m=1}^v \left|a_k(\y^{j_m})\right|$.

Getting from equations \eqref{ec.6.3.p1n8} and \eqref{ec.6.3.p1n9},
\begin{equation}\label{ec.6.3.p1n10}
\left|cov\left( \mathbf{F}_{n} , \, \mathbf{G}_{n}| Y \right)\right|
\leq \psi\left(d^{(n)}_{\mathbf{i}_u}-d^{(n-1)}_{\mathbf{i}_u},
\,d^{(n)}_{\mathbf{j}_v}- d^{(n-1)}_{\mathbf{j}_v},\, Lip(f) ,\,
Lip(g)\right) \lambda(r).
\end{equation}
Finally, from \eqref{ec.6.3.p1n6}, \eqref{ec.6.3.p1n7} and
\eqref{ec.6.3.p1n10}, we have that
\begin{equation*}
\left|cov\left(\left.\mathbf{F}^{(n)}, \, \mathbf{G}^{(n)}\right|Y
\right)\right| \leq \psi\left(d^{(n)}_{\mathbf{i}_u},\,
d^{(n)}_{\mathbf{j}_v},\,Lip(f), \,Lip(g)\right) \lambda(r).
\end{equation*}

\quad\\
\textbf{Part b:} Taking $n\rightarrow \infty$ yields
$$|cov ( \mathbf{F}, \, \mathbf{G}|Y )|\leq \psi(d_{\mathbf{i}_u}(Y),\, d_{\mathbf{j}_v}(Y),\,Lip(f), \,Lip(g)) \lambda(r),$$
where
$$ d_{\mathbf{i}_u}(Y)=  \sum_{m=1}^u \left\|a(\y^{i_m})\right\|, \quad
\text{and} \quad d_{\mathbf{j}_v}(Y)=  \sum_{m=1}^v
\left\|a_{k}(\y^{j_m}) \right\|.$$
\quad\\
\textbf{Part c:} For $f,g \in \Delta^{(u)}_1\times\Delta^{(v)}_1$
and  $Z^{i}_t=\mathcal{H}\left(\y^{i}, \{\e^{i}_{t-k} \}_{k\in
\Z}\right)$ we denote
$$\mathbf{f}= f\left(Z^{i_1}_{t_1}, \ldots, Z^{i_u}_{t_u}\right),
\quad \text{and} \quad \mathbf{g}= g\left(Z^{j_1}_{t'_1}, \ldots,
Z^{j_v}_{t'_v}\right).$$
Then, reindexing the sequences $\{\e^i_{t-k}\}_{k \in \Z}$ by
$\{\e^i_{t(k)}\}_{k \in \Z}$ it holds
$$|cov ( \mathbf{f}, \, \mathbf{g}|Y )|\leq \psi(d_{\mathbf{i}_u}(Y),\, d_{\mathbf{j}_v}(Y),\,Lip(f), \,Lip(g)) \lambda(r).$$
Furthermore, we can verify that
$\E^Y\left[|Z^{i}_{t}|^2\right]^{\frac{1}{2}}\leq
\left\|a(\y^{i})\right\|= V(\y^i)$. Therefore, $Z$ is a
$(\lambda,Y)$-weakly dependent doubly stochastic process.
Analogously, the result can be proved for the cases of $\eta$,
$\theta$, $\kappa$ or $\kappa'$-weak dependence.
\end{proof}


\subsection{Proof of Theorem~\ref{theo.6.4.1}}

\hspace{0.4cm} This proof is  an adaptation for the case doubly stochastic of the
proof of Lemma 3.1 given in \cite{WDB-PDoukhan}.

\begin{proof} \quad Let $\xi_{k}^{i-l,(s)}= \xi_{k}^{i-l} \1_{|l|< s}$,
$\varepsilon_k^{i,(s)}= \mathcal{H}_{\varepsilon}\left(
\left\{\xi_{k}^{i-l,(s)}\right\}_{l\in \mathbb{Z}}\right)$ and
$Z^{i, (s)}_t= \mathcal{H}\left(\y^i, \{\varepsilon_{t-k}^{i,(s)}\}_{
k\in \mathbb{Z}} \right)$.\\
Let $f \in \Delta^{(u)}_1$ and $g \in \Delta^{(v)}_1$ with $u, v \in
\N$, and  $i_1\leq, \ldots,\leq i_u \leq i_u + r < j_1 \leq,
\ldots,\leq j_v $ with $r>2s$. We denote
\begin{equation*}
\begin{array}{lllllll}
\mathbf{f}& = & f(Z^{i_1}_{t_1}, \ldots, Z^{i_u}_{t_u}), & \quad&
\mathbf{f}^{(s)}& = & f(Z^{i_1, (s)}_{t_1}, \ldots, Z^{i_u,
(s)}_{t_u} ), \vspace{0.2cm}\\
\mathbf{g}& = & g(Z^{j_1}_{t'_1}, \ldots, Z^{j_v}_{t'_v}), & \quad&
\mathbf{g}^{(s)} & = & g(Z^{j_1, (s)}_{t'_1}, \ldots,Z^{j_v,
(s)}_{t'_v} ).
\end{array}
\end{equation*}

The sequences $\left\{\xi_{k}^{l, (s)} \right\}_{l \leq i}$ and $
\{\xi_{k'}^{l, (s)} \}_{l \geq i+r}$ are independent if $r>2s$. Then,
we have that $\mathbf{f}^{(s)}$ and $\mathbf{g}^{(s)}$ are
independent and thus
\begin{eqnarray*}
\left|cov \left(\mathbf{f}, \, \mathbf{g}|Y \right)\right| & \leq
\quad& \left|cov\left( \mathbf{f} - \mathbf{f}^{(s)} ,\, \mathbf{g}
|Y \right)\right|+
\left|cov\left( \mathbf{f}^{(s)}, \, \mathbf{g}- \mathbf{g}^{(s)}|Y \right)\right| \vspace{0.2cm}\\
&\leq \quad & 2 \|g\|_{\infty} \mathbb{E}^Y\left[ |\mathbf{f} -
\mathbf{f}^{(s)}|\right] + 2 \|f\|_{\infty}\mathbb{E}^Y\left[
| \mathbf{g} - \mathbf{g}^{(s)}|\right]\\
&\leq \quad& 2 Lip(f) \sum_{m=1}^{u}
\mathbb{E}^Y[|Z^{i_m}_{t_m} - Z^{i_m, (s)}_{t_m} |]\\
& \quad +& 2 Lip(g)\sum_{m=1}^{v} \mathbb{E}^Y[|Z^{j_m}_{t'_m} -
Z^{j_m, (s)}_{t'_m} |].
\end{eqnarray*}
We can verify that, for almost all $\y^i$, $\mathbb{E}^{\y^i}\left[|
Z^{i}_t |^2\right]^{\frac{1}{2}} = \left\|c(\y^i)\right\|_2$ and
conditions \eqref{ec.6.3.4}, \eqref{ec.6.4.1} imply
\begin{equation}\label{ec.6.4.2}
\begin{array}{l}
\mathbb{E}^{\y^i}[| Z^{i}_t - Z^{i, (s)}_t |]\\
\begin{array}{ll}
\leq & \mathbb{E}^{\y^i}[| Z^{i}_t - Z^{i, (s)}_t |^2]^{\frac{1}{2}}\\
%
%
\leq &\left( \underset{l \in \Z}\sum
\underset{k=1}{\overset{\infty}\sum} \underset{u=1}{\overset{k}\sum}
\underset{\substack{j_1<\ldots<j_k \\ j_u=l} }\sum  c^2_{k:j_1,
\ldots, j_k}(\y^i)
\right)^{\frac{1}{2}} \E[|\e^{i}_{t}-\e^{i,(s)}_{t}|^2]^{\frac{1}{2}}\\
\leq & \left\|c(\y^i)\right\|_2 \mathbb{E}\left[\left|
H_{\varepsilon}(\{\xi_{t}^{i-l}\}_{l \in \mathbb{Z}}) -
H_{\varepsilon}(\{\xi_{t}^{i-l}\}_{|l|<s }) \right|^2
\right]^{\frac{1}{2}}\\
\leq & \left\|c(\y^i)\right\|_2 \, \delta_s.
\end{array}
\end{array}
\end{equation}

From condition C2 we have that $\|c(\y)\|_2<\infty$ $\mu-a.s.$
Therefore, for $\eta(r) \leq 2 \delta_{\lfloor r/2 \rfloor}$ and
$V(\y)= \|c(\y)\|_2$ the result follows.
\end{proof}


\subsection{Proof of Theorem~\ref{theo.6.4.2}}

\hspace{0.4cm} This proof is  an adaptation for the case doubly stochastic of the
proof of Lemma 3.2 given in \cite{WDB-PDoukhan}.

\begin{proof} \quad Let $\xi_{k}^{j-l, (r)}= \xi_{k}^{j-l} \1_{\{l<r\}}$,
$\varepsilon_k^{j,(r)}= \mathcal{H}_{\varepsilon}\left(
\left\{\xi_{t}^{j-l, (r)}\right\}_{l\in \mathbb{Z}}\right)$ and
$Z^{j, (r)}_t=
\mathcal{H}\left(\y^j, \{\varepsilon_{t-k}^{j,(r)}\}_{ k\in \mathbb{Z}} \right)$.\\
Let $f \in \Delta^{(u)}_1$ and $g \in \Delta^{(v)}_1$ with $u, v \in
\N$, and  $i_1\leq, \ldots,\leq i_u \leq i_u + r < j_1 \leq,
\ldots,\leq j_v $ with $r>0$. 

We denote
\begin{equation*}
\begin{array}{lll}
\mathbf{f}& = & f(Z^{i_1}_{t_1}, \ldots, Z^{i_u}_{t_u}),\\
\mathbf{g}& = & g(Z^{j_1}_{t'_1}, \ldots, Z^{j_v}_{t'_v}),\\
\mathbf{g}^{(r)} & = & g(Z^{j_1, (r)}_{t'_1}, \ldots,Z^{j_v,
(r)}_{t'_v} ).
\end{array}
\end{equation*}

The sequences $ \{\xi_{k}^{l} \}_{l \leq i}$ and $
\left\{\xi_{k}^{l, (r)} \right\}_{l \geq i+r}$ are independent if
$r>0$. Then,  $\mathbf{f}$ and $\mathbf{g}^{(r)}$ are independent
and so
\begin{eqnarray*}
\left|cov\left( \mathbf{f}, \, \mathbf{g}\,| Y\right)\right| & = &
\left|cov\left( \mathbf{f}, \, \mathbf{g} -
\mathbf{g}^{(r)}\,| Y \right)\right|\\
& \leq & 2 \|f\|_{\infty}Lip(g)\sum_{m=1}^{v}
\mathbb{E}^Y[|Z^{j_m}_{t'_m} - Z^{j_m, (r)}_{t'_m}|].
\end{eqnarray*}
On the other hand, conditions \eqref{ec.6.3.4}, \eqref{ec.6.4.1}
imply $\mathbb{E}^{\y^j}[| Z^{j}_t - Z^{j, (s)}_t |]\leq
\|c(\y^{j}\|_2 \delta_r$. Furthermore, from condition~C2, we can
verify that $\mathbb{E}^{Y}\left[| Z^{i}_t |^2\right]^{\frac{1}{2}}
= \left\|c(\y^i)\right\|_2< \infty$ $\mu-a.s.$
Therefore, we obtain the result for $\theta(r) \leq 2 \delta_{r}$
and $V(\y)=\|c(\y)\|_2$.

\end{proof}


\subsection{Proof of Central Limit Theorems \ref{theo.6.5.1} and \ref{theo.6.5.2} }

\hspace{0.4cm} In this section we proof the Central Limit Theorems \ref{theo.6.5.1}
and \ref{theo.6.5.2} for the aggregation of weakly dependent doubly
stochastic processes under condition K$2_{\delta}$, K4 and
K5. The CLT are obtained by using Berstein's blocks arguments. This
proof is similar to proof of CLT given in (Section
7.1, \cite{WDB-PDoukhan}) but it is adapted in the context of
sequence weakly dependent of doubly stochastic processes.

Let $f(z)= \exp^{-i x z}$,  $f \in {\cal C}^3(\mathbb{R})$ with
bounded derivatives up to order 3. In the following, for $x,\, t$
fixed, we prove that
$$|\Delta_N|= |E^Y[f(X^N_t(Y))-f(X_t)]| \limiteN 0,$$
where $X^N(Y)$ is the partial aggregation process and $X$ is a
gaussian process with covariance function $\Gamma(\tau)$.

Let us consider three sequences of positives integers $p=\{p(N)\}_{N
\in \mathbb{N} }$, $q=\{q(N)\}_{N \in \mathbb{N} }$ and
$r=\{r(N)\}_{N \in \mathbb{N} }$ such that:
\begin{itemize}
\item $\lim_{N \rightarrow \infty } \frac{p(N)}{N} = \lim_{N \rightarrow \infty } \frac{q(N)}{p(N)}=0$.

\item $r(N)= \left[ \frac{N}{p(N)+q(N)}\right]$, thus $\lim_{N \rightarrow \infty }r(N)=\infty$.
\end{itemize}
These sequences are chosen to form the Berstein's blocks $I_1,...,
I_r$ and $J_1,..., J_r$ defined by:
\begin{eqnarray*}
I_m & = & \left\{ (m-1)(p(N)+q(N))+1,..., (m-1)(p(N)+q(N))+p(N) \right\}.\\
J_m & = & \left\{ (m-1)(p(N)+q(N))+p(N)+1,..., m(p(N)+q(N)) \right\}.\\
J_r & = & \left\{ r(p(N)+q(N))+1,..., N \right\}.
\end{eqnarray*}
Let $I = \bigcup_{m=1}^r I_m $, $ J = \bigcup_{m=1}^r J_m$ and
$U_m=\sum_{i \in I_m} Z^i_{t}$. Let ${\cal N}, \,{\cal N}_1,...,\,
{\cal N}_r$ be zero-mean Gaussian r.v. independent of the innovations
$\{\varepsilon^i\}$ and such that $\mathbb{E}[|{\cal N}_m|^2  ] =
\mathbb{E}^Y[|U_m|^2]$. We also consider a sequence $U^{*}_1,
\ldots, U^{*}_r$ of mutually independent r.v. such that, given $Y$,
$U^{*}_m$ has the same distribution as $U_m$. We take $B_N=
\sqrt{N}$, then $\Delta_N$ is decomposed as
\begin{equation}\label{ec.6.5.1}
\Delta_N(Y)= \sum_{l=1}^4 \Delta_{l,N}(Y),
\end{equation}
where
\begin{eqnarray*}
\Delta_{1,N}(Y) & = & E^Y\left[f\left( X^N_t(Y) \right)- f\left(\frac{1}{\sqrt{N}} \sum_{j=1}^r U_j\right)\right],\\
\Delta_{2,N}(Y) & = & E^Y\left[f\left(\frac{1}{\sqrt{N}}\sum_{j=1}^r U_j\right)- f\left(\frac{1}{\sqrt{N}}\sum_{j=1}^r U^{*}_j \right)\right],\\
\Delta_{3,N}(Y) & = & E^Y\left[f\left(\frac{1}{\sqrt{N}}\sum_{j=1}^r U^{*}_j\right)- f\left(\frac{1}{\sqrt{N}}\sum_{j=1}^r{\cal N}_j \right)\right],\\
\Delta_{4,N}(Y) & = & E^Y\left[f\left(\frac{1}{\sqrt{N}}\sum_{j=1}^r
{\cal N}_j \right) - f(\Gamma(0) {\cal N})\right].
\end{eqnarray*}

We now define a truncation procedure in order to be able to use
the previous weak dependence condition and applying the Lindenberg method.

For $T>1$, define $f_T(z)=(z\vee -T) \wedge T$, $z\in \mathbb{R}$.
Then $lip(f_T)=1$ and $\|f_T\|_{\infty}=T$.

\begin{lemma}\label{lemma.6.5.1} \quad $\mathbb{E}[|Z^i_t - f_T(Z^i_t)|^k]\leq   2\mathbb{E}[|Z^i_t|^m]T^{-(m-k)}$, for $k \, \in \mathbb{N}$ and $m>0$.
\end{lemma}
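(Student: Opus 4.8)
The plan is to reduce the whole statement to a deterministic, pointwise inequality for the truncation map and then integrate. First I would analyze $z - f_T(z)$ directly from the definition $f_T(z) = (z \vee -T) \wedge T$. Since $f_T(z) = z$ whenever $|z| \leq T$, while $f_T(z)$ equals $T$ for $z > T$ and $-T$ for $z < -T$, a check in each of the three regions gives
\[
|z - f_T(z)| = (|z| - T)^+ \leq |z|\,\1_{\{|z| > T\}}.
\]
This elementary estimate is the crux: the truncation error vanishes off the event $\{|z|>T\}$ and is dominated by $|z|$ on it.

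Raising to the power $k$ yields $|z - f_T(z)|^k \leq |z|^k\,\1_{\{|z|>T\}}$ pointwise. Next I would bring in the truncation level $T$ through the bound $\1_{\{|z|>T\}} \leq (|z|/T)^{m-k}$, which holds for every $z$ as soon as the exponent $m-k$ is nonnegative (on $\{|z|>T\}$ the right-hand side exceeds $1$, and on $\{|z|\leq T\}$ the indicator is $0$). This is exactly the regime in which the lemma is applied in the CLT proof, namely $m = 2+\delta$ against $k\in\{1,2\}$. Combining the two displays gives
\[
|z - f_T(z)|^k \leq |z|^k\,\Big(\tfrac{|z|}{T}\Big)^{m-k} = T^{-(m-k)}\,|z|^m .
\]

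Finally I would apply this pathwise with $z = Z^i_t(\omega)$ and take expectations, obtaining $\mathbb{E}[|Z^i_t - f_T(Z^i_t)|^k] \leq T^{-(m-k)}\,\mathbb{E}[|Z^i_t|^m]$. Since the asserted bound carries an extra factor $2 \geq 1$ on the right-hand side, the claim follows a fortiori; the constant $2$ is harmless slack and is not needed for sharpness.

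There is no genuine obstacle here, as this is essentially the standard truncation estimate; the whole argument is pointwise plus a single integration. The only point demanding care is the bookkeeping of the exponent $m-k$: the step $\1_{\{|z|>T\}} \leq (|z|/T)^{m-k}$ (equivalently, monotonicity of $x\mapsto x^{-(m-k)}$) uses $m \geq k$, and for $m < k$ the stated inequality can actually fail for large values of $|z|$. The lemma should therefore be read with $m \geq k$, which is precisely the situation occurring in the application.
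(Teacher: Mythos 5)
Your proof is correct, and it takes a genuinely different route from the paper's. The paper does not argue pointwise: it first bounds $\mathbb{E}[|Z-f_T(Z)|^k]\leq \mathbb{E}[(|Z|^k+T^k)\1_{\{|Z|\geq T\}}]\leq 2\,\mathbb{E}[|Z|^k\1_{\{|Z|\geq T\}}]$ (using $T\leq |Z|$ on the event), then applies H\"older's inequality with exponents $\frac{m}{k}$ and $\frac{m}{m-k}$ to get $2\,\mathbb{E}[|Z|^m]^{k/m}\,P(|Z|\geq T)^{1-k/m}$, and finally Markov's inequality $P(|Z|\geq T)\leq \mathbb{E}[|Z|^m]\,T^{-m}$, which assembles to the stated bound $2\,\mathbb{E}[|Z|^m]\,T^{-(m-k)}$. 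You replace the H\"older--Markov pair by the single deterministic inequality $|z-f_T(z)|^k\leq |z|^k\1_{\{|z|>T\}}\leq T^{-(m-k)}|z|^m$ followed by one integration. Your approach buys three things: it is more elementary; it yields the sharper constant $1$ in place of $2$ (the paper's factor $2$ stems from its cruder first bound via $|z|^k+T^k$ rather than the exact identity $|z-f_T(z)|=(|z|-T)^{+}$); and it makes completely transparent where the restriction $m\geq k$ enters. On that last point you are right to flag it: the lemma as stated allows any $m>0$, but the paper's own proof also silently requires $m\geq k$ (the H\"older step needs $1-\frac{k}{m}\geq 0$), and indeed for $m<k$ the left-hand side can be infinite while the right-hand side is finite (take $Z$ with $\mathbb{E}[|Z|^m]<\infty$ but $\mathbb{E}[|Z|^k\1_{\{|Z|>T\}}]=\infty$), so the implicit hypothesis is genuinely needed by both arguments; in the applications $m=2+\delta$ against small $k$, so it is harmless.
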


\begin{proof} \quad Applying the Holder's and Markov's inequalities yield

\begin{eqnarray*}
 \mathbb{E}[|Z^i_t -f_T (Z^i_t)|^k]
&\leq & \mathbb{E}[(|Z|^k + T^k)\1_{|Z|\geq T}]\\
&\leq & 2 \mathbb{E}[|Z|^k 1_{|Z|\geq T}]\\
&\leq & 2 \mathbb{E}[|Z|^m]^{\frac{k}{m}} P(|Z|\geq T)^{1-\frac{k}{m}}\\
&\leq & 2 \mathbb{E}[|Z|^m]^{\frac{k}{m}}\left(\frac{\mathbb{E}[|Z|^m]}{T^m}\right)^{1-\frac{k}{m}}\\
&\leq & 2 \mathbb{E}[|Z|^m]T^{-(m-k)}.
\end{eqnarray*}
\end{proof}



\begin{lemma}\label{lemma.6.5.2}\quad\\\vspace{-0.3cm}
If $Z$ is $(\lambda,Y)$-weakly dependent then
$$|cov(Z^i_t, Z^j_t)| \leq  (6 \mathbb{E}[|Z^i_t|^{2+\delta}] + 2(\mathbb{E}[V(\y)] + \mathbb{E}[V(\y)]^{2}))
\lambda(i-j)^{\frac{\delta}{1+ \delta}}.$$
If $Z$ is $(\kappa,Y)$-weakly dependent then
$$|cov(Z^i_t, Z^j_t)| \leq (6 \mathbb{E}[|Z^i_t|^{2+\delta}] + \mathbb{E}[V(\y)]^{2})) \kappa (i-j).$$
\end{lemma}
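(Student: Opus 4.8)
The plan is to reduce the covariance of the unbounded variables $Z^i_t, Z^j_t$ to the covariance of bounded Lipschitz functions, to which the weak dependence hypothesis of Definition~\ref{def.6.2.2} applies, and to control the resulting truncation error through the moment condition K$2_\delta$ via Lemma~\ref{lemma.6.5.1}. Throughout I assume $i<j$, write $r=j-i$, abbreviate $Z=Z^i_t$, $Z'=Z^j_t$, $R=Z-f_T(Z)$, $R'=Z'-f_T(Z')$, and set $A=\E[|Z^i_t|^{2+\delta}]$, which is finite and index-independent by stationarity. Recall that in this section the processes are centered, so $\E[Z]=\E[Z']=0$. I would split
$$cov(Z,Z')=\big(cov(Z,Z')-cov(f_T(Z),f_T(Z'))\big)+cov(f_T(Z),f_T(Z')),$$
bound the two pieces separately, and finally optimize the truncation level $T$.

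For the weak dependence term I would first condition on the environment $Y$. By the law of total covariance,
$$cov(f_T(Z),f_T(Z'))=\E\big[cov(f_T(Z),f_T(Z')\mid Y)\big]+cov\big(\E^Y[f_T(Z)],\E^Y[f_T(Z')]\big).$$
Here $\E^Y[f_T(Z^i_t)]$ depends on $Y$ only through $\y^i$ and $\E^Y[f_T(Z^j_t)]$ only through $\y^j$; since $\y^i$ and $\y^j$ are independent (Assumption~A1), the second term vanishes. To the first term I would apply Definition~\ref{def.6.2.2} to the rescaled functions $f_T/T\in\Delta^{(1)}_1$, which are bounded by $1$ with Lipschitz constant $1/T$. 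In the $\lambda$ case this gives, for $Y$ fixed,
$$\big|cov(f_T(Z),f_T(Z')\mid Y)\big|\leq T^2\,\psi\!\left(V(\y^i),V(\y^j),\tfrac1T,\tfrac1T\right)\lambda(r)=\big(T(V(\y^i)+V(\y^j))+V(\y^i)V(\y^j)\big)\lambda(r),$$
and taking expectations, using $\E[V(\y^i)]=\E[V(\y^j)]=\E[V(\y)]$ together with independence, yields $|cov(f_T(Z),f_T(Z'))|\leq(2T\,\E[V(\y)]+\E[V(\y)]^2)\lambda(r)$. In the $\kappa$ case the same computation with $\psi(u,v,a,b)=abuv$ gives the $T$-free bound $|cov(f_T(Z),f_T(Z'))|\leq\E[V(\y)]^2\kappa(r)$.

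For the truncation error I would write $ZZ'-f_T(Z)f_T(Z')=R\,Z'+f_T(Z)\,R'$ and use $|f_T(Z)|\leq T$ together with $|Z'|\leq T+|R'|$ to get $|\E[ZZ']-\E[f_T(Z)f_T(Z')]|\leq T\E[|R|]+\|R\|_2\|R'\|_2+T\E[|R'|]$. By Lemma~\ref{lemma.6.5.1} with $m=2+\delta$ one has $\E[|R|],\E[|R'|]\leq 2A\,T^{-(1+\delta)}$ and $\E[|R|^2],\E[|R'|^2]\leq 2A\,T^{-\delta}$, so this sum is at most $6A\,T^{-\delta}$. The centering makes the difference of products of means negligible, since $\E[f_T(Z)]=-\E[R]$ forces $\E[f_T(Z)]\E[f_T(Z')]=\E[R]\E[R']=\mathcal{O}(T^{-2(1+\delta)})$; hence $|cov(Z,Z')-cov(f_T(Z),f_T(Z'))|\leq 6A\,T^{-\delta}$.

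Finally I would optimize in $T$. In the $\lambda$ case, balancing $6A\,T^{-\delta}$ against $2T\,\E[V(\y)]\lambda(r)$ by choosing $T=\lambda(r)^{-1/(1+\delta)}$ turns both into multiples of $\lambda(r)^{\delta/(1+\delta)}$, and absorbing the lower-order term $\E[V(\y)]^2\lambda(r)\leq\E[V(\y)]^2\lambda(r)^{\delta/(1+\delta)}$ yields the stated inequality. In the $\kappa$ case the weak dependence term is already $T$-free, so taking $T=\kappa(r)^{-1/\delta}$ (making $6A\,T^{-\delta}=6A\,\kappa(r)$) produces $(6\E[|Z^i_t|^{2+\delta}]+\E[V(\y)]^2)\kappa(r)$. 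I expect the main obstacle to be the bookkeeping in the truncation step: one must keep the bounded factor $f_T$ (of size $T$) paired with a remainder controlled in $L^1$ (of order $T^{-(1+\delta)}$), so that each product is of order $T^{-\delta}$; pairing two $L^2$ remainders instead would only give $T^{-\delta/2}$ and hence the wrong exponent $\delta/(2+\delta)$ rather than $\delta/(1+\delta)$.
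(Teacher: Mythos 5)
Your proposal is correct and follows essentially the same route as the paper: truncate with $f_T$, control the truncation error through Lemma~\ref{lemma.6.5.1} to obtain the $6\,\E[|Z^i_t|^{2+\delta}]\,T^{-\delta}$ contribution, apply the weak dependence hypothesis conditionally on $Y$ to the rescaled functions $f_T/T\in\Delta^{(1)}_1$, and optimize with the same choices $T=\lambda(i-j)^{-1/(1+\delta)}$ and $T=\kappa(i-j)^{-1/\delta}$. The only differences are bookkeeping: the paper telescopes covariances as $cov(Z-f_T(Z),Z')+cov(f_T(Z),Z'-f_T(Z'))+cov(f_T(Z),f_T(Z'))$, which keeps the means inside the covariances and recovers the stated constant exactly (your expectation-difference splitting leaves the harmless higher-order remainder $\E[R]\E[R']=\mathcal{O}(T^{-2(1+\delta)})$, marginally inflating the constant), while your explicit verification that $cov\left(\E^Y[f_T(Z^i_t)],\,\E^Y[f_T(Z^j_t)]\right)=0$ by independence of $\y^i$ and $\y^j$ makes rigorous the law-of-total-covariance step that the paper passes over silently when it bounds term $(iii)$ by $\E\left[|cov(f_T(Z^i_t),f_T(Z^j_t)\,|\,Y)|\right]$.
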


\begin{proof}
\begin{equation*}
\begin{array}{lllll}
|cov(Z^i_t, Z^j_t)| &\leq&   & |cov(Z^i_t - f_T(Z^i_t) ,\, Z^j_t )|    & \qquad \qquad (i)\\
                    &    & + & |cov(f_T(Z^i_t) ,\, Z^j_t - f_T(Z^j_t))|& \qquad \qquad (ii)\\
                    &    & + & |cov(f_T(Z^i_t) ,\, f_T(Z^j_t))|.        & \qquad \qquad (iii)
\end{array}
\end{equation*}

Let $m = 2+ \delta$ and $q=\frac{m}{m-1}$, then
$\frac{1}{m}+\frac{1}{q}=1$. The term $(i)$ is bounded applying
Lemma~\ref{lemma.6.5.1} and Holder's inequality
\begin{eqnarray*}
|cov(Z^i_t - f_T(Z^i_t) ,\, Z^j_t )| &\leq& \mathbb{E}[|(Z^i_t - f_T(Z^i_t)) Z^j_t |]\\
&\leq& \mathbb{E}[|Z^i_t - f_T(Z^i_t)|^{q}]^{\frac{1}{q}} \mathbb{E}[|Z^j_t |^m]^{\frac{1}{m}}\\
&\leq& (2\mathbb{E}[|Z^i_t|^m]T^{m-q} )^{\frac{1}{q}} \mathbb{E}^Y[|Z^j_t |^m]^{\frac{1}{m}}\\
&\leq& 2 \mathbb{E}[|Z^i_t|^m] T^{-(m-2)}.
\end{eqnarray*}
For the term $(ii)$, from Lemma~\ref{lemma.6.5.1} we have that
\begin{eqnarray*}
|cov(f_T(Z^i_t) ,\, Z^j_t - f_T(Z^j_t))| &\leq& 2 T \mathbb{E}[|Z^i_t - f_T(Z^i_t)|]\\
&\leq& 4 \mathbb{E}[|Z^i_t|^m] T^{-(m-2)}.
\end{eqnarray*}

We now consider a trajectory $Y$ fixed, then if $Z$ is
$(\lambda,Y)$-weakly dependent
\begin{eqnarray*}
|cov(f_T(Z^i_t) ,\, f_T(Z^j_t)|Y)| &\leq&
\left(T V(\y^i) + T V(\y^j)  + V(\y^i) V(\y^j) \right) \lambda(i-j).
\end{eqnarray*}
Taking expectation with respect to $Y$ and applying Holder's and
Jensen's inequalities  we can bound the $\mathbf{(iii)}$ term by
\begin{eqnarray*}
|cov(f_T(Z^i_t) ,\, f_T(Z^j_t))| &\leq& \mathbb{E}[|cov(f_T(Z^i_t) ,\, f_T(Z^j_t)|Y)| ]\\
&\leq& (2T\mathbb{E}[V(\y)] + \mathbb{E}[V(\y)]^{2}) \lambda(i-j)\\
&\leq& 2 T(\mathbb{E}[V(\y)] + \mathbb{E}[V(\y)]^{2}) \lambda(i-j).
\end{eqnarray*}
Finally, taking $T= \lambda(i-j)^{-\frac{1}{m-1}}$
\begin{eqnarray*}
|cov(Z^i_t ,\, Z^j_t )| &\leq&   (6\mathbb{E}[|Z^i_t|^m] T^{-(m-2)}
+ 2 T(\mathbb{E}[V(\y)] + \mathbb{E}[V(\y)]^{2}))
\lambda(i-j)\\
 &\leq&  (6 \mathbb{E}[|Z^i_t|^{2+\delta}] + 2(\mathbb{E}[V(\y)] + \mathbb{E}[V(\y)]^{2})) \lambda(i-j)^{\frac{\delta}{1+ \delta}}.
\end{eqnarray*}

In the case of $(\kappa,Y)$-weakly dependent, we have
\begin{eqnarray*}
|cov(f_T(Z^i_t) ,\, f_T(Z^j_t)|Y)| &\leq&
V(\y^i) V(\y^j) \kappa(i-j).
\end{eqnarray*}
Then, if we take $T = \kappa(i-j)^{-\frac{1}{m-2}}$, in a similar
way to the case of $(\lambda,Y)$-weakly dependent it yields
\begin{eqnarray*}
|cov(Z^i_t ,\, Z^j_t )| &\leq&  6\mathbb{E}[|Z^i_t|^m] T^{-(m-2)} + \E[V(\y)]^2\kappa (i-j)\\
 &\leq& \left(6\mathbb{E}[|Z^i_t|^{2+\delta}]+\E[V(\y)]^2 \right)  \kappa (i-j).
\end{eqnarray*}

\end{proof}

\begin{lemma}\label{lemma.6.5.3} \quad
$|\Delta_{1,N}(Y)|= \mathcal{O}\left(\left
(\frac{N-pr}{N}\right)^\frac{1}{2}\right)$ $\nu-a.s.$
\end{lemma}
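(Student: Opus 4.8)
The plan is to read $\Delta_{1,N}(Y)$ as the error incurred by discarding the ``gap'' indices. Since $B_N=\sqrt N$ and $\sum_{j=1}^r U_j=\sum_{i\in I}Z^i_t$ with $I=\bigcup_{m=1}^r I_m$, the two arguments of $f$ differ only by
\[
X^N_t(Y)-\frac{1}{\sqrt N}\sum_{j=1}^r U_j=\frac{1}{\sqrt N}\sum_{i\in J}Z^i_t,
\]
where $J=\{1,\dots,N\}\setminus I$ has cardinality $|J|=N-pr$. As $f(z)=\exp^{-ixz}$ is Lipschitz with $Lip(f)=|x|$, I would first bound, for $Y$ fixed,
\[
|\Delta_{1,N}(Y)|\le Lip(f)\,\mathbb{E}^Y\!\left[\Big|\tfrac{1}{\sqrt N}\textstyle\sum_{i\in J}Z^i_t\Big|\right]\le \frac{Lip(f)}{\sqrt N}\left(\mathbb{E}^Y\Big[\big|\textstyle\sum_{i\in J}Z^i_t\big|^2\Big]\right)^{1/2},
\]
using conditional Jensen (Cauchy--Schwarz). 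This reduces the claim to showing that the conditional second moment is of order $|J|=N-pr$, $\nu$-a.s.

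Next I would expand the conditional second moment into a diagonal and an off-diagonal part. The diagonal is controlled by the defining inequality $\mathbb{E}^{\y^i}[|Z^i_t|^2]\le V^2(\y^i)$ of Definition~\ref{def.6.2.2}, giving $\sum_{i\in J}V^2(\y^i)$. For the off-diagonal part I would use the conditional analogues of Lemma~\ref{lemma.6.5.1} and Lemma~\ref{lemma.6.5.2}: truncating by $f_T$ and applying conditional H\"older and Markov inequalities exactly as in those proofs yields, for a $(\lambda,Y)$-weakly dependent $Z$, a bound of the form $|\mathrm{cov}^Y(Z^i_t,Z^j_t)|\le C(\y^i,\y^j)\,\lambda(|i-j|)^{\delta/(1+\delta)}$. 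Since $\lambda(r)=\mathcal{O}(r^{-\lambda})$ with $\lambda>2+\tfrac{3}{\delta}$ forces $\sum_{r\ge1}\lambda(r)^{\delta/(1+\delta)}<\infty$ (cf.\ Remark~\ref{rm.6.5.1}), summing the tail over $j$ for each fixed $i$ produces a factor proportional to $|J|$. Combining the two parts gives $\mathbb{E}^Y\big[|\sum_{i\in J}Z^i_t|^2\big]\le C(Y)\,(N-pr)$, and substituting back yields $|\Delta_{1,N}(Y)|\le Lip(f)\,C(Y)^{1/2}\big(\tfrac{N-pr}{N}\big)^{1/2}$, the asserted order with a $\nu$-a.s.\ finite constant.

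The hard part will be obtaining the order $N-pr$ rather than the crude $N$ \emph{pathwise}, i.e.\ $\nu$-a.s.\ and uniformly over the $N$-dependent block structure, since the naive inclusion $J\subset\{1,\dots,N\}$ loses exactly the factor one needs (recall $pr/N\to1$). The resolution is to control the environment-dependent prefactors $V^2(\y^i)$ and $C(\y^i,\y^j)$ through a strong law of large numbers for the i.i.d.\ sequence $\{\y^i\}$, using conditions K$2_{\delta}$ and K5 so that $\mathbb{E}[V^2(\y)]<\infty$ and $\mathbb{E}[|Z_t|^{2+\delta}]<\infty$ guarantee the per-index contributions average to finite constants; this makes both the diagonal sum $\sum_{i\in J}V^2(\y^i)$ and the off-diagonal sum genuinely $\mathcal{O}(|J|)=\mathcal{O}(N-pr)$, $\nu$-a.s. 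Verifying this a.s.\ control for the shifting index set $J=J(N)$ is the only genuinely delicate point; the remaining estimates are the routine Lipschitz and truncation bounds already established in Lemmas~\ref{lemma.6.5.1} and~\ref{lemma.6.5.2}, and the case of $(\kappa,Y)$-weak dependence is handled identically using the corresponding bound of Lemma~\ref{lemma.6.5.2} with $\kappa(r)=\mathcal{O}(r^{-\kappa})$, $\kappa>2+\tfrac{2}{\delta}$.
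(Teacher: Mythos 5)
Your opening reduction coincides with the paper's: a first-order Taylor (Lipschitz) bound followed by conditional Cauchy--Schwarz reduces everything to showing that $\frac{1}{N-pr}\,\E^Y\bigl[\bigl|\sum_{i\in J}Z^i_t\bigr|^2\bigr]=\mathcal{O}(1)$ $\nu$-a.s. At that point, however, the paper does something you never do: it writes $\E^Y\bigl[\bigl|\sum_{i\in J}Z^i_t\bigr|^2\bigr]=\sum_{i,j\in J}\Psi_0(\y^i,\y^j)$ and invokes the standing hypothesis K4 (together with the stationarity of $Y$) to conclude that $\frac{1}{N-pr}\sum_{i,j\in J}\Psi_0(\y^i,\y^j)$ converges $\nu$-a.s.\ to $\Gamma(0)$, hence is $\mathcal{O}(1)$ a.s. Condition K4 is assumed in Theorems~\ref{theo.6.5.1} and~\ref{theo.6.5.2} precisely so that such normalized covariance sums are under a.s.\ control (the same device reappears in Lemma~\ref{lemma.6.5.4}); Lemma~\ref{lemma.6.5.3} is not proved from the dependence coefficients at all, and your proposal never uses K4.

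Your substitute --- conditional versions of Lemmas~\ref{lemma.6.5.1} and~\ref{lemma.6.5.2} plus ``the SLLN'' for the environment --- contains a genuine gap, and it is exactly the point you flag as delicate. First, a repairable slip: K5 states only $\E[V(\y)]<\infty$, and neither K5 nor K$2_{\delta}$ gives $\E[V^2(\y)]<\infty$ as you assert ($V^2$ \emph{dominates} the conditional variance; it is not itself controlled by any assumed second moment). One can bypass this on the diagonal via $\E^{\y^i}[|Z^i_t|^2]\leq \E^{\y^i}[|Z^i_t|^{2+\delta}]^{2/(2+\delta)}$ and K$2_{\delta}$. The fatal step is the a.s.\ control over the shifting index set: $J=J(N)$ is a union of gap blocks of total length $N-pr=o(N)$ whose positions sweep through $\{1,\ldots,N\}$ as $N$ grows (with $p=N^{\alpha}$, $q=N^{\beta}$). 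The ordinary SLLN for the i.i.d.\ $\{\y^i\}$ only yields $\sum_{i\leq N}A(\y^i)=\mathcal{O}(N)$ a.s., which, as you note yourself, loses exactly the needed factor. What your argument requires is an a.s.\ law over moving sparse windows, i.e.\ a moving-average SLLN of Erd\H{o}s--R\'enyi type, and such laws demand strictly stronger moment conditions (for windows of length $N^{\gamma}$, roughly $\E[|A|^{1/\gamma}]<\infty$, with $\gamma$ depending on the free parameters $\alpha,\beta$); nothing of the sort follows from K$2_{\delta}$ and K5, and for heavy-tailed per-site contributions the asserted bound $\sum_{i\in J(N)}V^2(\y^i)=\mathcal{O}(N-pr)$ a.s.\ simply fails. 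The same objection applies to your off-diagonal quadratic form $\sum_{i\neq j\in J}V(\y^i)V(\y^j)\lambda(|i-j|)^{\delta/(1+\delta)}$. So the sentence claiming both sums are ``genuinely $\mathcal{O}(|J|)$, $\nu$-a.s.'' is an unproved assertion that cannot be derived from the stated hypotheses; the correct repair is not more weak dependence but the paper's one-line move of feeding the normalized gap sum directly into assumption K4.
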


\begin{proof} \quad Using Taylor's expansion up to the first order, we obtain
\begin{eqnarray*}
|\Delta_{1,N}(Y)| & \leq & \| f^{'} \|_{\infty} \frac{1}{\sqrt{N}} E^Y\left[\left|\sum_{i\in J } Z^i_{t}\right| \right]\\
& \leq & \| f^{'} \|_{\infty} \frac{1}{\sqrt{N}} \mathbb{E}^Y\left[\sum_{i, j \in J} Z^{i}_{t}Z^{j}_{t}\right]^{\frac{1}{2}}\\
& = & \| f' \|_{\infty} \left (\frac{N-pr}{N}\right)^\frac{1}{2}
\left(\frac{1}{N-pr} \sum_{i , j\in J } \Psi_0(\y^{i}, \y^{j})
\right)^{\frac{1}{2}}.
\end{eqnarray*}

On the other hand, since $Y$ is stationary then condition~K4 implies
\begin{eqnarray*}
\frac{1}{N-pr} \sum_{i, j\in J } \Psi_0(\y^{i}, \y^{j})
\xrightarrow{\; \nu-a.s. \; } \Gamma(0).
\end{eqnarray*}
%
%

%
%
%
%
%
Therefore, $\frac{1}{N-pr} \sum_{i , j\in J } \Psi_0(\y^{i}, \y^{j})=
\mathcal{O}(1)$ $\nu-a.s.$, and the result holds.

\end{proof}


\begin{lemma} \label{lemma.6.5.4}\quad
$|\Delta_{4,N}|=o(1)$ $\nu-a.s.$
\end{lemma}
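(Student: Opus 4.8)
The plan is to use that, conditionally on $Y$, both arguments of $f$ inside $\Delta_{4,N}$ are centred Gaussian, so the two conditional expectations are explicit. First I would note that, given $Y$, the variables $\mathcal{N}_1,\dots,\mathcal{N}_r$ are (mutually) independent centred Gaussians with $\mathbb{E}[|\mathcal{N}_m|^2]=\mathbb{E}^Y[|U_m|^2]$, so that $\frac1{\sqrt N}\sum_{m=1}^r\mathcal{N}_m$ is centred Gaussian with conditional variance $\sigma_N^2(Y)=\frac1N\sum_{m=1}^r\mathbb{E}^Y[|U_m|^2]$. Since $f(z)=e^{-ixz}$ and $\mathbb{E}[e^{-ix\xi}]=e^{-x^2 v/2}$ for any centred Gaussian $\xi$ of variance $v$, the first term equals $e^{-x^2\sigma_N^2(Y)/2}$, while the deterministic second term equals the value $e^{-x^2\Gamma(0)/2}$ of the characteristic function of the target Gaussian $X_t$ of variance $\Gamma(0)$. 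Hence $\Delta_{4,N}(Y)=e^{-x^2\sigma_N^2(Y)/2}-e^{-x^2\Gamma(0)/2}$, and from $|e^{-a}-e^{-b}|\le|a-b|$ on $[0,\infty)$ I obtain $|\Delta_{4,N}(Y)|\le\frac{x^2}{2}|\sigma_N^2(Y)-\Gamma(0)|$. The lemma is thus reduced to proving $\sigma_N^2(Y)\to\Gamma(0)$ $\nu$-a.s.

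Next I would expose the block structure of $\sigma_N^2(Y)$. Expanding the square, $\mathbb{E}^Y[|U_m|^2]=\sum_{i,i'\in I_m}\Psi_0(\y^i,\y^{i'})$, so setting $W_m:=\frac1p\mathbb{E}^Y[|U_m|^2]$ (with $p=p(N)$ the common length of the blocks $I_m$) I can write $\sigma_N^2(Y)=\frac{pr}{N}\cdot\frac1r\sum_{m=1}^r W_m$, where $\frac{pr}{N}\to1$ since $\frac{q}{p}\to0$. The key point is that $I_1,\dots,I_r$ are pairwise disjoint translates of one another; because $Y$ is i.i.d. and $\Psi_0(\y^i,\y^{i'})=\mathbb{E}^Y[Z^i_tZ^{i'}_t]$ depends on the indices only through $i-i'$ (the innovations being stationary in $i$), the variables $W_1,\dots,W_r$ are, for each fixed $N$, i.i.d., each with the law of $\Gamma^p(0,Y)$.

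The third step is to combine K4 with a law of large numbers over the blocks. By K4, $\Gamma^p(0,Y)\to\Gamma(0)$ $\nu$-a.s. as $p\to\infty$; using the $2+\delta$ moments of K$2_{\delta}$ to secure uniform integrability of $\{\Gamma^p(0,Y)\}_p$ (equivalently the $L^1(\nu)$ convergence already obtained in Theorem~\ref{theo.2.3.3}), I get $\mathbb{E}[W_m]=\mathbb{E}[\Gamma^p(0,Y)]\to\Gamma(0)$. It then remains to show $\frac1r\sum_{m=1}^r(W_m-\mathbb{E}[W_m])\to0$ $\nu$-a.s.; combined with $\frac{pr}{N}\to1$ this yields $\sigma_N^2(Y)\to\Gamma(0)$ and closes the argument.

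The hard part will be precisely this last almost-sure law of large numbers, because $\{W_m\}$ is a triangular array: both the number of terms $r(N)$ and the distribution of each $W_m$ (through $p(N)$) change with $N$, so the ordinary SLLN does not apply directly. I would handle it by first establishing a uniform $L^{1+\delta/2}$ bound on $W_m=\Gamma^p(0,Y)$ --- available from K$2_{\delta}$ together with the weak-dependence decay of the covariances, which keeps $\mathbb{E}[|\Gamma^p(0,Y)|^{1+\delta/2}]$ bounded in $p$ --- and then applying a von Bahr--Esseen (Marcinkiewicz--Zygmund) inequality to get $\mathbb{E}[|\frac1r\sum_{m=1}^r(W_m-\mathbb{E}W_m)|^{1+\delta/2}]=O(r^{-\delta/2})$, after which Borel--Cantelli along $N$ (using the polynomial growth of $r(N)$, if necessary through a subsequence and monotonicity) gives the a.s. convergence. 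This averaging over blocks is the only genuinely new ingredient compared with the stationary Bernstein-block CLT, where the conditional block variances are deterministic and identical, so that no law of large numbers over $m$ is needed.
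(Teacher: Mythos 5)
Your reduction is correct and, in substance, coincides with the paper's: the paper bounds $\Delta_{4,N}$ by a second-order Taylor expansion, obtaining $|\Delta_{4,N}|\le \frac12\|f''\|_{\infty}\frac{rp}{N}\bigl|\frac{1}{rp}\sum_{j=1}^r\mathbb{E}^Y[|U_j|^2]-\Gamma(0)\bigr|+\frac12\|f''\|_{\infty}\frac{N-rp}{N}\Gamma(0)$, and then uses $\frac{N-rp}{N}\preceq q/p\to0$; your exact computation with Gaussian characteristic functions is an equivalent (arguably cleaner) route to the same reduction, and your identification $\sigma_N^2(Y)=\frac{pr}{N}\cdot\frac1r\sum_m W_m$ with $W_m$ i.i.d.\ distributed as $\Gamma^p(0,Y)$ is correct (the $\Psi_0$-kernel depends on indices only through the gap, the blocks $I_m$ are disjoint translates, and $Y$ is i.i.d.). The divergence is in the final step: the paper disposes of it in one line, reading the $\nu$-a.s.\ convergence of $\frac{1}{rp}\sum_{j}\sum_{i_1,i_2\in I_j}\Psi_0(\y^{i_1},\y^{i_2})$ to $\Gamma(0)$ directly out of condition K4 and stationarity --- that is, it treats the within-block average as one more Cesàro average covered by the assumed SLLN --- whereas you correctly observe that K4, as stated, concerns the full quadratic form $\Gamma^N(0,Y)$ and does not formally cover a triangular array of block averages whose block length $p(N)$ and block count $r(N)$ both change with $N$. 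You are attempting to prove what the paper asserts.

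However, your resolution of this ``hard part'' has two genuine gaps. First, the uniform moment bound $\sup_p\mathbb{E}\bigl[|\Gamma^p(0,Y)|^{1+\delta/2}\bigr]<\infty$ (and the uniform integrability needed for $\mathbb{E}[W_m]=\mathbb{E}[\Gamma^p(0,Y)]\to\Gamma(0)$) is not available from K$2_{\delta}$, K4 and K5 as stated: K5 only gives $\mathbb{E}[V(\y)]<\infty$, while the natural pointwise bound $|\Psi_0(\y^i,\y^j)|\le V(\y^i)V(\y^j)$ would already require $\mathbb{E}[V(\y)^{1+\delta/2}]<\infty$, and the truncation scheme of Lemma~\ref{lemma.6.5.2} controls the \emph{unconditional} covariance, not $\nu$-moments of the conditional kernel $\mathbb{E}^Y[Z^i_tZ^j_t]$; moreover the $L^1(\nu)$ convergence you invoke from Theorem~\ref{theo.2.3.3} is proved only for $DSV^*$ processes with $\chi\in\ell_1$, not in the general setting of the CLT. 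Second, even granting that bound, the von Bahr--Esseen estimate gives $\mathbb{E}\bigl[\bigl|\frac1r\sum_m(W_m-\mathbb{E}W_m)\bigr|^{1+\delta/2}\bigr]=\mathcal{O}(r^{-\delta/2})$ with $r(N)\sim N^{1-\alpha}$, so the Borel--Cantelli series $\sum_N N^{-(1-\alpha)\delta/2}$ diverges unless $(1-\alpha)\delta>2$, i.e.\ essentially $\delta>2$; and the parenthetical fallback ``through a subsequence and monotonicity'' has no traction here, because the entire array $\{W_m\}_{m\le r(N)}$ is redrawn at every $N$ (both $p(N)$ and $r(N)$ change), so there is no monotone or sandwichable quantity with which to interpolate between subsequence points. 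To make your route airtight one must either strengthen K4 so that it explicitly covers Bernstein-block averages (which is, implicitly, the paper's reading), or add the higher moment hypotheses on $V(\y)$ and on the conditional kernel that your LLN argument requires.
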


\begin{proof}\quad Taylor's expansion up to the second order entails
\begin{eqnarray*}
|\Delta_{4,N}|
& \leq & \; \frac{1}{2} \| f'' \|_{\infty}\frac{rp}{N} \left|
\frac{1}{rp} \sum_{j=1}^r \mathbb{E}^Y[|\mathcal{U}_j|^2] -
\Gamma(0) \right| + \frac{1}{2} \| f'' \|_{\infty}
\frac{N-rp}{N}\Gamma(0).
\end{eqnarray*}
From condition K4 we have
$$\frac{1}{rp} \sum_{j=1}^r \mathbb{E}^Y[|U_j|^2]= \frac{1}{rp} \sum_{j=1}^r \sum_{i_1, i_2 \in I_j } \Psi_0(\y^{i_1},\y^{i_2})
\limiteas \Gamma(0).$$
Furthermore, $\frac{N-rp}{N} \preceq \frac{q}{p} \rightarrow 0$ as
$N \rightarrow \infty$, so that $|\Delta_{4,N}|= o(1)$.

\end{proof}


\begin{lemma} \label{lemma.6.5.5} \quad\\
In the $(\lambda,Y)$-weak dependence setting,
$$|\Delta_{2,N}|\preceq \left( \frac{r^2p}{\sqrt{N}}+ \frac{rp}{\sqrt{N}} + \frac{r^2p^2}{N}\right) \lambda(r) \quad \nu-a.s.$$
Moreover in the case of $(\kappa,Y)$-weak dependence
$$|\Delta_{2,N}|\preceq \left( \frac{r^2p^2}{N}\right)\kappa(r) \quad \nu-a.s.$$
\end{lemma}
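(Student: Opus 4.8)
The plan is to exploit the special form of the test function. Since $f(z)=e^{-ixz}$, one has $f\!\left(\frac{1}{\sqrt N}\sum_{j=1}^r U_j\right)=\prod_{j=1}^r f(W_j)$ with $W_j=U_j/\sqrt N$, and likewise for the independent copies $W_j^{*}=U_j^{*}/\sqrt N$. Hence the joint comparison in $\Delta_{2,N}$ factorizes, and a telescoping (Lindeberg) swap gives
$$\Delta_{2,N}(Y)=\sum_{k=1}^r \E^Y\!\left[\Big(\prod_{j<k}f(W_j^{*})\Big)\big(f(W_k)-f(W_k^{*})\big)\Big(\prod_{j>k}f(W_j)\Big)\right].$$
First I would use that, given $Y$, the variables $W_1^{*},\dots,W_r^{*}$ are mutually independent and independent of the original blocks, with $W_k^{*}$ equal in law to $W_k$. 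The starred factor $\prod_{j<k}f(W_j^{*})$ is then independent of the remaining factors and has modulus at most $1$, while the contribution of $f(W_k^{*})$ collapses to $\E^Y[f(W_k^{*})]=\E^Y[f(W_k)]$. Consequently each summand is bounded in modulus by a genuine conditional covariance,
$$\big|\E^Y[\cdots]\big|\le \Big|\mathrm{cov}^Y\Big(f(W_k),\ \prod_{j>k}f(W_j)\Big)\Big|.$$

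The key point is that $f(W_k)$ is a function of the innovations sitting in the big block $I_k$, whereas $\prod_{j>k}f(W_j)$ is a function of those in $\bigcup_{j>k}I_j$, and these two index sets are separated by the interposed small block $J_k$, which provides the separation required by Definition~\ref{def.6.2.2}. Both functions have sup-norm $1$ and, because $|e^{-ixa}-e^{-ixb}|\le |x|\,|a-b|$ and each block sum carries the normalization $1/\sqrt N$, they are Lipschitz with constant $|x|/\sqrt N$ in the required sense (treating real and imaginary parts separately; no truncation is needed here, precisely because $e^{-ixz}$ is already bounded and globally Lipschitz). I would therefore apply the $(\lambda,Y)$-weak dependence inequality with the $\lambda$-function $\psi(d_1,d_2,a,b)=a\,d_1+b\,d_2+ab\,d_1d_2$, the weights being $d_{I_k}(Y)=\sum_{i\in I_k}V(\y^i)$ and $d_{>k}(Y)=\sum_{i\in\bigcup_{j>k}I_j}V(\y^i)$, to obtain
$$\Big|\mathrm{cov}^Y\Big(f(W_k),\prod_{j>k}f(W_j)\Big)\Big|\le\left(\frac{|x|}{\sqrt N}d_{I_k}+\frac{|x|}{\sqrt N}d_{>k}+\frac{x^2}{N}d_{I_k}d_{>k}\right)\lambda(r).$$

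Finally I would sum over $k$ and invoke condition~K5 together with the strong law for the i.i.d.\ weights $\{V(\y^i)\}$. Since $|I_k|=p$ one has $\sum_{k=1}^r d_{I_k}=\sum_{i\in I}V(\y^i)=\mathcal{O}(rp)$ and $\sum_{k=1}^r d_{>k}=\sum_{j=1}^r (j-1)\,d_{I_j}=\mathcal{O}(r^2p)$, while $\sum_{k=1}^r d_{I_k}d_{>k}\le\frac12\big(\sum_k d_{I_k}\big)^2=\mathcal{O}(r^2p^2)$, all $\nu$-a.s. Pairing these with the factors $|x|/\sqrt N$, $|x|/\sqrt N$ and $x^2/N$ reproduces exactly the three announced contributions $\frac{r^2p}{\sqrt N}+\frac{rp}{\sqrt N}+\frac{r^2p^2}{N}$, each multiplied by $\lambda(r)$. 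For $(\kappa,Y)$-weak dependence the function $\psi$ reduces to $ab\,d_1d_2$, so only the product piece survives and the same computation yields the bound $\frac{r^2p^2}{N}\kappa(r)$. The main obstacle I anticipate is organizational rather than conceptual: keeping the Lipschitz constants $|x|/\sqrt N$ and the random weights $\sum V(\y^i)$ (which scale like $p$ for a single block and like $rp$ for all later blocks) correctly paired across the three pieces of $\psi$, and invoking the SLLN for $\{V(\y^i)\}$ so that these partial sums may be replaced by their $\mathcal{O}$-orders $\nu$-almost surely.
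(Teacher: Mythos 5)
Your proof is correct and takes essentially the same route as the paper's: the paper performs the identical Lindeberg telescoping over the Bernstein blocks (in the mirror order, writing each term as $\Lambda_j=\mathrm{cov}\left(f(\omega_j),\,f(u_j)\,|\,Y\right)\E^Y\!\left[f\left(\sum_{i>j}u^{*}_i\right)\right]$ with the starred factor of modulus at most $1$ pulled out by independence), then applies Definition~\ref{def.6.2.2} with Lipschitz constants $|x|/\sqrt{N}$ and weights $\sum_{m<j}\sum_{i\in I_m}V(\y^{i})=\mathcal{O}(rp)$ and $\sum_{i\in I_j}V(\y^{i})=\mathcal{O}(p)$ via condition K5 and the SLLN, exactly as you do (your bookkeeping of the three sums is in fact slightly sharper, but yields the same orders $rp$, $r^2p$, $r^2p^2$). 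One shared slip worth noting: since consecutive big blocks are separated only by the small blocks of length $q$, the weak-dependence inequality actually produces $\lambda(q)$ (resp.\ $\kappa(q)$) rather than $\lambda(r)$ --- this is what the paper's own proof derives and what the subsequent rate computation $\Delta_{2,N}=\mathcal{O}\left(N^{-\alpha-\lambda\beta+\frac{3}{2}}+N^{-\lambda\beta+\frac{1}{2}}+N^{-\lambda\beta+1}\right)$ uses --- so the $\lambda(r)$ in the lemma statement (and in your covariance display) should be read with $r$ as the generic gap of Definition~\ref{def.6.2.2}, not as the number of blocks $r(N)$.
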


\begin{proof} \quad Let define, for $j=1 \ldots r$,
\begin{eqnarray*}
\Lambda_j & = & \mathbb{E}^Y[f(\mathcal{W}_j + u_j) -
f(\mathcal{W}_j+ u^{*}_j)],
\end{eqnarray*}
where $ u_j= \frac{1}{\sqrt{N}}U_j$, $ u^{*}_j=
\frac{1}{\sqrt{N}}U^{*}_j$, $\mathcal{W}_j= \omega_j + \sum_{i>j}
u^{*}_i$ and $\omega_j = \sum_{i<j} u_i$.

Using the properties of the exponential function $f(z)= \exp^{-i x
z}$ and the independence properties of the variables $\{U^{*}_j\}$
we have
\begin{equation}\label{ec.6.5.4}
\begin{array}{lll}
 \Lambda_j   & = & \left(\mathbb{E}^Y[f(\omega_j)f(u_j)] - \mathbb{E}^Y[f(\omega_j)] \mathbb{E}^Y[f(u^{*}_j)] \right)
 \mathbb{E}^Y\left[f\left( \sum{i>j} u^{*}_i\right)\right],\vspace{0.2cm}\\
 & = & cov\left( f(\omega_j)\, , f(u_j)| Y\right) \mathbb{E}^Y\left[f\left( \sum_{i>j} u^{*}_i\right)\right].
\end{array}
\end{equation}
Then, we have
\begin{equation}\label{ec.6.5.5}
|\Delta_{2,N}|\leq \sum_{j=1}^r |\Lambda_j|.
\end{equation}
From equation~\eqref{ec.6.5.4} and applying the definition of
$(\lambda,Y)$-weak dependence, for $Y$ fixed, it follows that
\begin{equation*}
\begin{array}{lll}
|\Lambda_j| & \leq & \|f\|_{\infty} \left| cov\left( f\left(
\frac{1}{\sqrt{N}}\sum_{m<j} \sum_{i \in I_m} Z^i_t\right) ,\,
f\left(\frac{1}{\sqrt{N}} \sum_{i \in I_j} Z^i_t\right) \right)\right|\vspace{0.2cm}\\
& \leq & \|f\|_{\infty} \left[ lip(f) \|f\|_{\infty} \frac{1}{\sqrt{N}}\sum_{m<j}
\sum_{i_1 \in I_m} V(\y^{i_1}) \right.  +   lip(f) \|f\|_{\infty} \frac{1}{\sqrt{N}}\sum_{i_2 \in I_j} V(\y^{i_2})\vspace{0.2cm}\\
&      & +   \left. lip(f) lip(f) \frac{1}{\sqrt{N}}\sum_{m<j}
\sum_{i_1 \in I_m} V(\y^{i_1}) \frac{1}{\sqrt{N}}\sum_{i_2 \in I_j}
               V(\y^{i_2}) \right]\lambda(q).
\end{array}
\end{equation*}
Furthermore, by the  SLLN we have that condition K5 implies
$$\sum_{m<j} \sum_{i_1 \in I_m} V(\y^{i_1})= \mathcal{O}(rp), \quad \text{and} \quad  \sum_{i_2 \in I_j} V(\y^{i_2})=\mathcal{O}(p).$$
Then, from equation \eqref{ec.6.5.5}, the results follows for
$(\lambda,Y)$-weak dependence. In the case of $(\kappa,Y)$-weak
dependence the proof is similar.

\end{proof}

\begin{lemma} \label{lemma.6.5.6}\quad
$|\Delta_{3,N}|\preceq \frac{rp^{2+\delta}}{N^{1+
\frac{\delta}{2}}}$, $\nu-a.s.$
\end{lemma}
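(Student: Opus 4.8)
The plan is to run the Lindeberg replacement (telescoping) argument on the two sums of \emph{mutually independent} blocks that define $\Delta_{3,N}$. Recall that, conditionally on $Y$, the variables $U^{*}_1,\ldots,U^{*}_r$ are mutually independent with $U^{*}_j$ distributed as $U_j=\sum_{i\in I_j}Z^i_t$, while $\mathcal{N}_1,\ldots,\mathcal{N}_r$ are independent centered Gaussians with $\mathbb{E}[|\mathcal{N}_j|^2]=\mathbb{E}^Y[|U_j|^2]$, all independent of the innovations. First I would set, for $j=1,\ldots,r$,
$$\mathcal{W}_j=\frac{1}{\sqrt{N}}\left(\sum_{i<j}\mathcal{N}_i+\sum_{i>j}U^{*}_i\right),$$
which given $Y$ is independent of both $U^{*}_j$ and $\mathcal{N}_j$, and write the telescoping identity
$$f\left(\frac{1}{\sqrt{N}}\sum_{j=1}^r U^{*}_j\right)-f\left(\frac{1}{\sqrt{N}}\sum_{j=1}^r \mathcal{N}_j\right)=\sum_{j=1}^r\left[f\left(\mathcal{W}_j+\frac{U^{*}_j}{\sqrt{N}}\right)-f\left(\mathcal{W}_j+\frac{\mathcal{N}_j}{\sqrt{N}}\right)\right],$$
so that $|\Delta_{3,N}(Y)|\le\sum_{j=1}^r|\Theta_j|$ with $\Theta_j=\mathbb{E}^Y[f(\mathcal{W}_j+U^{*}_j/\sqrt{N})-f(\mathcal{W}_j+\mathcal{N}_j/\sqrt{N})]$.

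Next I would Taylor expand $f$ about $\mathcal{W}_j$ to second order, $f(\mathcal{W}_j+x)=f(\mathcal{W}_j)+f'(\mathcal{W}_j)x+\tfrac12 f''(\mathcal{W}_j)x^2+R_j(x)$, and exploit the independence of $\mathcal{W}_j$ from $U^{*}_j$ and $\mathcal{N}_j$ together with the moment matching $\mathbb{E}^Y[U^{*}_j]=\mathbb{E}^Y[\mathcal{N}_j]=0$ and $\mathbb{E}^Y[|U^{*}_j|^2]=\mathbb{E}^Y[|\mathcal{N}_j|^2]$ to cancel the zeroth-, first- and second-order contributions. This leaves $\Theta_j=\mathbb{E}^Y[R_j(U^{*}_j/\sqrt{N})-R_j(\mathcal{N}_j/\sqrt{N})]$. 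Since $f\in\mathcal{C}^3$ with bounded derivatives, the remainder obeys simultaneously $|R_j(x)|\le\|f''\|_{\infty}|x|^2$ and $|R_j(x)|\le\tfrac16\|f'''\|_{\infty}|x|^3$; interpolating $\min(|x|^2,|x|^3)\le|x|^{2+\delta}$ (assuming without loss of generality $0<\delta\le1$, which is legitimate because $K2_\delta$ for a larger exponent implies it for any smaller one) yields $|R_j(x)|\le C|x|^{2+\delta}$, and therefore
$$|\Theta_j|\le\frac{C}{N^{1+\delta/2}}\left(\mathbb{E}^Y[|U_j|^{2+\delta}]+\mathbb{E}^Y[|\mathcal{N}_j|^{2+\delta}]\right).$$

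To bound the block moments I would use the crude convexity estimate $|U_j|^{2+\delta}=\left|\sum_{i\in I_j}Z^i_t\right|^{2+\delta}\le p^{1+\delta}\sum_{i\in I_j}|Z^i_t|^{2+\delta}$, valid since $|I_j|=p$ and $2+\delta\ge1$, giving $\mathbb{E}^Y[|U_j|^{2+\delta}]\le p^{1+\delta}\sum_{i\in I_j}\mathbb{E}^{\y^i}[|Z^i_t|^{2+\delta}]$; the Gaussian term is dominated by the same quantity because $\mathbb{E}^Y[|\mathcal{N}_j|^{2+\delta}]=c_\delta(\mathbb{E}^Y[|U_j|^2])^{1+\delta/2}\le c_\delta\,\mathbb{E}^Y[|U_j|^{2+\delta}]$ by Lyapunov's inequality. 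Summing over $j$ and using $\bigcup_{j=1}^r I_j=I$ with $|I|=rp$ gives
$$|\Delta_{3,N}(Y)|\preceq\frac{p^{1+\delta}}{N^{1+\delta/2}}\sum_{i\in I}\mathbb{E}^{\y^i}[|Z^i_t|^{2+\delta}].$$
Finally, since $Y=\{\y^i\}$ is i.i.d. with law $\nu$ and $K2_\delta$ ensures $\mathbb{E}[|Z_t|^{2+\delta}]=\mathbb{E}\big[\mathbb{E}^{\y}[|Z_t|^{2+\delta}]\big]<\infty$, the strong law of large numbers gives $\frac{1}{rp}\sum_{i\in I}\mathbb{E}^{\y^i}[|Z^i_t|^{2+\delta}]=\mathcal{O}(1)$ $\nu$-a.s., whence $\sum_{i\in I}\mathbb{E}^{\y^i}[|Z^i_t|^{2+\delta}]=\mathcal{O}(rp)$ and $|\Delta_{3,N}(Y)|\preceq rp^{2+\delta}/N^{1+\delta/2}$ $\nu$-a.s.

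The one genuinely delicate point is the remainder estimate: because $K2_\delta$ may supply strictly fewer than three moments, the third-order Taylor bound cannot be used on its own, and it is precisely the interpolation $|R_j(x)|\le C\min(|x|^2,|x|^3)\le C|x|^{2+\delta}$---together with the harmless reduction to $\delta\le1$---that keeps the Lindeberg scheme alive. Everything else is the standard moment bookkeeping of the block method, and the $\nu$-a.s. statement is essentially free from the i.i.d. environment via the SLLN.
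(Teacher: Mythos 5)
Your proposal is correct and follows essentially the same route as the paper's proof: the Bernstein-block Lindeberg replacement of $U^{*}_j$ by $\mathcal{N}_j$, cancellation of the terms up to order two by conditional centering and variance matching, an order-$(2+\delta)$ remainder bound, the Gaussian moment comparison $\mathbb{E}[|\mathcal{N}_j|^{2+\delta}]\preceq(\mathbb{E}^Y[|U_j|^2])^{1+\delta/2}\le\mathbb{E}^Y[|U_j|^{2+\delta}]$, and the SLLN over the i.i.d.\ environment to get $\mathcal{O}(rp^{2+\delta})$ overall. Your only deviations are implementation details that do not change the method: you cancel the low-order Taylor terms via independence of $\mathcal{W}_j$ from the $j$-th block instead of exploiting the multiplicativity of $f(z)=e^{-ixz}$ as the paper does; your pointwise interpolation $\min(|x|^2,|x|^3)\le|x|^{2+\delta}$ is a slightly cleaner rendering of the paper's geometric-mean step $\|f^{(2)}\|_{\infty}^{1-\delta}\|f^{(3)}\|_{\infty}^{\delta}$, which carries the same implicit restriction $\delta\le 1$ that you make explicit; and your convexity bound $|U_j|^{2+\delta}\le p^{1+\delta}\sum_{i\in I_j}|Z^i_t|^{2+\delta}$ followed by one global SLLN over $I$ replaces the paper's Minkowski-plus-per-block-SLLN bookkeeping (and sidesteps the uniformity-in-$j$ point).
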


\begin{proof} \quad Let define, for $j=1 \ldots r$,
\begin{eqnarray*}
\Lambda^{'}_j & = &\mathbb{E}^Y[f(\mathcal{W}^{'}_j + u^{*}_j) -
f(\mathcal{W}^{'}_j+ u^{'}_j)],
\end{eqnarray*}
where  $ u^{*}_j= \frac{1}{\sqrt{N}}U^{*}_j$,
$u^{'}_j=\frac{1}{\sqrt{N}}\mathcal{N}_j$, $\mathcal{W}^{'}_j=
\sum{i<j} u^{*}_i + \sum_{i>j} u^{'}_i$.

Using the properties of exponential function $f(z)= \exp^{-i x z}$
and the independence properties of the variables
$\{\mathcal{N}_j\}$, we have
\begin{equation}\label{ec.6.5.6}
\Lambda^{'}_j  =   \mathbb{E}^Y[ f(u^{*}_j) - f( u^{'}_j)]
\mathbb{E}^Y[ f( \mathcal{W}^{'}_j)].
\end{equation}
Thus,
\begin{equation}\label{ec.6.5.7}
|\Delta_{3,N}|\leq \sum_{j=1}^r |\Lambda^{'}_j|.
\end{equation}

As $\mathbb{E}^Y[u^{*}_j]= \mathbb{E}^Y[u^{'}_j]= 0$ and
$\mathbb{E}^Y[|u^{*}_j|^2]=\mathbb{E}^Y[|u^{*}_j|^2]$, then taking
Taylor's expansion up to order 2 or 3 respectively  yield:
\begin{eqnarray*}
\mathbb{E}^Y[f(u^{*}_j)- f(u^{'}_j)]& = & \frac{1}{2}\mathbb{E}^Y\left[f^{(2)}(\theta^{*})(u^{*}_j)^2 - f^{(2)}(\theta^{'})(u^{'}_j)^2 \right],\\
\mathbb{E}^Y[f(u^{*}_j)- f(u^{'}_j)]& = &
\frac{1}{6}\mathbb{E}^Y\left[f^{(3)}(\theta^{*}_1)(u^{*}_j)^3 -
f^{(3)}(\theta^{'}_1)(u^{'}_j)^3\right].
\end{eqnarray*}
This implies that
\begin{equation}\label{ec.2.5.7}
\begin{array}{lll}
|\Lambda^{'}_j| & \leq &  \left(\frac{1}{2} \|f^{(2)}\|_{\infty}
\mathbb{E}[(u^{*}_j)^2 + (u^{'}_j)^2] \right)\wedge
                          \left(\frac{1}{6} \|f^{(3)}\|_{\infty} \mathbb{E}[(u^{*}_j)^3 + (u^{'}_j)^3] \right)\\
& \leq & \frac{1}{3^{\delta}}\|f\|_{\infty} \;
\|f^{(2)}\|^{1-\delta}_{\infty} \; \|f^{(3)}\|^{\delta}_{\infty} \;
\frac{1}{N^{1+\frac{\delta}{2}}}
\left(\mathbb{E}^Y[|U_j|^{2+\delta}] +
\mathbb{E}[|\mathcal{N}_j|^{2+\delta}]\right).
\end{array}
\end{equation}

Since $\{\mathcal{N}_j\}$ is a sequence of gaussian r.v.
\begin{eqnarray*}
\mathbb{E}[|\mathcal{N}_j|^{2+\delta}] & \leq &
3^{\frac{2+\delta}{4}} \mathbb{E}^Y[|U_j|^2]^{\frac{2+\delta}{2}}.
\end{eqnarray*}
On the other hand, applying Jensen's and Minkowski's inequalities, we get
\begin{equation*}
\mathbb{E}^Y[|U_j|^2]^{\frac{2+\delta}{2}} \leq
\mathbb{E}^Y[|U_j|^{2+\delta}] \leq \left( \sum_{i \in I_j}
\mathbb{E}^Y[|Z^i_t(\y^i)|^{2+\delta}]^{\frac{1}{2+\delta}}\right)^{2+\delta}.
\end{equation*}
From condition~K$2_{\delta}$ and applying the SLLN it comes
$$\mathbb{E}[|U_j|^{2+\delta}]+
\mathbb{E}^Y[|\mathcal{N}_j|^{2+\delta}]= \mathcal{O}(p^{2+
\delta}).$$ So, from \eqref{ec.6.5.6} and \eqref{ec.6.5.7} the
result holds.

\end{proof}
\vspace{0.3cm}

Let us assume that there exists $\alpha, \beta$ with $0 < \beta
<\alpha <1$ such that for $N \in \mathbb{N}$
$$p(N)=N^{\alpha}, \quad \mbox{and }\quad q(N)=N^{\beta}.$$
In the case of $(\lambda,Y)$-weak dependence: from Lemmas
\ref{lemma.6.5.3}, \ref{lemma.6.5.4}, \ref{lemma.6.5.5} and
\ref{lemma.6.5.6} we obtain that
\begin{eqnarray*}
\Delta_{1,N} &=& \mathcal{O}\left( N^{\frac{\beta - \alpha}{2}} + N^{\frac{\alpha - 1}{2}}\right),\\
\Delta_{2,N}& =& \mathcal{O}\left( N^{-\alpha - \lambda \beta +
\frac{3}{2}} + N^{-\lambda \beta +\frac{1}{2}}
                                + N^{-\lambda \beta + 1}\right),\\
\Delta_{3,N}& =& \mathcal{O}\left( N^{\alpha(1+\delta) - \frac{\delta}{2} } \right),\\
\Delta_{4,N} &=& o\left( 1 \right).
\end{eqnarray*}
Then, there exists $\theta>0$ such that $|\Delta_N|\leq N^{-\theta}$
if and only if
\begin{eqnarray} \label{ec.6.5.8}
0 < \beta < \alpha < \frac{\delta}{2(1+\delta)} \quad \text{and}
\quad \frac{3}{2} - \lambda \beta < \alpha.
\end{eqnarray}

We can to determine the values of $ \alpha$ and $\beta$ so that
condition~\eqref{ec.6.5.8} holds, whenever $\lambda> 2 +
\frac{3}{\delta}$. Moreover for the case of $(\kappa,2,Y)$-weak
dependence we readily obtain that there exists $\theta>0$ such that
$|\Delta_N|\leq N^{-\theta}$ whenever $\kappa> 2+ \frac{2}{\delta}$.

This allows to prove that, for all $t$ fixed, $\{X^N_t(Y)\}$ converge in
distribution to a mean zero Gaussian random variable with variance
$\Gamma(0)$.

Now, let us consider $\hat{X}^N(Y)= \sum_{k=1}^d \alpha_{k}
X^N_{t_k}(Y) $ with $(t_1,\ldots, t_d)\in \mathbb{Z}^d$ and
$(\alpha_{1}, \ldots , \alpha_{d}) \in \mathbb{R}^d $. We can
write $\hat{X}^N$ as
$$\hat{X}^N(Y)= \frac{1}{\sqrt{N}} \sum_{i=1}^N \hat{Z}^i,$$
where $\hat{Z}^i= \sum_{k=1}^d \alpha_{k} Z^i_{t_k}$.

Using a similar technique to the one applied in the proof of
Theorem~\ref{theo.6.3.1}, we can verify easily that if $Z= \{Z^i\}$ is
a $(\epsilon,Y)$-weakly dependent doubly stochastic processes then
$\{\hat{Z}^i\}$ is also $(\epsilon,Y)$-weakly dependent. Moreover,
condition K$2_{\delta}$, K4 and K5 are satisfied for
$\{\hat{Z}^i\}$. Furthermore, the variance $\sigma_N^2(Y)$ of
$\hat{X}^N(Y)$ is such that
$$
\sigma_N^2(Y)=  \sum_{k,l=1}^{d} \alpha_{k}
\Gamma^N(t_k-t_l) \alpha_{l} \xrightarrow{\quad \nu-a.s. \quad}
\sum_{k,l=1}^{d} \alpha_{k} \Gamma(t_k-t_l)
\alpha_{l}= \sigma^2.
$$

Then, in the same way as before we show that $\hat{X}^N(Y)$ converge weakly,
$\nu-a.s.$, to a centered Gaussian random variable
with variance $\sigma^2$. Therefore, we obtain that the process
$X^N(Y)=\{X^N_t(Y): \, t \in\mathbb{Z}\}$ converges in distribution,
$\nu-a.s.$,  to a centered Gaussian process $X= \{X_t: \, t
\in\mathbb{Z}\}$ with covariance function $\Gamma$.

\subsection{Proof of Lemma~\ref{lem.6.5.0}}

\hspace{0.4cm} We prove, under weak dependence property and condition E$2_{\delta}$, that
there exist $d>1$ such that $|\chi(r)|\preceq O(r^{-d})$, so $\chi$
is a weak interaction in $\ell^1$.
\\

\begin{proof}\qquad Let $T>1$, we define $f_T(z)=(z\vee -T) \wedge T$, $z\in \mathbb{R}$,  $lip(f_T)=1$ and
$\|f_T\|_{\infty}=T$. By Lemma~\ref{lemma.6.5.1},  for $k \, \in
\mathbb{N}$, we have
$$\mathbb{E}[|\e^i_t - f_T(\e^i_t)|^k]\leq   2\mathbb{E}[|\e^i_t|^{2+\delta}]T^{-(2+\delta-k)}.$$
Then if $\{\e^i_t \}$ is $\lambda$-weakly dependent, following the
proof of Lemma~\ref{lemma.6.5.2}, we have
\begin{equation*}
\begin{array}{lll}
|\chi(i-j)| &\leq&    |cov(\e^i_t - f_T(\e^i_t) ,\, \e^j_t )|    \\
            &    & +  |cov(f_T(\e^i_t) ,\, \e^j_t - f_T(\e^j_t))|\\
            &    & +  |cov(f_T(\e^i_t) ,\, f_T(\e^j_t))|         \\
            &\leq& 6\mathbb{E}[|\e^i_t|^{2+\delta}]T^{-\delta}+(2T+1)\lambda(i-j)\\
            &\leq& (6 \vee \mathbb{E}[|\e^i_t|^{2+\delta}])(T^{-\delta}+ T\lambda(i-j)).
\end{array}
\end{equation*}
Finally, taking $T= \lambda(i-j)^{\frac{1}{1+\delta}}$, we obtain
that $|\chi(i-j)|\preceq O(\lambda(i-j)^{\frac{\delta}{1+\delta}})$.

Otherwise in the case of $\kappa$-weakly dependent, we obtain
\begin{equation*}
|\chi(i-j)| \leq 6\mathbb{E}[|\e^i_t|^{2+\delta}]T^{-\delta}+
\kappa(i-j).
\end{equation*}
Then, by choosing $T$ such that $ \kappa(i-j)=
6\mathbb{E}[|\e^i_t|^{2+\delta}]T^{-\delta}$, it follows
$|\chi(i-j)|\preceq O(\kappa(i-j))$.

\end{proof}


\subsection{Proof of the SLLN for $\Gamma^N(Y)$: case of $DSV^*$ processes.}

\hspace{0.4cm} The following proof goes to the same lines of the one given in
\cite{Dacunha&Fermin.L.Notes} for SLLN in the case of linear processes.

\begin{proof} \qquad The proof of theorem will be presented in three parts.

\quad\\
\textbf{Part 1:} Since $\chi \in
\ell_1$, then $\frac{[\chi^k]_{N,1}}{N}\leq \|\chi\|_1$ and
$\frac{[\chi^k]_{N,1}}{N}$ converge to $s_k$, for each $k$, with
$|s_k| \leq \|\chi\|_1$. On the other hand, condition~C2 implies
$\sum_{k=1}^\infty \phi_k(\tau)< \infty$. Then, for all $\tau \in
\mathbb{Z}$, we have
$$\sum_{k=1}^\infty \phi_k(\tau)\frac{[\chi^k]_{N,1}}{N} \limiteN  \sum_{k=1}^\infty \phi_k(\tau) s_k < \infty.$$
Therefore,
$$R_N(\tau):=\mathbb{E}[\Gamma^N(\tau,Y)]= \sum_{k=1}^\infty
\gamma_k(\tau) + \sum_{k=1}^\infty
\phi_k(\tau)\frac{[\chi^k]_{N,1}}{N} \limiteN \Gamma(\tau).$$
For any values of sequence $\{s_k\}$, $R_N(\tau)$ has a non-zero
limit.

\quad\\
\textbf{ Part 2:}  Let $M_{\tau,k}(\y^i) =
\mathbb{E}^i[\Psi_{\tau,k}(\y^i,\y^j)]$, for $i \neq j$, where
$\mathbb{E}^i[\cdot]$ is the conditional expectation with respect to
$\y^i$. Then, $\{M_{\tau,k}(\y^i): \, i \in \mathbb{N}\}$ is an i.i.d.
sequence with $\mathbb{E}[M_{\tau,k}(\y^i)]= \phi_k(\tau)$.

Let $\mathbb{H}$ be the Hilbert space generated by
$\{\Psi_{\tau,k}(\y^i,\y^j): \, k \in \mathbb{N}, \, 1\leq i \neq j
\leq N\}$, $\mathbb{H}_1$ the linear space generated by
$\{\Psi_{\tau,k}(\y^i,\y^j)-M_{\tau,k}(\y^i)-M_{\tau,k}(\y^j)+\phi_k(\tau):\,
k \in \mathbb{N}, \, 1\leq i<j \leq N\}$, $\mathbb{H}_2$ the linear
space generated by $\{M_{\tau,k}(\y^i)+M_{\tau,k}(\y^j)-2\phi_k(\tau):
\, k \in \mathbb{N}, \, 1\leq i<j \leq N\}$ and $\mathbb{C}$ the
space of constants, then $\mathbb{H}_1$, $\mathbb{H}_2$,
$C$ form an orthogonal decomposition of $\mathbb{H}$; i.e.
$\mathbb{H}=\mathbb{H}_1 \bigoplus\mathbb{H}_2 \bigoplus C$.
This can be checked by realizing that
$\mathbb{E}^i[\Psi_{\tau,k}(\y^i,\y^j)]=0$ $\mu-a.s.$ for $i \neq j$.
We define
\begin{eqnarray*}
T_N(\tau,Y)& = & \frac{1}{N}\sum_{i=1}^N
\sum_{k=1}^{\infty}\left(\Psi_{\tau,k}(\y^i,\y^i)-\gamma_k(\tau)\right)
\; ,\\
Q_N(\tau,Y)& = & \frac{1}{N}\sum_{1\leq i\neq j\leq N}
\sum_{k=1}^{\infty}\left(M_{\tau,k}(\y^i)+M_{\tau,k}(\y^j)-2\phi_k(\tau)\right)\chi^k(i-j)
\; ,\\
U_N(\tau,Y)& = & \frac{1}{N}\sum_{1\leq i\neq j\leq N}
\sum_{k=1}^{\infty}\left(\Psi_{\tau,k}(\y^i,\y^j)-M_{\tau,k}(\y^i)-M_{\tau,k}(\y^j)+\phi_k(\tau)\right)\chi^k(i-j)
\; .
\end{eqnarray*}
$\mathbb{H}$'s orthogonal decomposition applied to
$\Gamma^N(\tau,Y)-T_N(\tau,Y)$ gives the following orthogonal
decomposition
$$\Gamma^N(\tau,Y)-T_N(\tau,Y)= R_N(\tau,Y) +
Q_N(\tau,Y) + U_N(\tau,Y) \; .$$

In what follows, we will show that $T_N(\tau,Y)$, $Q_N(\tau,Y)$ and
$U_N(\tau,Y)$ converge to zero $\nu-a.s$ and in $L^1(\nu)$ under
some given conditions. If the limits $\nu-a.s$ and in $L^1(\nu)$ of
$\Gamma^N$ exist, then they must be the same.

\quad\\
\textbf{ Step 1:} ($T_N$'s convergence to zero).\\
Under condition~C2 we have
$$\mathbb{E}\left[\left|\sum_{k=1}^\infty \Psi_{\tau, k}(\y^i,
\y^i)\right|\right]\leq \mathbb{E}\left[\|c(\y^i)\|^2_2 \right] <
\infty.
$$
Then,, for each $\tau \in \mathbb{Z}$, the SLLN implies
that $T_N(\tau,Y)$ converges in $L^1(\nu)$ and $\nu-a.s.$ to zero.

\quad\\
\textbf{Step 2:} ($Q_N$'s convergence to zero).\\
We can write
$$Q_N(\tau,Y)= \frac{2}{N}\sum_{i=1}^{N} \sum_{k=1}^\infty \left(M_{\tau,k}(\y^i)-\phi_k(\tau)\right)\left(s_{N+1-i}(k)+s_{i}(k)\right),$$
So that $\mathbb{E}[Q_N(\tau,Y)] = 0$ and
\begin{eqnarray*}
\mathbb{E}[|Q_N(\tau,Y)|^2] & = &  \frac{\Delta_N}{N^2} .
\end{eqnarray*}
where
\begin{eqnarray*}
\Delta_N &=& 4\sum_{i=1}^N \sum_{k,l =1}^{\infty} cov\left(M_{\tau,k}(\y^i), \,
M_{\tau,l}(\y^i)\right) \left(s_{N-i}(k)+s_{i}(k)\right)\left(s_{N-i}(l)+s_{i}(l)\right)\\
&\leq & 4\sum_{i=1}^N \sum_{k,l =1}^{\infty} A_{\tau,k}
A_{\tau,l}\left|s_{N-i}(k)+s_{i}(k)\right|\left|s_{N-i}(l)+s_{i}(l)\right|,
\end{eqnarray*}
with $ A_{\tau,k} = \mathbb{E}\left[|M_{\tau,k}(\y^i)-\phi_k(\tau)|^2\right]^{\frac{1}{2}}$.
Furthermore,
\begin{eqnarray*}
\left|s_{N-i}(l)+s_{i}(l)\right| & \leq &
\sum_{j=1}^{N-i-1}|\chi^l(j)| + \sum_{j=1}^{i}|\chi^l(j)| \leq 2
\|\chi^l\|_1 \leq 2 \|\chi\|_1\\
\frac{1}{N^2}\sum_{i=1}^N \left|s_{N-i}(k)+s_{i}(k)\right| &
\preceq& \frac{[|\chi|]_{N,1}}{N^2}.
\end{eqnarray*}
Moreover, from condition~C2
\begin{eqnarray*}
\sum_{k=1}^{\infty} A_{\tau,k} & \leq & \sum_{k=1}^{\infty}
\mathbb{E} \left[|\Psi_{\tau,k}(\y^i, \y^j) -
\phi_k(\tau)|^2\right]^{\frac{1}{2}}\\
& \leq &  \sum_{k=1}^{\infty} \mathbb{E} \left[|\Psi_{\tau,k}(\y^i,
\y^j)|^2\right]^{\frac{1}{2}}\\
& \leq & \sum_{k=0}^{\infty} \sum_{l_1<\ldots<l_k}
\mathbb{E}[|c_{k;l_1,\ldots,l_k}(\y)|^2]^{\frac{1}{2}}
\mathbb{E}[|c_{k;l_1+\tau,\ldots,l_k+\tau}(\y)|^2]^{\frac{1}{2}}\\
& \leq & \mathbb{E}\left[ \|c(\y)\|^2_2 \right] < \infty.
\end{eqnarray*}
Therefore,
\begin{equation}\label{ec.6.5.8.1}
\Delta_N  \preceq 8 \|\chi\|_1\left(\sum_{k=1}^{\infty} A_{\tau,k}\right)^2 [|\chi|]_{N,1}\quad \text{ and }\quad  \mathbb{E}[|Q_N(\tau,Y)|^2]  \preceq \mathcal{O}\left(\frac{[|\chi|]_{N,1}}{N^2}\right).
\end{equation}
Thus, we obtain that $\sum_N \mathbb{E}[Q_N(\tau,Y)]^2 < \infty$. Since  $[|\chi|]_{N,1}= \mathcal{O}(N)$ then $Q_N(\tau,Y)$ converges to zero in $L^2(\nu)$.

We have two cases:
\begin{itemize}
\item Case 1:  $\Delta_N$ converges to a finite limit,  $\Delta_N= \mathcal{O}(1)$.

\item Case 2:  $\Delta_N$ converges to $+\infty$.
\end{itemize}

In the case 1, we have $\mathbb{E}[|Q_N(\tau,Y)|^2] = \mathcal{O}(N^{-2})$, so from Borel-Cantelli's lemma we derive the $\nu-a.s.$ convergence to zero of $Q_N(\tau,Y)$.

In the case 2, to prove $\lim_{N\rightarrow \infty} Q_N(\tau,Y)=0$
$\nu-a.s.$, we can apply Petrov's Theorem (\cite{Petrov},6.17 p
222): let $Q^*_N=\sum_{i=1}^N \xi^i$ be a sum of independent centered
random variables such that its variance $\Delta_N$ diverge
to infinity. Then $Q^*_N =o(\sqrt{\Delta_N\Theta(\Delta_N)})$ for all
function $\Theta$ such that $\sum\frac{1}{n\Theta(n)}<\infty$.

In our case, $Q^N= Q^*_N/ N$, so it is sufficient to find $\Theta$ such that
$$\Delta_N \Theta(\Delta_N)= \mathcal{O}(N^2).$$
We have from \eqref{ec.6.5.8.1} that $\Delta_N = \mathcal{O}([|\chi|]_{N,1})$ and $[|\chi|]_{N,1})=\mathcal{O}(N)$. Then, taking $\Theta(n)=n$ we obtain
$$\Delta_N \Theta(\Delta_N)= \Delta_N^2 = \mathcal{O}([|\chi|]^2_{N,1})= \mathcal{O}(N^2).$$
Consequently $Q_N(\tau,Y)$ converges $\nu-a.s.$ to zero.

\quad\\
\textbf{Step 3:} ($U_N$'s convergence to zero).\\
We consider the kernel defined, for $i\neq j$, by
\begin{eqnarray*}
\Phi_{\tau}(\y^i,\y^j)& = & \sum_{k=1}^{\infty}\Phi_{\tau,k}(\y^i,\y^j)\chi^k(i-j),
\end{eqnarray*}
where
\begin{eqnarray*}
\Phi_{\tau,k}(\y^i,\y^j)& = &
\Psi_{\tau,k}(\y^i,\y^j)-M_{\tau,k}(\y^i)-M_{\tau,k}(\y^j)+\phi_k(\tau)
\; .
\end{eqnarray*}
This kernel is symmetric and degenerated, i.e.,
$\mathbb{E}^j[\Phi_{\tau}(\y^i,\y^j)]=0$  $\mu-a.s.$ Whence we
have that $\mathbb{E}[\Phi_{\tau}(\y^i,\y^j)]=0$,
$\mathbb{E}[\Phi_{\tau}(\y^i,\y^j)\Phi_{\tau}(\y^m,\y^n)]=0$ for
$(i,j)\neq (m,n)$.

In the same way as we bound $\sum_k A_{\tau,k}$ in the Step 2, we
can verify that condition~C2 implies
$$\sigma_{\tau}=\sum_{k=1}^{\infty} \mathbb{E}[|\Phi_{\tau,k}(\y^i,\y^j)|^2]^{\frac{1}{2}} < \infty .$$

Then writing
$$U_N(\tau,Y)= \frac{1}{N} \sum_{1\leq i \neq j \leq N} \Phi_{\tau}(\y^i,\y^j),$$
we obtain
\begin{eqnarray*}\label{ec.6.5.8.2}
\mathbb{E}[|U_N(\tau,Y)|^2]& = & \frac{1}{N^2} \sum_{1\leq i \neq j
\leq N} \sum_{k,l =1}^{\infty}
\mathbb{E}[\Phi_{\tau,k}(\y^i,\y^j)\Phi_{\tau,l}(\y^i,\y^j)] \chi(i-j)^k
\chi(i-j)^l\\
& \leq & \frac{1}{N^2} \sum_{1\leq i \neq j \leq N} \sum_{k,l
=1}^{\infty} \mathbb{E}[|\Phi_{\tau,k}(\y^i,\y^j)|^2]^{\frac{1}{2}}
\mathbb{E}[|\Phi_{\tau,l}(\y^i,\y^j)|^2]^{\frac{1}{2}} |\chi^k(i-j)||\chi^l(i-j)|\\
& \leq &\left(\sum_{l =1}^{\infty}
\mathbb{E}[|\Phi_{\tau,l}(\y^i,\y^j)|^2]^{\frac{1}{2}}
\left(\frac{1}{N^2}[\chi^{2k}]_{N,1}\right)^{\frac{1}{2}} \right)^2\\
& \preceq & \sigma^2_{\tau} \frac{[|\chi|]_{N,1}}{N^2}.
\end{eqnarray*}
Hence $\sum_N \mathbb{E}[U_N(\tau,Y)]^2 < \infty$. Since
$[|\chi|]_{N,1}= \mathcal{O}(N)$ then $Q_N(\tau,Y)$ converges to
zero in $L^2(\nu)$.

Let us  now prove  the $\nu-a.s.$ convergence of $U_N$. We prove the $\nu-a.s.$ convergence of $U_N$ to zero, following the scheme of the classical proof for the SLLN in
the case of i.i.d., \cite{Petrov}. See \cite{Surgailis}
for the Central Limit Theorem.

Let $a>0$,
\begin{eqnarray*}
\mathbb{P}\left(\max_{n\geq  N}|U_n(\tau,Y)|   \geq  2a\right)  &
\leq & \!\! \sum_{k=\lfloor\sqrt{N}\rfloor}^{\infty}
\!\!\mathbb{P}\left(|U_{k^2}(\tau,Y)|    \geq    a \right)\\
& +    & \!\! \sum_{k=\lfloor\sqrt{N}\rfloor}^{\infty}      \!\!
\mathbb{P}\!\! \left(\max_{k^2\leq n \leq (k+1)^2}\!\!
|U_n(\tau,Y)-U_{k^2}(\tau,Y)| \geq a\right) \!.
\end{eqnarray*}
From estimation of $\E[|U_N(\tau,Y)|^2]$ and   applying Tchebychev's
inequality, we have
$$\mathbb{P}(|U_{k^2}(\tau,Y)|          \geq          a)          \leq
\frac{\sigma^2_{\tau}[|\chi|]_{k^2,2}}{a^2 k^4} \leq
\frac{\sigma^2_{\tau}r_{k^2}}{a^2 k^2}\; ,$$
where $r_n= \sum_{j=1}^{n}|\chi(j)|< \|\chi\|_{\ell_1}<\infty$. Then,
$$\sum_{k=1}^{\infty} \frac{r_{k^2}}{k^2} \leq \sum_{k=1}^{\infty} \frac{\|\chi\|_{\ell_1}}{k^2}< \infty.$$
So, the series $\sum_{k\!=\!\lfloor\sqrt{N}\rfloor}^{\infty}\!\!
\mathbb{P}(|U_{k^2}(\tau,Y)|\!\! \geq \!\! a)$ converges.

On the other hand,
$$U_n(\tau,Y)-U_{k^2}(\tau,Y)= \frac{1}{n } \sum_{A(n,k^2)}\Phi_{\tau}(\y^i,\y^j)
+ \left(\frac{1}{n}- \frac{1}{k^2}\right) \sum_{1\leq i\neq j \leq k^2} \Phi_{\tau}(\y^i,\y^j) \; ,$$
where $A(n,k^2)=\{i,j: 1\leq i< j, k^2  < j \leq n\} \cup \{i,j:
1\leq j < i, k^2 <i \leq n \}$. So
\begin{eqnarray*}
\max_{k^2\leq  n \leq  (k+1)^2}|U_n(\tau,Y)-U_{k^2}(\tau,Y)| &  \leq
& a_k + b_k \; ,
\end{eqnarray*}
with
\begin{eqnarray*}
a_k  & = & \max_{k^2\leq n \leq
(k+1)^2}\left|\frac{1}{n}\sum_{A(n,k^2)} \Phi_{\tau}(\y^i,\y^j)\right|
\;  ,\\
b_k  & = & \max_{k^2\leq  n \leq  (k+1)^2}\left| \frac{1}{n}-
\frac{1}{k^2}\right| \left|\sum_{1\leq i<j\leq
k^2} \Phi_{\tau}(\y^i,\y^j) \right| \; .
\end{eqnarray*}
Since $a_k \leq
\frac{1}{k^2}\sum_{A\left(k^2,(k+1)^2\right)}|\Phi_{\tau}(\y^i,\y^j)|$, then
\begin{eqnarray*}
\mathbb{P}(a_k    \geq    a)    &    \leq    &    \frac{1}{a^2 k^4}
\mathbb{E}\left[\left( \sum_{A\left(k^2,(k+1)^2\right)}|
\Psi_{\tau}(\y^i,\y^j)|\right)^2\right]    \\   &
\leq   & \frac{\sigma^2_{\tau} }{a^2 k^4} \left(
\sum_{A\left(k^2,(k+1)^2\right)}|\chi(i-j)| \right)^2 \\
 &    \leq    & \frac{\sigma^2_{\tau} }{a^2 k^4} \left( (k+1)^2 r_{(k+1)^2} - k^2
r_{k^2} \right)^2 \; .
\end{eqnarray*}
Since $\chi$ is a stationary interaction it holds
\begin{eqnarray*}
(k+1)^2 r_{(k+1)^2} - k^2 r_{k^2} & = & \sum_{j=k^2+1}^{(k+1)^2}
\left((k+1)^2 -j\right)|\chi(j)| + \sum_{j=1}^{k^2}\left((k+1)^2 -
k^2\right)|\chi(j)|\\
& \leq & (2k+1) \left(\sum_{j=1}^{(k+1)^2}|\chi(j)|\right)\\
& \leq & 2(k+1)r_{(k+1)^2}.
\end{eqnarray*}
So that
\begin{eqnarray*}
\mathbb{P}(a_k    \geq    a) & \leq & \frac{4\sigma^2_{\tau}(k+1)^2 r^2_{(k+1)^2}}{a^2 k^4}\\
&\leq & \frac{4\sigma^2_{\tau}\|\chi\|^2_{\ell_1} (k+1)^2 }{a^2 k^4}.
\end{eqnarray*}

In the same way, $b_k    \leq   \frac{2}{k^3}\left|\sum_{1\leq
i<j\leq k^2}\Phi_{\tau}(\y^i,\y^j) \right|$ so
$$\mathbb{P}(b_k               \geq               a)              \leq
\frac{4\sigma_{\tau}k^2[|\chi|]_{k^2,2}}{a^2
k^6}\leq \frac{4\sigma_{\tau}\|\chi\|^2_{\ell_1} }{a^2 k^2} \; .$$
Since           $\mathbb{P}(\max_{k^2\leq             n \leq
(k+1)^2}|U_n(\tau,Y)-U_{k^2}(\tau,Y)| \geq a) \leq \mathbb{P}(a_k
\geq a) + \mathbb{P}(b_k \geq a) $ then
\begin{eqnarray*}
\sum_{k=\lfloor\sqrt{N}\rfloor}^{\infty}   \mathbb{P}\left(\max_{k^2\leq
n \leq (k+1)^2}|U_n(\tau,Y)-U_{k^2}(\tau,Y)| \geq a\right) < \infty \; .
\end{eqnarray*}
Finally,  from   Borel-Cantelli's  lemma  we   derive  the
$\nu-a.s.$ convergence to zero of $U_N(\tau,Y)$. This proves the
convergence $\nu-a.s.$

\quad\\
\textbf{Part 3:} ($\Gamma^N$'s convergence). \\
We have proved, in Part 2, that $T_N$, $Q_N$ and $U_N$ converge
$\nu-a.s.$ and in $L_1(\nu)$  to zero. Then, following Part 3 in the proof of Theorem 1 in \cite{Dacunha&Fermin.L.Notes}, we obtain from orthogonal decomposition
$$\Gamma^N(\tau,Y)-R_N(\tau)= T_N(\tau,Y) + Q_N(\tau,Y) + U_N(\tau,Y),$$
that $\Gamma^N(\tau,Y)$ converge in $L^1(\nu)$ and $\nu-a.s.$ to $\Gamma(\tau)$.

\end{proof}


\bibliographystyle{acm}
\bibliography{biblio_ljfnew}

\begin{thebibliography}{10}

\bibitem{Dacunha&Fermin.L.Notes}
{\sc Dacunha-Castelle, D., and Ferm\'{\i}n, L.}
\newblock {Aggregation of Doubly Stochastic Interactive Gaussian Processes and
  Toeplitz forms of $U$-Statistics}.
\newblock {\em {In Dependence in Probability and Statistics, Series: Lecture
  Notes in Statistic.} 187\/} (2006).

\bibitem{Dacunha&Fermin.aggreglinear}
{\sc Dacunha-Castelle, D., and Ferm\'{\i}n, L.}
\newblock {Aggregation of doubly stochastic linear processes}.
\newblock {\em {Preprint.}\/} (2008).

\bibitem{WDB-PDoukhan}
{\sc Dedecker, J., Doukhan, P., Lang, G., Leon, J., Louhichi, S., and Prieur,
  C.}
\newblock {\em {Weak dependence. With examples and applications.}}
\newblock Lecture Notes in Statistics 190. Springer, New York, 2006.

\bibitem{Ding&Granger}
{\sc Ding, Z., and Granger, C.}
\newblock {Modeling volatility persistence of speculative returns: a new
  approach}.
\newblock {\em {J. Econometrics} 52\/} (1996), 185--215.

\bibitem{Doukhan.LRM}
{\sc Doukhan, P.}
\newblock {Models inequalities and limit theorems for stationary sequences}.
\newblock {\em {in Theory and applications of long range dependence (Doukhan et
  al eds.) Birkh\"auser}\/} (2002), 43--101.

\bibitem{Doukhan&Louhichi}
{\sc Doukhan, P., and Louhichi, S.}
\newblock {A new weak dependence condition and applications to moment
  inequaliies }.
\newblock {\em {Stoch. Proc. Appl.} 84\/} (1999), 313--342.

\bibitem{Doukhan&Teyssiere&Winant2006}
{\sc Doukhan, P., Teyssière, G., and Winant, P.}
\newblock {Vector valued ARCH infinity processes. }.
\newblock {\em {In Dependence in Probability and Statistics, Series: Lecture
  Notes in Statistic, Bertail P., Doukhan P. and Soulier P. Eds, Springer, New
  York} 187\/} (2006).

\bibitem{Doukhan.et.al2006}
{\sc Doukhan, P., and Wintenberger, O.}
\newblock {A LARCH infinity vector value processes. }.
\newblock {\em {In Dependence in Probability and Statistics, Series: Lecture
  Notes in Statistic, Bertail P., Doukhan P. and Soulier P. Eds, Springer, New
  York} 187\/} (2006), 245--258.

\bibitem{Doukhan&Wintenberger}
{\sc Doukhan, P., and Wintenberger, O.}
\newblock {An invariance principle for weakly dependent stationary general
  models}.
\newblock {\em {Probab. Math. Stat.} 27}, 1 (2007), 45--73.

\bibitem{Giraitis}
{\sc Giraitis, L., Kokoszka, P., and Leipus, R.}
\newblock {Stationary ARCH models: dependence structure and Central Limit
  Theorem}.
\newblock {\em { Econometric Theory } 16\/} (2000), 3--22.

\bibitem{Giraitis&Surgailis}
{\sc Giraitis, L., and Surgailis}.
\newblock {ARCH-types bilinear models with doubly long memory}.
\newblock {\em { Stochastic Process and their Application } 100\/} (2002),
  275--300.

\bibitem{Goncalvez1988}
{\sc Gon\c{c}alvez, E., and Gourieroux, C.}
\newblock {Agr\'egation de processus autorégressifs d'ordre 1}.
\newblock {\em { Annales d'Economie et de Statistique } 12\/} (1988), 127--149.

\bibitem{Granger}
{\sc Granger, C.}
\newblock {Long Memory relationships and the aggregate of dinamic models}.
\newblock {\em {Journal of Econometrics} 14\/} (1980), 227--238.

\bibitem{Kazakevi&Leipus}
{\sc Kazakevi$\check{c}$ius, V., and Leipus, R.}
\newblock {On the stationary in the $ARCH(\infty)$ model}.
\newblock {\em { Econometric Theory } 18\/} (2002), 1--16.

\bibitem{Kazakevi}
{\sc Kazakevi$\check{c}$ius, V., Leipus, R., and M.C., V.}
\newblock {Stability of random coefficient ARCH models and aggregation
  schemes}.
\newblock {\em { J. of Econometrics} 120\/} (2004), 139--158.

\bibitem{Kokoszka&Leipus}
{\sc Kokoszka, P., and Leipus, R.}
\newblock {Change-point estimation in ARCH models}.
\newblock {\em {Bernoulli.} 6\/} (2000), 513--539.

\bibitem{Leipus&Viano2000}
{\sc Leipus, R., and Viano, M.-C.}
\newblock {Modelling long memory time series with finite or infinite variance:
  a general approach}.
\newblock {\em {Journal of Time Series Analysis} 21}, {1} (2000), 61--67.

\bibitem{Leipus&Viano2002}
{\sc Leipus, R., and Viano, M.-C.}
\newblock {Aggregation in ARCH models}.
\newblock {\em {Liet. math. rink.} 42}, {1} (2002), 68--89.

\bibitem{Linden}
{\sc Linden, M.}
\newblock {Time series properties of aggregated AR(1) processes with uniformly
  distributed coefficients }.
\newblock {\em {Economics Letters} 64\/} (1999), 31--36.

\bibitem{Lippi&Zaffaroni}
{\sc Lippi, M., and Zaffaroni, P.}
\newblock {Aggegation of simple linear dynamics: exact asymptotic results}.
\newblock {\em {Econometrics Discussion Paper 350, STICERD-LSE }\/}.

\bibitem{Louhichi2000}
{\sc Louhichi, S.}
\newblock {Weak convergence for empirical processes of associated sequences}.
\newblock {\em {Ann. Inst. Henri Poincare} 36}, {5} (2000), 547--567.

\bibitem{Neveu}
{\sc Neveu, J.}
\newblock {\em {Processus al\'eatoires gaussiens.}}
\newblock {Seminaire de mathematiques superieures. 34. Montreal, Canada: Les
  Presses de l'Universite de Montreal}, 1968.

\bibitem{Nijman&Sentana}
{\sc Nijman, T., and Sentana, E.}
\newblock {Marginalization and contemporaneous aggregation in multivariate
  GARCH processes}.
\newblock {\em {Journal of Econometrics } 71\/} (1996), 71--87.

\bibitem{Petrov}
{\sc Petrov, V.}
\newblock {\em {Limit Thorems of Probability Theory: Sequences of Independent
  Random Variables}}.
\newblock Oxford Studies in Probability, vol. 4, Oxford Science Publications,
  Clarendon Press, Oxford, 1995.

\bibitem{RobinsonARCH}
{\sc Robinson, P.}
\newblock { Testing for strong serial correlation and dynamic conditional
  heteroscedasticity in multiple regresion}.
\newblock {\em {J. Econometrics} 47\/} (1991), 67--84.

\bibitem{Schreiber}
{\sc Schreiber, M.}
\newblock {Fermeture en probabilit\'e de certains sous-espaces d'un espace
  $L\sp2$. Application aux chaos de Wiener.}
\newblock {\em Z. Wahrscheinlichkeitstheor. Verw. Geb. 14\/} (1969), 36--48.

\bibitem{Surgailis}
{\sc Surgailis, D.}
\newblock {Non CLT's: U-statistics multinomial formula and approximations of
  multiple Ito-Winer integrals}.
\newblock {\em {In Theory and applications of long range dependence (Doukhan et
  al eds.) Birkh\"auser, Boston}\/} (2003), {130--142}.

\bibitem{Terence1}
{\sc Terence, T., and to~W., K.}
\newblock {Time series properties of aggregated AR(2) processes}.
\newblock {\em {Economics Letters} 73\/} (2001), 325--332.

\bibitem{Zaffaroni}
{\sc {Zaffaroni, P.}}
\newblock {Aggregation and memory of models of changing volatility}.
\newblock {\em {Journal of Econometrics} {127}}, {1} (2007), {237--249}.

\end{thebibliography}

\end{document}